\newcommand{\Title}[1]{\bigskip\bigskip\centerline{\bf #1}\bigskip}
\newcommand{\Author}[1]{\medskip\centerline{ \it #1}}
\newcommand{\Affiliation}[1]{\medskip\centerline{#1}}
\newcommand{\Email}[1]{\medskip\centerline{#1}\bigskip}
\begin{document}

\newcommand{\N}{\mbox {$\mathbb N $}}
\newcommand{\Z}{\mbox {$\mathbb Z $}}
\newcommand{\Q}{\mbox {$\mathbb Q $}}
\newcommand{\R}{\mbox {$\mathbb R $}}
\newcommand{\lo }{\longrightarrow }
\newcommand{\ul}{\underleftarrow }
\newcommand{\rl}{\underrightarrow }
\newcommand{\rs }{\rightsquigarrow }
\newcommand{\ra }{\rightarrow }
\newcommand{\dd }{\rightsquigarrow }
\newcommand{\ol }{\overline }
\newcommand{\la }{\langle }
\newcommand{\tr }{\triangle }
\newcommand{\xr }{\xrightarrow }
\newcommand{\de }{\delta }
\newcommand{\pa }{\partial }
\newcommand{\LR }{\Longleftrightarrow }
\newcommand{\Ri }{\Rightarrow }
\newcommand{\va }{\varphi }
\newcommand{\Den}{{\rm Den}\,}
\newcommand{\Ker}{{\rm Ker}\,}
\newcommand{\Reg}{{\rm Reg}\,}
\newcommand{\Fix}{{\rm Fix}\,}
\newcommand{\Sup}{{\rm Sup}\,}
\newcommand{\Inf}{{\rm Inf}\,}
\newcommand{\Img}{{\rm Im}\,}
\newcommand{\Id}{{\rm Id}\,}
\newcommand{\ord}{{\rm ord}\,}

\newtheorem{theorem}{Theorem}[section]
\newtheorem{lemma}[theorem]{Lemma}
\newtheorem{proposition}[theorem]{Proposition}
\newtheorem{corollary}[theorem]{Corollary}
\newtheorem{definition}[theorem]{Definition}
\newtheorem{example}[theorem]{Example}
\newtheorem{examples}[theorem]{Examples}
\newtheorem{xca}[theorem]{Exercise}
\theoremstyle{remark}
\newtheorem{remark}[theorem]{Remark}
\newtheorem{remarks}[theorem]{Remarks}
\numberwithin{equation}{section}

\def\leftmark{L.C. Ciungu}

\Title{CENTERS OF QUANTUM-WAJSBERG ALGEBRAS} 
\title[Centers of quantum-Wajsberg algebras]{}
                                                                           
\Author{\textbf{LAVINIA CORINA CIUNGU}}
\Affiliation{Department of Mathematics} 
\Affiliation{St Francis College}
\Affiliation{179 Livingston Street, Brooklyn, New York, NY 11201, USA}
\Email{lciungu@sfc.edu}

\begin{abstract} 
We define the Wajsberg-center and the OML-center of a quantum-Wajsberg algebra, and study their structures. 
We prove that the Wajsberg-center is a Wajsberg subalgebra of a quantum-Wajsberg algebra, and that it is a 
distributive sublattice of its corresponding poset.   
If the quantum-Wajsberg algebra is quasi-linear, we show that the Wajsberg-center is a linearly ordered Wajsberg algebra. We also show that the lattice subreduct of the Wajsberg-center is a Kleene algebra. 
Furthermore, we prove that the OML-center is an orthomodular lattice, and that the orthomodular lattices form a 
subvariety of the variety of quantum-Wajsberg algebras. \\

\textbf{Keywords:} {quantum-Wajsberg algebra, Wajsberg-center, OML-center, distributive lattice, orthomodular lattice, 
Kleene algebra} \\
\textbf{AMS classification (2020):} 06F35, 03G25, 06A06, 81P10, 06C15
\end{abstract}

\maketitle

\section{Introduction} 

In the last decades, the study of algebraic structures related to the logical foundations of quantum mechanics 
became a central topic of research. Generally known as quantum structures, these algebras serve as algebraic 
semantics for the classical and non-classical logics, as well as for the quantum logics. 
As algebraic structures connected with quantum logics we mention the following algebras: bounded involutive lattices,  De Morgan algebras, ortholattices, orthomodular lattices, MV algebras, quantum MV algebras. \\
\indent
The quantum-MV algebras (or QMV algebras) were introduced by R. Giuntini in \cite{Giunt1} 
as non-lattice generalizations of MV algebras (\cite{Chang}) and as non-idempotent generalizations of orthomodular lattices (\cite{Beran, Kalm1}). 
These structures were intensively studied by R. Giuntini (\cite{Giunt2, Giunt3, Giunt4, Giunt5, Giunt6}), 
A. Dvure\v censkij and S. Pulmannov\'a (\cite{DvPu}), R. Giuntini and S. Pulmannov\'a (\cite{Giunt7}) and by 
A. Iorgulescu in \cite{Ior30, Ior31, Ior32, Ior33, Ior34, Ior35}. 
An extensive study on the orthomodular structures as quantum logics can be found in \cite{Ptak}. 
Many algebraic semantics for the classical and non-classical logics studied so far (pseudo-effect algebras, residuated lattices, pseudo-MV/BL/MTL algebras, bounded non-commutative R$\ell$-monoids, pseudo-hoops, pseudo-BCK/BCI algebras), as well as their commutative versions, are quantum-B algebras. \\
\indent
Quantum-B algebras, defined and investigated by W. Rump and Y.C. Yang (\cite{Rump2, Rump1}), 
arise from the concept of quantales which was introduced in 1984 as a framework for quantum mechanics 
with a view toward non-commutative logic (\cite{Mulv1}).
Interesting results on quantum-B algebras have been presented in \cite{Rump3, Rump4, Han1, Han2}. \\
\indent
We redefined in \cite{Ciu78} the quantum-MV algebras starting from involutive BE algebras and we introduced and 
studied the notion of quantum-Wajsberg algebras (QW algebras, for short). 
We proved that any Wajsberg algebra is a quantum-Wajsberg algebra, and the commutative quantum-Wajsberg algebras 
are Wajsberg algebras. 
It was also shown that the Wajsberg algebras are both quantum-Wajsberg algebras and commutative quantum-B algebras. \\
\indent
In this paper, we define the Wajsberg-center or the commutative center of a quantum-Wajsberg algebra $X$ as the set of 
those  elements of $X$ that commute with all other elements of $X$.  
We study certain properties of the Wajsberg-center, and we prove that the Wajsberg-center is a Wajsberg 
subalgebra of $X$, and it is also a distributive sublattice of its corresponding poset.   
If the quantum-Wajsberg algebra is quasi-linear, we show that the Wajsberg-center is a linearly ordered 
Wajsberg algebra. We also prove that the lattice subreduct of the Wajsberg-center is a Kleene algebra.  
Furthermore, we define the OML-center of a quantum-Wajsberg algebra, and study its properties. 
We prove that the OML-center is an orthomodular lattice, and that the orthomodular lattices form a subvariety of the 
variety of quantum-Wajsberg algebras.
Additionally, we prove new properties of quantum-Wajsberg algebras.

$\vspace*{1mm}$

\section{Preliminaries}

In this section, we recall some basic notions and results regarding BCK algebras, Wajsberg algebras, BE algebras and quantum-Wajsberg algebras that will be used in the paper. Additionally, we prove new properties of quantum-Wajsberg algebras. For more details regarding the quantum-Wajsberg algebras we refer the reader to \cite{Ciu78}. \\
\indent
Starting from the systems of positive implicational calculus, weak systems of positive implicational calculus 
and BCI and BCK systems, in 1966 Y. Imai and K. Is\`eki introduced the \emph{BCK algebras} (\cite{Imai}). 
BCK algebras are also used in a dual form, with an implication $\ra$ and with one constant element $1$, 
that is the greatest element (\cite{Kim2}). 
A (dual) BCK algebra is an algebra $(X,\ra,1)$ of type $(2,0)$ satisfying the following conditions, 
for all $x,y,z\in X$: 
$(BCK_1)$ $(x\ra y)\ra ((y\ra z)\ra (x\ra z))=1;$ 
$(BCK_2)$ $1\ra x=x;$ 
$(BCK_3)$ $x\ra 1=1;$ 
$(BCK_4)$ $x\ra y=1$ and $y\ra x=1$ imply $x=y$. 
In this paper, we use the dual BCK algebras. 
If $(X,\ra,1)$ is a BCK algebra, for $x,y\in X$ we define the relation $\le$ by $x\le y$ if and only if $x\ra y=1$, 
and $\le$ is a partial order on $X$. \\
\indent
\emph{Wajsberg algebras} were introduced in 1984 by Font, Rodriguez and Torrens in \cite{Font1} as algebraic model 
of $\aleph_0$-valued \L ukasiewicz logic.   
A \emph{Wajsberg algebra} is an algebra $(X,\ra,^*,1)$ of type $(2,1,0)$ satisfying the following conditions 
for all $x,y,z\in X$: 
$(W_1)$ $1\ra x=x;$ 
$(W_2)$ $(y\ra z)\ra ((z\ra x)\ra (y\ra x))=1;$ 
$(W_3)$ $(x\ra y)\ra y=(y\ra x)\ra x;$ 
$(W_4)$ $(x^*\ra y^*)\ra (y\ra x)=1$. 
Wajsberg algebras are bounded with $0=1^*$, and they are involutive. 
It was proved in \cite{Font1} that Wajsberg algebras are termwise equivalent to MV algebras. \\
\indent
\emph{BE algebras} were introduced in \cite{Kim1} as algebras $(X,\ra,1)$ of type $(2,0)$ satisfying the 
following conditions, for all $x,y,z\in X$: 
$(BE_1)$ $x\ra x=1;$ 
$(BE_2)$ $x\ra 1=1;$ 
$(BE_3)$ $1\ra x=x;$ 
$(BE_4)$ $x\ra (y\ra z)=y\ra (x\ra z)$. 
A relation $\le$ is defined on $X$ by $x\le y$ iff $x\ra y=1$. 
A BE algebra $X$ is \emph{bounded} if there exists $0\in X$ such that $0\le x$, for all $x\in X$. 
In a bounded BE algebra $(X,\ra,0,1)$ we define $x^*=x\ra 0$, for all $x\in X$. 
A bounded BE algebra $X$ is called \emph{involutive} if $x^{**}=x$, for any $x\in X$. \\
A BE algebra $X$ is called \emph{commutative} if $(x\ra y)\ra y=(y\ra x)\ra x$, for all $x,y\in X$. 
A bounded BE algebra $X$ is called \emph{involutive} if $x^{**}=x$, for any $x\in X$. \\
Obviously, any BCK algebra is a BE algebra, but the exact connection between BE algebras and 
BCK algebras is made in the papers \cite{Ior16, Ior17}: a BCK algebra is a BE algebra satisfying $(BCK_4)$ (antisymmetry) and $(BCK_1)$. \\
\indent
A \emph{suplement algebra} (\emph{S-algebra, for short}) is an algebra $(X,\oplus,^*,0,1)$ of type $(2,1,0,0)$   
satisfying the following axioms for all $x, y, z\in X$: 
$(S_1)$ $x\oplus y=y\oplus x;$ 
$(S_2)$ $x\oplus (y\oplus z)=(x\oplus y)\oplus z;$ 
$(S_3)$ $x\oplus x^*=1;$ 
$(S_4)$ $x\oplus 0=x;$ 
$(S_5)$ $x^{**}=x;$ 
$(S_6)$ $0^*=1;$ 
$(S_7)$ $x\oplus 1=1$ (\cite{Gudder}). \\
The following additional operations can be defined in a supplement algebra: \\
$x\odot y=(x^*\oplus y^*)^*$, $x\Cap_S y=(x\oplus y^*)\odot y$, $x\Cup_S y=(x\odot y^*)\oplus y$. \\
A \emph{quantum-MV algebra} (\emph{QMV algebra, for short}) is an S-algebra $(X,\oplus,^*,0,1)$ satisfying the 
following axiom for all $x, y, z\in X$ (\cite{Giunt2}): \\
$(QMV)$ $x\oplus ((x^*\Cap_S y)\Cap_S (z\Cap_S x^*))=(x\oplus y)\Cap_S (x\oplus z)$. 

\begin{lemma} \label{qbe-10} 
Let $(X,\ra,1)$ be a BE algebra. The following hold for all $x,y,z\in X$: \\
$(1)$ $x\ra (y\ra x)=1;$ \\
$(2)$ $x\le (x\ra y)\ra y$. \\
If $X$ is bounded, then: \\
$(3)$ $x\ra y^*=y\ra x^*;$ \\
$(4)$ $x\le x^{**}$. \\
If $X$ is involutive, then: \\
$(5)$ $x^*\ra y=y^*\ra x;$ \\
$(6)$ $x^*\ra y^*=y\ra x;$ \\
$(7)$ $(x\ra y)^*\ra z=x\ra (y^*\ra z);$ \\
$(8)$ $x\ra (y\ra z)=(x\ra y^*)^*\ra z;$ \\   
$(9)$ $(x^*\ra y)^*\ra (x^*\ra y)=(x^*\ra x)^*\ra (y^*\ra y)$.  
\end{lemma}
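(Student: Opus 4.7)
The proof is a chain of equational manipulations built from $(BE_1)$–$(BE_4)$, the definition $x^*=x\ra 0$, and involutivity. I would handle the nine items in the order listed, because each one (from (5) onward) leans on the previous ones.

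For (1), I swap with $(BE_4)$: $x\ra(y\ra x)=y\ra(x\ra x)=y\ra 1=1$ by $(BE_1)$ and $(BE_2)$. For (2), I again apply $(BE_4)$ to get $x\ra((x\ra y)\ra y)=(x\ra y)\ra(x\ra y)=1$ by $(BE_1)$. Statement (3) is just $(BE_4)$ with $z=0$, and (4) unpacks $x^{**}=x^*\ra 0$, applies $(BE_4)$, and closes with $(BE_1)$.

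Assuming involutivity, (5) follows by applying (3) to $x^*$ and $y^*$ and then using $x^{**}=x$, $y^{**}=y$; (6) is obtained the same way from (3). For (7), set $u=x\ra y$ and compute
\[
(x\ra y)^*\ra z \;=\; z^*\ra(x\ra y) \;=\; x\ra(z^*\ra y) \;=\; x\ra(y^*\ra z),
\]
where the first and last equalities use (5) and the middle one uses $(BE_4)$. Statement (8) then comes from (7) by replacing $y$ with $y^*$ and invoking involutivity on $y^{**}$.

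The only step that requires a little care is (9). My plan is to apply (8) twice, bracketing an invocation of $(BE_4)$ in the middle:
\[
(x^*\ra y)^*\ra(x^*\ra y) \;=\; x^*\ra\bigl(y^*\ra(x^*\ra y)\bigr) \;=\; x^*\ra\bigl(x^*\ra(y^*\ra y)\bigr)
\]
by (8) (with the roles of the variables so that $(x^*\ra y)^*$ arises as $(x^*\ra y^{**})^*$) and then $(BE_4)$; one more application of (8) in reverse rewrites the outer expression as $(x^*\ra x^{**})^*\ra(y^*\ra y)$, and involutivity turns $x^{**}$ into $x$. The main obstacle is keeping track of which variable plays which role in the substitutions into (7)/(8); once that bookkeeping is right, (9) falls out immediately.
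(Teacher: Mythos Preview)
Your proof is correct and follows essentially the same approach as the paper's: the paper refers (1)--(6) to a cited source and for (7)--(9) carries out the same equational chain you give, the only cosmetic differences being that the paper proves (8) directly via (6), $(BE_4)$, and (5) rather than as a substitution instance of (7), and invokes (7) rather than (8) in the derivation of (9). Your bookkeeping is fine; in particular your use of (8) in (9) is just (7) with $y$ replaced by $y^*$, so the two arguments coincide.
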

\begin{proof} 
$(1)$-$(6)$ See \cite{Ciu78}. \\ 
$(7)$ Applying $(BE_4)$ we get: $(x\ra y)^*\ra z=z^*\ra (x\ra y)=x\ra (z^*\ra y)=x\ra (y^*\ra z)$. \\
$(8)$ Using $(BE_4)$, we have: $x\ra (y\ra z)=x\ra (z^*\ra y^*)=z^*\ra (x\ra y^*)=(x\ra y^*)^*\ra z$. \\
$(9)$ Applying twice $(7)$, we get: 
$(x^*\ra y)^*\ra (x^*\ra y)=x^*\ra (y^*\ra (x^*\ra y))=x^*\ra (x^*\ra (y^*\ra y))=(x^*\ra x)^*\ra (y^*\ra y)$. 
\end{proof}

\noindent
In a BE algebra $X$, we define the additional operation $x\Cup y=(x\ra y)\ra y$. 
If $X$ is involutive, we define the operations $x\Cap y=((x^*\ra y^*)\ra y^*)^*=(x^*\Cup y^*)^*$, 
$x\odot y=(x\ra y^*)^*=(y\ra x^*)^*$, and the relation $\le_Q$ by $x\le_Q y$ iff $x=x\Cap y$. 

\begin{proposition} \label{qbe-20} Let $X$ be an involutive BE algebra. 
Then the following hold for all $x,y,z\in X$: \\
$(1)$ $x\le_Q y$ implies $x\le y$, $x=y\Cap x$ and $y=x\Cup y;$ \\
$(2)$ $\le_Q$ is reflexive and antisymmetric; \\
$(3)$ $(x\Cap y)\ra z=(y\ra x)\ra (y\ra z);$ \\
$(4)$ $(x\Cap y)^*\ra (y\ra x)^*=y\Cup (y\ra x)^*;$ \\
$(5)$ $(x\Cap (y\Cap z))^*=((z\ra x)\Cap (z\ra y))\ra z^*;$ \\
$(6)$ $x, y\le_Q z$ and $z\ra x=z\ra y$ imply $x=y;$ \emph{(cancellation law)} \\
$(7)$ $x\Cap y=y\odot (y\ra x)$.    
\end{proposition}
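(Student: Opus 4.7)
The plan is to treat all seven items using the same workhorse identity: unfolding $x\Cap y = (x^*\Cup y^*)^* = ((x^*\ra y^*)\ra y^*)^*$ and replacing $x^*\ra y^*$ by $y\ra x$ via Lemma~\ref{qbe-10}(6) gives the compact form $x\Cap y = ((y\ra x)\ra y^*)^*$, equivalently $(x\Cap y)^* = (y\ra x)\ra y^*$. Almost every step below is a short manipulation on top of this.

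For (1), I would write the hypothesis $x = x\Cap y$ as $x^* = (y\ra x)\ra y^*$, then compute $x\ra y = y^*\ra x^*$ by Lemma~\ref{qbe-10}(5), substitute and collapse the result via $(BE_4)$ to $(y\ra x)\ra 1 = 1$, proving $x\le y$. With $x\ra y = 1$ in hand, $x\Cup y = (x\ra y)\ra y = y$ is immediate, and $y\Cap x = ((x\ra y)\ra x^*)^* = (1\ra x^*)^* = x$. For (2), reflexivity is direct from $x\Cap x = ((x^*\ra x^*)\ra x^*)^* = (1\ra x^*)^* = x$, and antisymmetry drops out of (1): if $x\le_Q y$ and $y\le_Q x$, then (1) yields $x = y\Cap x$ and $y = y\Cap x$.

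For (3), I would apply Lemma~\ref{qbe-10}(7), namely $(a\ra b)^*\ra c = a\ra (b^*\ra c)$, to the unfolded form with $a = y\ra x$, $b = y^*$, $c = z$; this is a one-line calculation. For (4), setting $a = y\ra x$, the left side becomes $(a\ra y^*)\ra a^*$; Lemma~\ref{qbe-10}(3) swaps $a\ra y^* = y\ra a^*$, and what remains is literally $y\Cup a^*$. For (5), I would apply part (3) three times: first with the meet $y\Cap z$ in the left slot to get $(x\Cap(y\Cap z))\ra w = ((y\Cap z)\ra x)\ra ((y\Cap z)\ra w)$, then reduce each occurrence of $(y\Cap z)\ra \_$ by (3) again, and finally set $w=0$; the right-hand side unfolds to the same expression by a single application of (3).

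For (6), from $x,y\le_Q z$ and the formula $u\Cap z = ((z\ra u)\ra z^*)^*$ one reads off $x = ((z\ra x)\ra z^*)^*$ and $y = ((z\ra y)\ra z^*)^*$, so $z\ra x = z\ra y$ gives $x = y$ at once. Item (7) is pure unfolding: $y\odot(y\ra x) = ((y\ra x)\ra y^*)^* = x\Cap y$ using the definition $a\odot b = (b\ra a^*)^*$. I do not expect any genuine obstacle; the main care is in (5), where the nested meet forces one to apply part (3) at the correct slots and in the correct order, but no new ingredient beyond the identities of Lemma~\ref{qbe-10} is needed.
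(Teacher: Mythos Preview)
Your proposal is correct and follows essentially the same approach as the paper: both rely on the unfolded form $x\Cap y=((y\ra x)\ra y^*)^*$ together with the identities of Lemma~\ref{qbe-10}, and your treatments of (4), (6), (7) match the paper's computations almost line for line. The only cosmetic difference is in (5), where the paper works from the right-hand side back to $(x\Cap(y\Cap z))^*$ while you reduce both sides to the common expression $((z\ra y)\ra(z\ra x))\ra((z\ra y)\ra z^*)$; the underlying idea---iterated use of (3)---is identical.
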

\begin{proof} 
$(1)-(3)$ See \cite{Ciu78}. \\
$(4)$ We have: \\
$\hspace*{2.00cm}$ $(x\Cap y)^*\ra (y\ra x)^*=((x^*\ra y^*)\ra y^*)\ra (y\ra x)^*$ \\
$\hspace*{5.50cm}$ $=((y\ra x)\ra y^*)\ra (y\ra x)^*$ \\
$\hspace*{5.50cm}$ $=(y\ra (y\ra x)^*)\ra (y\ra x)^*=y\Cup (y\ra x)^*$. \\
$(5)$ Aplying $(3)$, we get: \\
$\hspace*{1.00cm}$ $((z\ra x)\Cap (z\ra y))\ra z^*=((x^*\ra z^*)\Cap (y^*\ra z^*))\ra z^*$ \\
$\hspace*{5.25cm}$ $=((y^*\ra z^*)\ra (x^*\ra z^*))\ra ((y^*\ra z^*)\ra z^*)$ \\
$\hspace*{5.25cm}$ $=(x^*\ra ((y^*\ra z^*)\ra z^*))\ra ((y^*\ra z^*)\ra z^*)$ \\
$\hspace*{5.25cm}$ $=(x^*\ra (y^*\Cup z^*))\ra (y^*\Cup z^*)$ \\
$\hspace*{5.25cm}$ $=(x^*\ra (y\Cap x)^*)\ra (y\Cap z)^*$ \\
$\hspace*{5.25cm}$ $=(x\Cap (y\Cap z))^*$. \\
$(6)$ Since $x,y\le_Q z$ and $z\ra x=z\ra y$, we have: \\
$\hspace*{2.00cm}$ $x=x\Cap z=((x^*\ra z^*)\ra z^*)^*=((z\ra x)\ra z^*)^*$ \\
$\hspace*{2.30cm}$ $=((z\ra y)\ra z^*)^*=((y^*\ra z^*)\ra z^*)^*=y\Cap z=y$. \\
$(7)$ We have 
$y\odot (y\ra x)=(y\ra (y\ra x)^*)^*=((y\ra x)\ra y^*)^*=((x^*\ra y^*)\ra y^*)^*=x\Cap y$. 
\end{proof}

A \emph{(left-)quantum-Wajsberg algebra} (\emph{QW algebra, for short}) $(X,\ra,^*,1)$ is an involutive BE algebra $(X,\ra,^*,1)$ satisfying the following condition for all $x,y,z\in X$: \\
(QW) $x\ra ((x\Cap y)\Cap (z\Cap x))=(x\ra y)\Cap (x\ra z)$. 

\noindent
Condition (QW) is equivalent to the following conditions: \\
($QW_1$) $x\ra (x\Cap y)=x\ra y;$ \\ 
($QW_2$) $x\ra (y\Cap (z\Cap x))=(x\ra y)\Cap (x\ra z)$. 

\begin{definition} \label{qmv-30} $\rm($\cite{Ior30}$\rm)$
\emph{      
A \emph{(left-)m-BE algebra} is an algebra $(X,\odot,^{*},1)$ of type $(2,1,0)$ satisfying the following properties, 
for all $x,y,z\in X$:  
(PU) $1\odot x=x=x\odot 1;$ 
(Pcomm) $x\odot y=y\odot x;$ 
(Pass) $x\odot (y\odot z)=(x\odot y)\odot z;$  
(m-L) $x\odot 0=0;$ 
(m-Re) $x\odot x^{*}=0$, 
where $0:=1^*$. 
}\end{definition}
Note that, according to \cite[Cor. 17.1.3]{Ior35}, the involutive (left-)BE algebras $(X,\ra,^*,1)$ are term-equivalent to involutive (left-)m-BE algebras $(X,\odot,^*,1)$, by the mutually inverse transformations 
(\cite{Ior30, Ior35}): \\ 
$\hspace*{3cm}$ $\Phi:$\hspace*{0.2cm}$ x\odot y:=(x\ra y^*)^*$ $\hspace*{0.1cm}$ and  
                $\hspace*{0.1cm}$ $\Psi:$\hspace*{0.2cm}$ x\ra y:=(x\odot y^*)^*$. 
                
\begin{definition} \label{qmv-40} $\rm($\cite[Def. 3.10]{Ior34}$\rm)$
\emph{      
A \emph{(left-)quantum-MV algebra}, or a \emph{(left-)QMV algebra} for short, is an involutive (left-)m-BE algebra
$(X,\odot,^{*},1)$ verifying the following axiom: for all $x,y,z\in X$, \\
(Pqmv) $x\odot ((x^*\Cup y)\Cup (z\Cup x^*))=(x\odot y)\Cup (x\odot z)$. 
}
\end{definition}

\begin{proposition} \label{qmv-50}
The (left-)quantum-Wajsberg algebras are term-equivalent to (left-)quantum-MV algebras. 
\end{proposition}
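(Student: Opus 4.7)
The plan is to exploit the fact, already recalled in the excerpt, that involutive BE algebras and involutive m-BE algebras are definitionally equivalent via the mutually inverse transformations $\Phi:\ x\odot y:=(x\ra y^*)^*$ and $\Psi:\ x\ra y:=(x\odot y^*)^*$. So the only thing to show is that, when one transports an involutive BE algebra to its associated involutive m-BE algebra (and back), the axiom (QW) becomes exactly (Pqmv).

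First I would nail down how the derived operations transform. The operations $\Cap$ and $\Cup$ on an involutive BE algebra $(X,\ra,^*,1)$ are defined by $x\Cup y=(x\ra y)\ra y$ and $x\Cap y=(x^*\Cup y^*)^*$. Using $(a\ra b)^*=a\odot b^*$ (which is just $\Phi$ together with involutivity), a short computation gives
\[
x\Cup y=(x\odot y^*)\oplus y,\qquad x\Cap y=(x\oplus y^*)\odot y,
\]
where as usual $x\oplus y:=(x^*\odot y^*)^*$. These are precisely the operations $\Cup_S$ and $\Cap_S$ appearing in the S-algebra setting. In particular the De Morgan identities $(a\Cap b)^*=a^*\Cup b^*$ and $(a\Cup b)^*=a^*\Cap b^*$ hold, and $\Cap,\Cup$ are preserved under the pair $\Phi,\Psi$.

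Next I would apply $^*$ to both sides of (QW). The left-hand side becomes
\[
\bigl(x\ra((x\Cap y)\Cap(z\Cap x))\bigr)^*=x\odot\bigl((x\Cap y)\Cap(z\Cap x)\bigr)^*,
\]
and by the De Morgan step this equals $x\odot\bigl((x^*\Cup y^*)\Cup(z^*\Cup x^*)\bigr)$. The right-hand side becomes $(x\ra y)^*\Cup(x\ra z)^*=(x\odot y^*)\Cup(x\odot z^*)$. Thus (QW) is equivalent to
\[
x\odot\bigl((x^*\Cup y^*)\Cup(z^*\Cup x^*)\bigr)=(x\odot y^*)\Cup(x\odot z^*).
\]
Since involutivity makes the substitution $y\mapsto y^*$, $z\mapsto z^*$ bijective on $X$, this in turn is equivalent to
\[
x\odot\bigl((x^*\Cup y)\Cup(z\Cup x^*)\bigr)=(x\odot y)\Cup(x\odot z),
\]
which is exactly (Pqmv). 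Conversely, the same chain read backwards shows (Pqmv) implies (QW).

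The only real obstacle is the bookkeeping: one must be careful that $\Cap$ (as defined in the BE setting of the paper) and $\Cup$ (as it will appear in the m-BE version of (Pqmv)) are genuine De Morgan duals, and that each rewriting step uses only involutivity and $(BE_4)$. Everything else is a straightforward transport of structure via $\Phi$ and $\Psi$.
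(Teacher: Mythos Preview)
Your proposal is correct and follows essentially the same approach as the paper: both arguments use the $\Phi,\Psi$ dictionary between $\ra$ and $\odot$, the De Morgan duality $(a\Cap b)^*=a^*\Cup b^*$, and the bijective substitution $y\mapsto y^*$, $z\mapsto z^*$ to translate (QW) into (Pqmv). The paper runs the computation starting from (Pqmv) and arriving at (QW), while you start from (QW); your extra verification that the BE-operations $\Cap,\Cup$ coincide with the S-algebra operations $\Cap_S,\Cup_S$ is a useful clarification but not a different idea.
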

\begin{proof}
We prove that the axioms $(Pqmv)$ and $(QW)$ are equivalent. 
Using the transformation $\Phi$, from (Pqmv) we get: \\
$\hspace*{0.5cm}$ $x\odot ((x^*\Cup y)\Cup (z\Cup x^*))=(x\ra ((x^*\Cup y)\Cup (z\Cup x^*))^*)^*
=(x\ra ((x\Cap y^*)\Cap (z^*\Cap x)))^*$ and \\
$\hspace*{0.5cm}$ $(x\odot y)\Cup (x\odot z)=(x\ra y^*)^*\Cup (x\ra z^*)^*=((x\ra y^*)\Cap (x\ra z^*))^*$, \\
hence (Pqmv) becomes: \\
$\hspace*{0.5cm}$ $(x\ra ((x\Cap y^*)\Cap (z^*\Cap x)))^*=((x\ra y^*)\Cap (x\ra z^*))^*$, \\
for all $x,y,z\in X$. Replacing $y$ by $y^*$ and $z$ by $z^*$, we get axiom (QW). 
Similarly axiom (QW) implies axiom (Pqmv). 
\end{proof}

In what follows, by quantum-MV algebras and quantum-Wajsberg algebras we understand the left-quantum-MV algebras and  left-quantum-Wajsberg algebras, respectively. 

\begin{proposition} \label{qbe-60} $\rm($\cite{Ciu78}$\rm)$ Let $X$ be a quantum-Wajsberg algebra. 
The following hold for all $x,y,z\in X$:\\
$(1)$ $x\ra (y\Cap x)=x\ra y$ and $(x\ra y)\ra (y\Cap x)=x;$ \\
$(2)$ $x\le_Q x^*\ra y$ and $x\le_Q y\ra x;$ \\
$(3)$ $x\le y$ iff $y\Cap x=x;$ \\
$(4)$ $(x\ra y)\Cup (y\ra x)=1$. \\
If  $x\le_Q y$, then: \\
$(5)$ $y=y\Cup x;$ \\
$(6)$ $y^*\le_Q x^*;$ \\
$(7)$ $y\ra z\le_Q x\ra z$ and $z\ra x\le_Q z\ra y;$ \\
$(8)$ $x\Cap z\le_Q y\Cap z$ and $x\Cup z\le_Q y\Cup z;$ \\
$(9)$ $x\odot z\le_Q y\odot z$. 
\end{proposition}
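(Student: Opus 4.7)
The plan is to derive the nine items from axiom (QW) (equivalently $(QW_1)$ and $(QW_2)$), the involutive-BE identities of Lemma~\ref{qbe-10}, and the general properties of $\Cap,\Cup,\odot,\le_Q$ collected in Proposition~\ref{qbe-20}. Since several statements depend on others, I would establish them in the order $(3)\to(1_a)\to(2)\to(4)\to(5)\to(1_b)\to(6)\to(7)\to(8)\to(9)$, where $(1_a)$ and $(1_b)$ denote the two equations of (1).

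The easy pieces proceed as follows. $(1_a)$: $x\ra(y\Cap x)=x\ra y$ comes from $(QW_2)$ by specializing $z=1$ and checking $1\Cap x=x$, $a\Cap 1=a$, $x\ra 1=1$, all immediate from the involution. $(3)$: unfold $y\Cap x=((x\ra y)\ra x^*)^*$ via Lemma~\ref{qbe-10}(6); both implications then reduce to $1\ra x^*=x^*$. $(2)$: to get $x\le_Q y\ra x$ one verifies $x=x\Cap(y\ra x)$. The direction $x\le x\Cap(y\ra x)$ follows from $(QW_1)$ applied with $y\ra x$ in place of $y$ together with Lemma~\ref{qbe-10}(1); the reverse uses the general fact $a\Cap b\le_Q b$, which is a consequence of $(1_a)$ via Proposition~\ref{qbe-20}(7). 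The statement $x\le_Q x^*\ra y$ reduces to the previous case via Lemma~\ref{qbe-10}(5). $(4)$: expand $(x\ra y)\Cup(y\ra x)=((x\ra y)\ra(y\ra x))\ra(y\ra x)$, apply $(BE_4)$, and use axiom (QW).

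The core of the proof is the pair $(5)$ and $(1_b)$. For $(1_b)$, I would unfold $y\Cap x=((x\ra y)\ra x^*)^*$ and apply Lemma~\ref{qbe-10}(3) to obtain
\[(x\ra y)\ra(y\Cap x)=(x\ra(x\ra y)^*)\ra(x\ra y)^*=x\Cup(x\ra y)^*.\]
A short computation from $(1_a)$ and Proposition~\ref{qbe-20}(7) yields $(x\ra y)^*\Cap x=(x\ra y)^*$, so $(x\ra y)^*\le_Q x$; the conclusion $x\Cup(x\ra y)^*=x$ is then the instance of (5) applied to this pair. For (5) itself, given $x=x\Cap y$ one derives $(y\ra x)\odot y=x$ by Proposition~\ref{qbe-20}(7), hence $(y\ra x)\ra y^*=x^*$ via Lemma~\ref{qbe-10}(3); combined with Proposition~\ref{qbe-20}(4) this gives $y\Cup x=y\Cup(y\ra x)^*$, after which an application of axiom (QW) with the substitutions making $y\Cap x$ collapse to $x$ produces $y\Cup x=y$.

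The remaining items are monotonicity statements for $\le_Q$. $(6)$ follows from (5) via Lemma~\ref{qbe-10}(6), which lets involution reverse both $\le$ and $\le_Q$. $(7)$ uses Proposition~\ref{qbe-20}(3) and the cancellation law Proposition~\ref{qbe-20}(6) after rewriting $y\ra z$, $x\ra z$ in terms of $(x\Cap y)\ra z$. $(8)$ follows from $(QW_2)$ applied directly to $x\Cap z$ versus $y\Cap z$, with the analogous argument (using Lemma~\ref{qbe-10}(6)) for $\Cup$. $(9)$ is then obtained from $(8)$ via the translation $a\odot b=(a\ra b^*)^*$ of Lemma~\ref{qbe-10}. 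The main obstacle is $(5)$: axiom (QW) must be used to upgrade the identity $x\Cup y=y$ already supplied by Proposition~\ref{qbe-20}(1) to the reverse equality $y\Cup x=y$, since $\Cup$ is not commutative in general in a quantum-Wajsberg algebra; arranging the proof so that (5) precedes $(1_b)$ and $(7)$--$(9)$ is essential.
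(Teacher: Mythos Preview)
The paper does not prove this proposition here; it is quoted from \cite{Ciu78} without argument, so there is nothing to compare against. I will comment on your outline on its own terms, because there is a genuine circularity in it.

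You reduce the second identity of $(1)$, correctly, to $x\Cup(x\ra y)^*=x$, and then invoke $(5)$ (applied to the pair $(x\ra y)^*\le_Q x$) to finish. For $(5)$ you reduce, again correctly via Proposition~\ref{qbe-20}(4),(7), to showing $y\Cup x=y$ from the hypothesis, and then appeal to ``an application of axiom $(QW)$ with the substitutions making $y\Cap x$ collapse to $x$''. But no such direct substitution in $(QW)$, $(QW_1)$ or $(QW_2)$ yields this; the only natural route to $y\Cup x=y$ from $x\Cap y=x$ is to swap $x,y$ in the second identity of $(1)$, obtaining $(y\ra x)\ra(x\Cap y)=y$, and then substitute. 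So your $(5)$ needs $(1_b)$ and your $(1_b)$ needs $(5)$.

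The fix is to reverse the order. The equality $x\Cup(x\ra y)^*=x$ that you need for $(1_b)$ is, by involution, the same as $x^*\Cap(x\ra y)=x^*$, i.e.\ $x^*\le_Q x\ra y$; and this is exactly the first half of $(2)$. Hence $(2)\Rightarrow(1_b)$ with no appeal to $(5)$, and then $(5)$ follows from $(1_b)$ in one line as above.

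Two smaller points. Your justification for the converse of $(3)$ (``both implications reduce to $1\ra x^*=x^*$'') only handles the forward direction: from $(x\ra y)\ra x^*=x^*$ one cannot conclude $x\ra y=1$ in a bare BE algebra. Instead, once $(1_a)$ is available, $y\Cap x=x$ gives $x\ra y=x\ra(y\Cap x)=x\ra x=1$; so $(1_a)$ should precede $(3)$, not follow it. Likewise, your argument for $(2)$ combines $x\le x\Cap(y\ra x)$ with the fact $a\Cap b\le_Q b$; but the latter only gives $x\Cap(y\ra x)\le_Q y\ra x$, which does not close the gap (antisymmetry of $\le$ is unavailable). A clean direct route is to specialise $(QW)$ at $y=0$: since $x\Cap 0=0$ and $0\Cap a=0$, one gets $x^*=x^*\Cap(x\ra z)$, i.e.\ $x^*\le_Q x\ra z$, and both halves of $(2)$ follow by involution.
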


\begin{proposition} \label{qbe-80} Let $X$ be a quantum-Wajsberg algebra. The following hold, for all $x,y,z\in X$:\\
$(1)$ $(x\Cap y)\Cap (y\Cap z)=(x\Cap y)\Cap z;$ \\
$(2)$ $\le_Q$ is transitive; \\
$(3)$ $(z\Cap x)\ra (y\Cap x)=(z\Cap x)\ra y;$ \\ 
$(4)$ $x\le_Q y$ and $y\le x$ imply $x=y;$ \\
$(5)$ $x\le_Q y$ implies $x\Cap (y\Cap z)=x\Cap z;$ \\
$(6)$ $z\Cap ((y^*\ra z)\Cap (x^*\ra y))=z\Cap (x^*\ra y);$ \\ 
$(7)$ $x\Cup (x\ra y)^*=x;$ \\
$(8)$ $x=y\ra x$ iff $y=x\ra y;$ \\
$(9)$ $x\Cap y, y\Cap x\le_Q x\ra y$. 
\end{proposition}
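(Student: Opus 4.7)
My overall strategy is to take the nine items in the order $(5), (2), (6), (1), (3), (4), (7), (8), (9)$, since $(5)$ is the main technical step from which $(1), (2), (6)$ all follow, while the remaining items are essentially independent calculations. For $(5)$, suppose $x\le_{Q}y$. By Proposition \ref{qbe-60}$(7)$ this yields $z\ra x\le_{Q}z\ra y$, whence by Proposition \ref{qbe-20}$(1)$ we have $(z\ra x)\Cap(z\ra y)=z\ra x$. Substituting into the identity $(x\Cap(y\Cap z))^{*}=((z\ra x)\Cap(z\ra y))\ra z^{*}$ of Proposition \ref{qbe-20}$(5)$ gives $(x\Cap(y\Cap z))^{*}=(z\ra x)\ra z^{*}$, and Lemma \ref{qbe-10}$(6)$ identifies this with $(x\Cap z)^{*}$. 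Involution closes $(5)$.

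Items $(2)$, $(6)$, and $(1)$ follow from $(5)$ by direct substitution. For $(2)$, apply $(5)$ to $x\le_{Q}y$ and use $y=y\Cap z$ (from $y\le_{Q}z$) to obtain $x\Cap y=x\Cap z$; combining with $x=x\Cap y$ yields $x=x\Cap z$. For $(6)$, take $z\le_{Q}y^{*}\ra z$ from Proposition \ref{qbe-60}$(2)$ as the hypothesis of $(5)$. For $(1)$, I first record the auxiliary $x\Cap y\le_{Q}y$: writing $x\Cap y=y\odot(y\ra x)$ by Proposition \ref{qbe-20}$(7)$ and applying Proposition \ref{qbe-60}$(9)$ with $y\ra x\le_{Q}1$ gives $y\odot(y\ra x)\le_{Q}y\odot 1=y$; then $(5)$ with $x\Cap y$ in place of $x$ produces $(1)$.

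The remaining items are short calculations. Part $(3)$ follows from two applications of Proposition \ref{qbe-20}$(3)$ together with the reduction $x\ra(y\Cap x)=x\ra y$ of Proposition \ref{qbe-60}$(1)$, both sides collapsing to $(x\ra z)\ra(x\ra y)$. Part $(4)$ is immediate: $x\le_{Q}y$ gives $x=x\Cap y$ by definition, while $y\le x$ gives $x\Cap y=y$ by Proposition \ref{qbe-60}$(3)$. Part $(7)$ is obtained by recasting Proposition \ref{qbe-60}$(1)$'s identity $(x\ra y)\ra(y\Cap x)=x$ as $(y\Cap x)^{*}\ra(x\ra y)^{*}=x$ via Lemma \ref{qbe-10}$(6)$, and recognising the left-hand side as $x\Cup(x\ra y)^{*}$ via Proposition \ref{qbe-20}$(4)$ (with $x,y$ swapped). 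Part $(8)$ substitutes the hypothesis $x=y\ra x$ into the symmetric form $(y\ra x)\ra(x\Cap y)=y$ of Proposition \ref{qbe-60}$(1)$ to get $x\ra(x\Cap y)=y$, then applies axiom $(QW_{1})$. Part $(9)$ splits: $x\Cap y\le_{Q}x\ra y$ follows from transitivity $(2)$ applied to the chain $x\Cap y\le_{Q}y\le_{Q}x\ra y$, where the second step is Proposition \ref{qbe-60}$(2)$; for $y\Cap x\le_{Q}x\ra y$ I would directly compute $(y\Cap x)\Cap(x\ra y)$ using $(y\Cap x)^{*}=(x\ra y)\ra x^{*}$ together with Lemma \ref{qbe-10}$(3)$ and $(7)$. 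The main obstacle is $(5)$, where the trick is to notice that Proposition \ref{qbe-20}$(5)$ gives the right starting point and that Proposition \ref{qbe-60}$(7)$ provides precisely the monotonicity needed to collapse the nested $\Cap$; once $(5)$ is in hand, the rest is arrow arithmetic.
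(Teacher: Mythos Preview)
Your argument is correct and takes a genuinely different route from the paper on the central items. The paper cites \cite{Ciu78} for $(1)$--$(3)$ and then derives $(5)$ as an immediate corollary of $(1)$; you reverse this dependency, obtaining $(5)$ directly from Proposition~\ref{qbe-20}$(5)$ together with the monotonicity of Proposition~\ref{qbe-60}$(7)$, and then reading off $(1)$, $(2)$, $(6)$ as special cases. This is more self-contained (no deferral to the external reference for $(1)$) and exhibits $(5)$ as the organising identity, whereas the paper treats $(1)$ as primary. Your proofs of $(7)$ and $(8)$ also differ: the paper obtains $(7)$ from $(x\ra y)^*\le_Q x$ and Proposition~\ref{qbe-60}$(5)$, and $(8)$ by first computing $y^*\Cap x^*=0$; your direct reductions via Proposition~\ref{qbe-20}$(4)$ and Proposition~\ref{qbe-60}$(1)$ respectively are equally short. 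One place where your sketch is thin is the second half of $(9)$: the computation of $(y\Cap x)\Cap(x\ra y)$ reduces, after Lemma~\ref{qbe-10}$(3)$, to showing $x\Cup(x\ra y)^*=x$, which is precisely item $(7)$ --- so it does go through given your ordering, but the citation of ``Lemma~\ref{qbe-10}$(3)$ and $(7)$'' alone understates what is used. The paper instead argues $(x\ra y)^*\le_Q x\ra(x\ra y)^*=(x\ra y)\ra x^*=(y\Cap x)^*$ directly from Proposition~\ref{qbe-60}$(2)$ and Lemma~\ref{qbe-10}$(3)$, which avoids appealing to item $(7)$.
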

\begin{proof}
$(1)-(3)$ See \cite{Ciu78}. \\
$(4)$ By Proposition \ref{qbe-60}$(3)$, $y\le x$ implies $x\Cap y=y$. 
Since $x\le_Q y$, we have $x\Cap y=x$, hence $x=y$. \\
$(5)$ Using $(1)$, $(x\Cap y)\Cap (y\Cap z)=(x\Cap y)\Cap z$.  
Since $x\le_Q y$ implies $x\Cap y=x$, we get $x\Cap (y\Cap z)=x\Cap z$. \\
$(6)$ It follows by $(5)$, since $z\le_Q y^*\ra z;$ \\
$(7)$ By Proposition \ref{qbe-60}$(2)$,$(5)$, we have $x^*\le_Q x\ra y$, so that $(x\ra y)^*\le_Q x$ and 
$x\Cup (x\ra y)^*=x$. \\
$(8)$ Suppose $x=y\ra x$, so that $y^*\Cap x^*=(y\Cup x)^*=((y\ra x)\ra x)^*=(x\ra x)^*=1^*=0$. 
Using $(QW_1)$, we get $y=(y^*)^*=y^*\ra 0=y^*\ra (y^*\Cap x^*)=y^*\ra x^*=x\ra y$. 
The converse follows similarly. \\
$(9)$ Using Proposition \ref{qbe-60}$(2)$, we have $y\le_Q x\ra y$, so that 
$(x\ra y)^*\le_Q y^*\le_Q (x^*\ra y^*)\ra y^*=(x\Cap y)^*$. Hence $x\Cap y\le_Q x\ra y$. 
Similarly $(x\ra y)^*\le (x\ra y)\ra x^*=(y^*\ra x^*)\ra x^*=(y\Cap x)^*$. 
Thus $y\Cap x\le_Q x\ra y$. 
\end{proof}
\noindent
By Propositions \ref{qbe-20}$(2)$, \ref{qbe-80}$(2)$, in a quantum-Wajsberg algebra $X$, $\le_Q$ is a partial 
order on $X$. \\
A quantum-Wajsberg algebra $X$ is called \emph{commutative} if $x\Cup y=y\Cup x$, or equivalently 
$x\Cap y=y\Cap x$ for all $x,y\in X$. \\
Since: \\
- commutative BE algebras are commutative BCK algebras (\cite{Walend1}]),  \\
- bounded commutative BCK are term-equivalent to MV algebras (\cite{Mund1}) and \\
- Wajsberg algebras are term-equivalent to MV algebras (\cite{Font1}), \\
it follows that bounded commutative BE algebras are bounded commutative BCK algebras, hence are term-equivalent 
to MV algebras, hence to Wajsberg algebras. \\
Hence the commutative quantum-Wajsberg algebras are the Wajsberg algebras. \\
It was proved in \cite{Ciu78} that a quantum-Wajsberg algebra is a bounded commutative BCK algebra, that is a 
Wajsberg algebra, if and only if the relations $\le$ and $\le_Q$ coincide. 

\begin{proposition} \label{qbe-90} $\rm($\cite{Ciu78}$\rm)$ Let $(X,\ra,0,1)$ be a bounded commutative BCK algebra. 
The following hold for all $x,y,z\in X$: \\
$(1)$ $x\le_Q y$ and $x\le_Q z$ imply $x\le_Q y\Cap z;$ \\
$(2)$ $y\le_Q x$ and $z\le_Q x$ imply $y\Cup z\le_Q x;$ \\
$(3)$ $x\le_Q y$ implies $x\Cup z\le_Q y\Cup z$ and $x\Cap z\le_Q y\Cap z$. 
\end{proposition}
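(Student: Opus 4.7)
The plan is to exploit the fact, observed in the paragraph just before the proposition, that in a bounded commutative BCK algebra the two relations $\le$ and $\le_Q$ coincide. Under this identification, the three claims collapse to standard lattice-theoretic statements about the Wajsberg (equivalently, MV) reduct of $X$, namely that $x\Cap y$ is the meet and $x\Cup y$ is the join of $x$ and $y$ with respect to $\le$.

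First I would verify the meet/join properties. Lemma \ref{qbe-10}(2) gives $y\le (x\ra y)\ra y=x\Cup y$, and commutativity of $\Cup$ (which holds in a Wajsberg algebra by $(W_3)$) yields $x\le x\Cup y$; dually, $x\Cap y=(x^*\Cup y^*)^*\le x,y$ by involutivity. The universal properties --- $x,y\le w$ implies $x\Cup y\le w$, and dually $w\le x,y$ implies $w\le x\Cap y$ --- constitute the classical statement that every bounded commutative BCK algebra is a distributive lattice under $\Cap,\Cup$ (cf.\ \cite{Font1, Mund1}); they can also be derived directly from $(W_2), (W_3)$ together with $(BCK_1)$.

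Once $\Cap,\Cup$ are known to be meet and join of $\le$, parts $(1)$ and $(2)$ are immediate from these universal properties. Part $(3)$ then reduces to monotonicity of the lattice operations: from $x\le y$ one has $x\Cap z\le x\le y$ and $x\Cap z\le z$, so by $(1)$ applied in the already-proved form, $x\Cap z\le y\Cap z$; dually, from $x,z\le y\Cup z$ (using $x\le y\le y\Cup z$) one obtains $x\Cup z\le y\Cup z$ via $(2)$. Translating $\le$ back to $\le_Q$ by the coincidence of the two relations yields the proposition exactly as stated.

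The only real obstacle is bookkeeping --- verifying the lattice identities for $\Cap,\Cup$ on the Wajsberg reduct --- and this is classical with no genuine difficulty. The substantive point of the proposition is simply that in the commutative case the ``quantum'' order $\le_Q$ degenerates into an order for which $\Cap$ and $\Cup$ behave as honest lattice operations, so both the join-preservation of $(2)$ and the meet-preservation of $(1)$, together with the monotonicity in $(3)$, are forced.
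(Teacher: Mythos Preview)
Your approach is correct. Note, however, that the paper does not actually supply a proof of this proposition: it is quoted verbatim from \cite{Ciu78} and stated without argument, so there is no in-paper proof to compare against. Your strategy---observing that in a bounded commutative BCK algebra the relations $\le$ and $\le_Q$ coincide and then invoking the standard lattice structure of the Wajsberg/MV reduct, where $\Cap$ and $\Cup$ are the meet and join---is exactly the natural one and matches the spirit of the surrounding discussion (the paragraph immediately preceding the proposition records precisely this coincidence of orders). The derivations of $(1)$, $(2)$, $(3)$ from the universal properties of meet and join are routine, as you indicate.
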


$\vspace*{2mm}$

\section{The Wajsberg-center of quantum-Wajsberg algebras}

In this section,  we investigate the commutativity property of quantum-Wajsberg algebras. 
We define the Wajsberg-center or the commutative center of a quantum-Wajsberg algebra $X$ as the set of those elements 
of $X$ that commute with all other elements of $X$. 
We study certain properties of the Wajsberg-center, and prove that the Wajsberg-center is a Wajsberg  
subalgebra of $X$. 
In what follows, $(X,\ra,^*,1)$ will be a quantum-Wajsberg algebra, unless otherwise stated. 

\begin{definition} \label{cqbe-10}
\emph{
We say that the elements $x,y\in X$ \emph{commute}, denoted by $x \mathcal{C} y$, if $x\Cap y=y\Cap x$. 
}
\end{definition}

\begin{definition} \label{cqbe-10-10}
\emph{
The \emph{commutative center} of $X$ is the set $\mathcal{Z}(X)=\{x\in X\mid x \mathcal{C} y$, for all $y\in X\}$.  
}
\end{definition}

\noindent
Obviously $0,1\in \mathcal{Z}(X)$. 

\begin{lemma} \label{cqbe-20} If $x \mathcal{C} y$, then $x\Cup y=y\Cup x$. 
\end{lemma}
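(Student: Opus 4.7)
The plan is to obtain from the existing machinery two general identities that hold in every quantum-Wajsberg algebra, and then use the hypothesis together with $(BE_4)$ to flip them into the desired equality.

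First I would chain Proposition \ref{qbe-20}$(4)$, namely $(x\Cap y)^*\ra (y\ra x)^*=y\Cup (y\ra x)^*$, with Proposition \ref{qbe-80}$(7)$, which collapses $y\Cup (y\ra x)^*$ to $y$, and then rewrite the left-hand side via Lemma \ref{qbe-10}$(3)$ ($a\ra b^*=b\ra a^*$) to obtain the unconditional identity
\[
(y\ra x)\ra (x\Cap y)=y.
\]
Exchanging the roles of $x$ and $y$ gives the companion identity $(x\ra y)\ra (y\Cap x)=x$; both hold in every quantum-Wajsberg algebra, with no commutation hypothesis.

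Now the assumption $x\mathcal{C} y$ says that $c:=x\Cap y=y\Cap x$ is a single element, so the two identities above specialize to $(y\ra x)\ra c=y$ and $(x\ra y)\ra c=x$. Substituting the first one for the innermost $y$ in $x\Cup y=(x\ra y)\ra y$ yields $(x\ra y)\ra\bigl((y\ra x)\ra c\bigr)$; a single application of $(BE_4)$ swaps the two leading antecedents into $(y\ra x)\ra\bigl((x\ra y)\ra c\bigr)$, and the second identity then collapses the inner expression to $x$, giving $(y\ra x)\ra x=y\Cup x$.

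No substantive obstacle is expected; the proof is a few lines once the right pieces are lined up. The only step that requires foresight is noticing that Propositions \ref{qbe-20}$(4)$ and \ref{qbe-80}$(7)$ combine to produce the pair of ``adjoint'' identities $(y\ra x)\ra (x\Cap y)=y$ and $(x\ra y)\ra (y\Cap x)=x$, which are precisely what is needed to convert a hypothesis phrased in terms of $\Cap$ into a conclusion phrased in terms of $\Cup$ via a single antecedent swap.
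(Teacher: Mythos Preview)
Your argument is correct and follows the same strategy as the paper: establish the pair of identities $(y\ra x)\ra (x\Cap y)=y$ and $(x\ra y)\ra (y\Cap x)=x$, substitute one into $x\Cup y=(x\ra y)\ra y$, swap antecedents via $(BE_4)$, and collapse with the other. The only difference is that you rederive these identities from Proposition~\ref{qbe-20}$(4)$, Proposition~\ref{qbe-80}$(7)$, and Lemma~\ref{qbe-10}$(3)$, whereas the paper simply cites them as Proposition~\ref{qbe-60}$(1)$, where $(x\ra y)\ra (y\Cap x)=x$ is already recorded; your detour is sound but unnecessary.
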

\begin{proof}
Applying twice Proposition \ref{qbe-60}$(1)$, we have: \\
$\hspace*{2.00cm}$ $x\Cup y=(x\ra y)\ra y= (x\ra y) \ra ((y\ra x)\ra (x\Cap y))$ \\
$\hspace*{2.95cm}$ $= (y\ra x)\ra ((x\ra y)\ra (x\Cap y))$ \\
$\hspace*{2.95cm}$ $= (y\ra x)\ra ((x\ra y)\ra (y\Cap x))=(y\ra x)\ra x=y\Cup x$. 
\end{proof}

\begin{lemma} \label{cqbe-30} Let $x, y\in X$. The following are equivalent: \\
$(a)$ $x \mathcal{C} y;$ \\
$(b)$ $(x\ra y)\ra (x\Cap y)=x$. 
\end{lemma}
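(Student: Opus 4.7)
The plan is to reduce both directions to identities already available in the excerpt, using the identity $(x\to y)\to(y\Cap x)=x$ from Proposition \ref{qbe-60}(1) as the main bridge.

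For the direction (a) $\Rightarrow$ (b), I would simply substitute. Assuming $x\Cap y=y\Cap x$, I replace $y\Cap x$ by $x\Cap y$ in the identity $(x\to y)\to(y\Cap x)=x$ of Proposition \ref{qbe-60}(1) to immediately get $(x\to y)\to(x\Cap y)=x$. This is the easy direction.

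For (b) $\Rightarrow$ (a), the idea is to exhibit $x\Cap y$ and $y\Cap x$ as two elements with the same image under the map $u\mapsto(x\to y)\to u$, and then cancel. More precisely, Proposition \ref{qbe-60}(1) gives $(x\to y)\to(y\Cap x)=x$, while the hypothesis gives $(x\to y)\to(x\Cap y)=x$, so the two values agree. To cancel, I invoke the cancellation law of Proposition \ref{qbe-20}(6): whenever $u,v\le_Q w$ and $w\to u=w\to v$, one concludes $u=v$. The required $\le_Q$-bounds are supplied by Proposition \ref{qbe-80}(9), which states precisely $x\Cap y,\,y\Cap x\le_Q x\to y$. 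Applying cancellation with $w=x\to y$, $u=x\Cap y$, $v=y\Cap x$ yields $x\Cap y=y\Cap x$, i.e.\ $x\mathcal{C}y$.

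The main (and essentially only) obstacle is recognizing that the proof is a pure cancellation argument and locating the exact lemmas that make it work: Proposition \ref{qbe-60}(1) to evaluate $(x\to y)\to(y\Cap x)$, Proposition \ref{qbe-80}(9) to check the $\le_Q$-hypothesis, and the cancellation law Proposition \ref{qbe-20}(6) to conclude. No new computations are needed; the whole argument is two lines in each direction.
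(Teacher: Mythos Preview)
Your proposal is correct and follows essentially the same route as the paper: both directions hinge on Proposition~\ref{qbe-60}(1), and for $(b)\Rightarrow(a)$ the paper likewise combines that identity with Proposition~\ref{qbe-80}(9) and the cancellation law of Proposition~\ref{qbe-20}(6) to conclude $x\Cap y=y\Cap x$.
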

\begin{proof}
$(a)\Rightarrow (b)$ By Proposition \ref{qbe-60}$(1)$, we get  
$x=(x\ra y)\ra (y\Cap x)=(x\ra y)\ra (x\Cap y)$. \\
$(b)\Rightarrow (a)$ Suppose $(x\ra y)\ra (x\Cap y)=x$, and applying Proposition \ref{qbe-60}$(1)$, we have: 
$(x\ra y)\ra (x\Cap y)=(x\ra y)\ra (y\Cap x)(=x)$. 
Since by Proposition \ref{qbe-80}$(9)$, $x\Cap y, y\Cap x\le_Q x\ra y$, by cancellation law (Proposition \ref{qbe-20}$(6)$), we get $x\Cap y=y\Cap x$. Hence $x \mathcal{C} y$. 
\end{proof}

\begin{proposition} \label{cqbe-40} The following hold: \\
$(1)$ the relation $\mathcal{C}$ is reflexive and symmetric; \\
$(2)$ if $x\le_Q y$ or $y\le_Q x$, then $x \mathcal{C} y;$ \\ 
$(3)$ $x \mathcal{C} y$ implies $x^* \mathcal{C} y^*;$ \\
$(4)$ $(x\Cap y)^* \mathcal{C} (x\ra y)^*$. 
\end{proposition}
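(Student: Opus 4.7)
The plan is to handle the four parts in order, each being a short consequence of the earlier material, with parts (1) and (2) being essentially immediate, part (3) following from the involutive duality between $\Cap$ and $\Cup$, and part (4) reducing to part (2) via a $\le_Q$ comparison.

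For part (1), reflexivity $x\,\mathcal{C}\,x$ is trivial and symmetry is built into the defining equation $x\Cap y = y\Cap x$. For part (2), suppose $x\le_Q y$; by definition $x=x\Cap y$, and Proposition \ref{qbe-20}(1) also gives $x = y\Cap x$, so $x\Cap y = x = y\Cap x$. The case $y\le_Q x$ is symmetric.

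For part (3), the key observation is that the defining formula $x\Cap y=((x^*\ra y^*)\ra y^*)^*=(x^*\Cup y^*)^*$, applied with $x,y$ replaced by $x^*,y^*$ and using involutivity $x^{**}=x$, gives the dual identity
\[
x^*\Cap y^* = (x\Cup y)^*, \qquad y^*\Cap x^* = (y\Cup x)^*.
\]
Now if $x\,\mathcal{C}\,y$, Lemma \ref{cqbe-20} gives $x\Cup y=y\Cup x$, so taking $^*$ of both sides yields $x^*\Cap y^*=y^*\Cap x^*$, i.e.\ $x^*\,\mathcal{C}\,y^*$.

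For part (4), I would aim to exhibit a $\le_Q$-comparability between $(x\Cap y)^*$ and $(x\ra y)^*$ and then invoke part (2). Proposition \ref{qbe-80}(9) asserts $x\Cap y\le_Q x\ra y$, and Proposition \ref{qbe-60}(6) gives contravariance of $\le_Q$ under $^*$, so
\[
(x\ra y)^* \le_Q (x\Cap y)^*,
\]
which by part (2) forces $(x\Cap y)^*\,\mathcal{C}\,(x\ra y)^*$. The only mild subtlety is choosing the right direction of the $\le_Q$ inequality before dualizing; once that is spotted, the proof is mechanical, and I do not foresee any genuine obstacle.
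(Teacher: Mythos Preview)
Your proof is correct and follows essentially the same approach as the paper: parts (1)--(3) are identical, and for part (4) the paper also reduces to part (2) via the $\le_Q$-comparability $(x\ra y)^*\le_Q (x\Cap y)^*$, obtained through the chain $x\Cap y\le_Q y\le_Q x\ra y$ (using Proposition~\ref{qbe-60}(2)) rather than by citing Proposition~\ref{qbe-80}(9) directly. The difference is cosmetic.
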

\begin{proof}
$(2)$ If $x\le_Q y$, then $x=x\Cap y$ and, by Proposition \ref{qbe-20}$(1)$ we have $x=y\Cap x$. 
Hence $x\Cap y=y\Cap x$, that is $x \mathcal{C} y$, and similarly $y\le_Q x$ implies $x \mathcal{C} y$. \\  
$(3)$ Using Lemma \ref{cqbe-20}, we have: $x^*\Cap y^*=(x\Cup y)^*=(y\Cup x)^*=y^*\Cap x^*$, 
hence $x^* \mathcal{C} y^*$. \\
$(4)$ Since $x\Cap y\le_Q y\le_Q x\ra y$, we get $(x\ra y)^*\le_Q (x\Cap y)^*$. 
Applying $(2)$, it follows that $(x\Cap y)^* \mathcal{C} (x\ra y)^*$. 
\end{proof}

\begin{corollary} \label{cqbe-40-10} $\mathcal{Z}(X)$ is closed under $^*$. 
\end{corollary}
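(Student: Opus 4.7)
The plan is short. To show $\mathcal{Z}(X)$ is closed under $^*$, I take an arbitrary $x \in \mathcal{Z}(X)$ and an arbitrary $z \in X$, and I want to verify $x^* \mathcal{C} z$.

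The key observation is to apply Proposition \ref{cqbe-40}(3) together with involutivity. Since $x \in \mathcal{Z}(X)$, the element $x$ commutes with every element of $X$; in particular, it commutes with $z^*$, i.e.\ $x \mathcal{C} z^*$. By Proposition \ref{cqbe-40}(3), this yields $x^* \mathcal{C} z^{**}$. Because $X$ is involutive, $z^{**} = z$, so we obtain $x^* \mathcal{C} z$. Since $z$ was arbitrary, $x^* \in \mathcal{Z}(X)$.

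There is really no obstacle here — the corollary is a direct consequence of part (3) of the preceding proposition combined with the involutivity axiom $z^{**} = z$ of quantum-Wajsberg algebras. The only thing to be careful about is to invoke Proposition \ref{cqbe-40}(3) in the right direction: one applies it to the pair $(x, z^*)$ rather than to $(x, z)$, so that after taking stars the second slot becomes $z^{**} = z$.
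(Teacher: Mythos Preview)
Your proof is correct and follows essentially the same route as the paper. The only cosmetic difference is that the paper unpacks Proposition~\ref{cqbe-40}(3) inline (invoking Lemma~\ref{cqbe-20} to get $x\Cup z^*=z^*\Cup x$ and then taking complements), whereas you cite Proposition~\ref{cqbe-40}(3) directly together with involutivity; the underlying argument is identical.
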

\begin{proof}
Let $x\in \mathcal{Z}(X)$, that is $x\mathcal{C} z$ for all $z\in X$. 
We also have $x\mathcal{C} z^*$, and applying Lemma \ref{cqbe-20} we have $x\Cup z^*=z^*\Cup x$. 
It follows that $x^*\Cap z=(x\Cup z^*)^*=(z^*\Cup x)^*=z\Cap x^*$. Hence $x^*\in \mathcal{Z}(X)$.  
\end{proof}

\begin{proposition} \label{cqbe-50} If $x,y,z\in X$ such that $x \mathcal{C} y$ and $x \mathcal{C} z$, then 
$(x\Cap y)\Cap z=y\Cap (x\Cap z)$. 
\end{proposition}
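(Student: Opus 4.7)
The plan is to show that both $(x\Cap y)\Cap z$ and $y\Cap(x\Cap z)$ reduce to the common expression $(x\Cap y)\Cap(x\Cap z)$. The two main ingredients are Proposition \ref{qbe-20}(7), i.e.\ $a\Cap b = b\odot(b\ra a)$, which converts a $\Cap$ into an $\odot/\ra$ expression and back, together with Proposition \ref{qbe-80}(3) and (5).

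The left-hand side is handled by a short preparatory step: $x\mathcal{C} y$ forces $x\Cap y\le_Q x$. Indeed, Proposition \ref{qbe-80}(1) gives $(x\Cap y)\Cap(y\Cap x) = (x\Cap y)\Cap x$; rewriting $y\Cap x$ as $x\Cap y$ via $x\mathcal{C} y$ and using the idempotency $(x\Cap y)\Cap(x\Cap y) = x\Cap y$ coming from reflexivity of $\le_Q$, one obtains $(x\Cap y)\Cap x = x\Cap y$. Then Proposition \ref{qbe-80}(5), applied with $a=x\Cap y$, $b=x$, $c=z$, yields $(x\Cap y)\Cap(x\Cap z) = (x\Cap y)\Cap z$.

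For the right-hand side, I first use Proposition \ref{qbe-20}(7) to write $y\Cap(x\Cap z) = (x\Cap z)\odot\bigl((x\Cap z)\ra y\bigr)$, and then rewrite the inner arrow as $(x\Cap z)\ra y = (x\Cap z)\ra(x\Cap y)$. This rewriting threads Proposition \ref{qbe-80}(3), namely $(z\Cap x)\ra(y\Cap x) = (z\Cap x)\ra y$, with both commutation hypotheses: apply $x\mathcal{C} z$ to convert $x\Cap z$ into $z\Cap x$, then use (3) to insert the missing $\Cap x$ on the right, then $x\mathcal{C} z$ again to return to $x\Cap z$, and finally $x\mathcal{C} y$ to replace $y\Cap x$ by $x\Cap y$. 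A second application of Proposition \ref{qbe-20}(7), this time with $a=x\Cap y$ and $b=x\Cap z$, folds $(x\Cap z)\odot\bigl((x\Cap z)\ra(x\Cap y)\bigr)$ into $(x\Cap y)\Cap(x\Cap z)$, matching the expression already obtained for the left-hand side.

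The main obstacle is aligning Proposition \ref{qbe-80}(3) with the expression actually in front of us: in its stated form the factor $x$ sits on the right of $(z\Cap x)$ and on the right of $(y\Cap x)$, whereas our expression contains $(x\Cap z)$ and just $y$. Both hypotheses $x\mathcal{C} y$ and $x\mathcal{C} z$ are needed precisely to bridge this mismatch, and it is the passage through the $\odot/\ra$ representation from Proposition \ref{qbe-20}(7) that makes (3) applicable at all to a term of shape $y\Cap(x\Cap z)$.
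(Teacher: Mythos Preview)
Your proof is correct and follows essentially the same route as the paper's: both reduce each side to the common middle term $(x\Cap y)\Cap(x\Cap z)$ (equivalently $(y\Cap x)\Cap(x\Cap z)$), using Proposition~\ref{qbe-80}(1) for the left-hand side and Proposition~\ref{qbe-80}(3) for the right-hand side. The only cosmetic differences are that the paper applies Proposition~\ref{qbe-80}(1) directly with $(a,b,c)=(y,x,z)$ rather than going through the intermediate fact $x\Cap y\le_Q x$ and Proposition~\ref{qbe-80}(5), and that the paper unfolds the definition of $\Cap$ in terms of $\ra$ and $^*$ explicitly where you invoke the $\odot$-representation of Proposition~\ref{qbe-20}(7); these are equivalent repackagings of the same computation.
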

\begin{proof} 
From $x\Cap y=y\Cap x$, $x\Cap z=z\Cap x$, and applying Proposition \ref{qbe-80}$(1)$,$(3)$, we get: \\
$\hspace*{2.00cm}$ $(x\Cap y)\Cap z=(y\Cap x)\Cap z=(y\Cap x)\Cap (x\Cap z)$ \\
$\hspace*{3.90cm}$ $=(((y\Cap x)^*\ra (x\Cap z)^*)\ra (x\Cap z)^*)^*$ \\
$\hspace*{3.90cm}$ $=(((y\Cap x)^*\ra (z\Cap x)^*)\ra (z\Cap x)^*)^*$ \\
$\hspace*{3.90cm}$ $=(((z\Cap x)\ra (y\Cap x))\ra (z\Cap x)^*)^*$ \\
$\hspace*{3.90cm}$ $=(((z\Cap x)\ra y)\ra (z\Cap x)^*)^*$ \\
$\hspace*{3.90cm}$ $=((y^*\ra (z\Cap x)^*)\ra (z\Cap x)^*)^*$ \\
$\hspace*{3.90cm}$ $=y\Cap (z\Cap x)=y\Cap (x\Cap z)$. 
\end{proof}

\begin{corollary} \label{cqbe-60} If $x \mathcal{C} y$, $y \mathcal{C} z$ and $x \mathcal{C} z$, then 
$(x\Cap y)\Cap z=z\Cap (x\Cap y)$. 
\end{corollary}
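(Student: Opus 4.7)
The plan is to derive the claim directly from Proposition \ref{cqbe-50} by applying it twice, with two different cyclic relabellings of $(x,y,z)$, and using the commutativity hypotheses together with the symmetry of $\mathcal{C}$ (Proposition \ref{cqbe-40}(1)) to swap $\Cap$ in the intermediate expressions.

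First I would apply Proposition \ref{cqbe-50} with the variables in the given order: since $x\mathcal{C}y$ and $x\mathcal{C}z$, it yields
\[
(x\Cap y)\Cap z=y\Cap (x\Cap z).
\]
Next, using $x\mathcal{C}z$ once more, rewrite this as $y\Cap (z\Cap x)$. Now apply Proposition \ref{cqbe-50} a second time, this time with the relabelling $(x,y,z)\mapsto(z,y,x)$; the needed hypotheses $z\mathcal{C}y$ and $z\mathcal{C}x$ are just the symmetric versions of $y\mathcal{C}z$ and $x\mathcal{C}z$. This gives $y\Cap (z\Cap x)=(z\Cap y)\Cap x$. Using $y\mathcal{C}z$, replace $z\Cap y$ by $y\Cap z$, and finally apply $x\mathcal{C}y$ to obtain $z\Cap (x\Cap y)$ after one more pass; alternatively one can close the chain by a third use of Proposition \ref{cqbe-50} with the relabelling $(x,y,z)\mapsto(y,z,x)$ (valid because $y\mathcal{C}z$ and $y\mathcal{C}x$), giving $(y\Cap z)\Cap x=z\Cap (y\Cap x)=z\Cap (x\Cap y)$.

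Chaining these equalities produces
\[
(x\Cap y)\Cap z \;=\; y\Cap (x\Cap z) \;=\; y\Cap (z\Cap x) \;=\; (z\Cap y)\Cap x \;=\; (y\Cap z)\Cap x \;=\; z\Cap (x\Cap y),
\]
which is precisely the conclusion. There is no real obstacle here; the only subtlety is bookkeeping, namely recognizing that Proposition \ref{cqbe-50} is not symmetric in its first two arguments (the first argument plays the role of the element that commutes with the other two), so a single application will not suffice and we must invoke it a second time after swapping roles via symmetry of $\mathcal{C}$.
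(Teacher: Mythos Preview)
Your proposal is correct and follows essentially the same approach as the paper: the paper's proof is the chain
\[
(x\Cap y)\Cap z=y\Cap (x\Cap z)=y\Cap (z\Cap x)=z\Cap (y\Cap x)=z\Cap (x\Cap y),
\]
obtained by repeated use of Proposition~\ref{cqbe-50} together with the commutativity hypotheses, which is exactly what you do (you simply make the intermediate passage through $(z\Cap y)\Cap x=(y\Cap z)\Cap x$ explicit).
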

\begin{proof} 
By hypothesis and using Proposition \ref{cqbe-50}, we get: \\
$\hspace*{2.00cm}$ $(x\Cap y)\Cap z=y\Cap (x\Cap z)=y\Cap (z\Cap x)=z\Cap (y\Cap x)=z\Cap (x\Cap y)$. 
\end{proof}

\begin{corollary} \label{cqbe-70} $\mathcal{Z}(X)$ is closed under $\Cap$. 
\end{corollary}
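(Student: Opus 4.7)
The plan is to deduce this corollary directly from Corollary \ref{cqbe-60}, which was set up precisely for this purpose. Take arbitrary $x, y \in \mathcal{Z}(X)$; I want to show $x \Cap y \in \mathcal{Z}(X)$, i.e.\ that $(x \Cap y) \mathcal{C} z$ for every $z \in X$.

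First I would observe that the three commutation hypotheses needed to apply Corollary \ref{cqbe-60} are all immediate from $x, y \in \mathcal{Z}(X)$: by definition of the commutative center, $x \mathcal{C} y$ (taking the other element to be $y$), $x \mathcal{C} z$, and $y \mathcal{C} z$. Corollary \ref{cqbe-60} then yields $(x \Cap y) \Cap z = z \Cap (x \Cap y)$, which is exactly the statement $(x \Cap y) \mathcal{C} z$. Since $z$ was arbitrary, this gives $x \Cap y \in \mathcal{Z}(X)$.

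There is no real obstacle here — the previous corollary does all the work, so the proof reduces to checking that the three required commutation relations hold, which is by the very definition of $\mathcal{Z}(X)$. The only care needed is to note that $\mathcal{C}$ is symmetric (Proposition \ref{cqbe-40}(1)) if one prefers to cite $z \mathcal{C} x$ rather than $x \mathcal{C} z$, but this is a cosmetic point.
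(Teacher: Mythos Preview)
Your proof is correct and essentially identical to the paper's: both take $x,y\in\mathcal{Z}(X)$ and an arbitrary $z\in X$, observe that $x\mathcal{C}y$, $x\mathcal{C}z$, $y\mathcal{C}z$ hold by definition of the center, and then invoke Corollary~\ref{cqbe-60} to conclude $(x\Cap y)\Cap z=z\Cap(x\Cap y)$.
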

\begin{proof} 
Let $x,y \in \mathcal{Z}(X)$ and let $z \in X$. It follows that $x \mathcal{C} y$, $y \mathcal{C} z$, 
$x \mathcal{C} z$, and by Corollary \ref{cqbe-60} we get $(x\Cap y)\Cap z=z\Cap (x\Cap y)$. 
Hence $x\Cap y \in \mathcal{Z}(X)$, that is $\mathcal{Z}(X)$ is closed under $\Cap$. 
\end{proof}

\begin{proposition} \label{cqbe-80} Let $x,y,z\in X$ such that $y \mathcal{C} z$. 
Then $x\ra (y\Cap z)\le_Q (x\ra y)\Cap (x\ra z)$.  
\end{proposition}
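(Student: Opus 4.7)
The plan is to combine the hypothesis $y\mathcal{C} z$ with the covariance of $\ra$ in its second argument (Proposition \ref{qbe-60}(7)) and the absorption identity of Proposition \ref{qbe-80}(5), which says $u\le_Q v$ implies $u\Cap(v\Cap w)=u\Cap w$. The strategy proceeds in three steps: first I would establish $y\Cap z\le_Q y$ and $y\Cap z\le_Q z$, then push these inequalities through $\ra$ to the right-hand side, then collapse the resulting intersection via the absorption identity.

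For the first step I would use Proposition \ref{qbe-20}(7), which gives $a\Cap b = b\odot(b\ra a)$, together with $(QW_1)$:
\[
(y\Cap z)\Cap y = y\odot\bigl(y\ra(y\Cap z)\bigr) = y\odot(y\ra z) = z\Cap y = y\Cap z,
\]
where the last equality uses $y\mathcal{C} z$; this gives $y\Cap z\le_Q y$. Symmetrically, using $y\Cap z=z\Cap y$ inside the $(QW_1)$ reduction,
\[
(y\Cap z)\Cap z = z\odot\bigl(z\ra(y\Cap z)\bigr) = z\odot\bigl(z\ra(z\Cap y)\bigr) = z\odot(z\ra y) = y\Cap z,
\]
so $y\Cap z\le_Q z$.

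The second step is an immediate application of the covariant half of Proposition \ref{qbe-60}(7) with $c=x$, yielding
\[
x\ra(y\Cap z)\le_Q x\ra y \quad\text{and}\quad x\ra(y\Cap z)\le_Q x\ra z.
\]
For the third step, denote $u=x\ra(y\Cap z)$, $v=x\ra y$, $w=x\ra z$; I now have $u\le_Q v$ and $u\le_Q w$, and must derive $u\le_Q v\Cap w$. By Proposition \ref{qbe-80}(5) applied to $u\le_Q v$, I get $u\Cap(v\Cap w)=u\Cap w$, and then $u\le_Q w$ collapses this to $u\Cap w=u$. Hence $u=u\Cap(v\Cap w)$, which is precisely $u\le_Q v\Cap w$.

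The only subtle point is the first step: without the commutativity hypothesis, $(y\Cap z)\Cap y$ reduces only to $z\Cap y$, which need not equal $y\Cap z$ in a non-commutative quantum-Wajsberg algebra. Commutativity is used precisely (and only) to identify $z\Cap y$ with $y\Cap z$ at two places, after which the proof is pure bookkeeping via monotonicity and absorption.
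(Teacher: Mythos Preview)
Your proof is correct and takes a cleaner route than the paper's. The paper expands $(x\ra(y\Cap z))\Cap((x\ra y)\Cap(x\ra z))$ directly via the identity of Proposition~\ref{qbe-20}(5), namely $(a\Cap(b\Cap c))^*=((c\ra a)\Cap(c\ra b))\ra c^*$, and then simplifies through a six-line chain of equalities that uses $z\Cap y\le_Q y$, $y\Cap z\le_Q z$, and the hypothesis $y\mathcal{C}z$ to swap $y\Cap z$ with $z\Cap y$ at the appropriate moment. Your argument avoids this computation entirely: after lifting $y\Cap z\le_Q y,z$ to $x\ra(y\Cap z)\le_Q x\ra y,\, x\ra z$ by Proposition~\ref{qbe-60}(7), you invoke only the absorption identity of Proposition~\ref{qbe-80}(5). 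A pleasant by-product is that your Step~3 establishes the implication ``$u\le_Q v$ and $u\le_Q w$ imply $u\le_Q v\Cap w$'' in any QW~algebra, something the paper records (Proposition~\ref{qbe-90}(1)) only for bounded commutative BCK~algebras. One minor redundancy: in the second computation of Step~1 the appeal to commutativity is unnecessary, since Proposition~\ref{qbe-60}(1) gives $z\ra(y\Cap z)=z\ra y$ directly, so $y\Cap z\le_Q z$ holds without any hypothesis on $y,z$; commutativity is genuinely needed only to obtain $y\Cap z\le_Q y$.
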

\begin{proof} 
From $z\Cap y\le_Q y$, we get $x\ra (z\Cap y)\le_Q x\ra y$, so that $(x\ra y)^*\le_Q (x\ra (z\Cap y))^*$ and 
$(x\ra (z\Cap y))^*\ra (x\ra z)^*\le_Q (x\ra y)^*\ra (x\ra z)^*$. 
It follows that: \\
$\hspace*{0.50cm}$
$((x\ra (z\Cap y))^*\ra (x\ra z)^*)\Cap ((x\ra y)^*\ra (x\ra z)^*)=(x\ra (z\Cap y))^*\ra (x\ra z)^*$. \\
Similarly, from $y\Cap z\le_Q z$ we have $x\ra (y\Cap z)\le_Q x\ra z$, hence 
$(x\ra (y\Cap z))\Cap (x\ra z)=x\ra (y\Cap z)$. 
Applying Proposition \ref{qbe-20}$(5)$, and taking into consideration that $y \mathcal{C} z$, we have: \\
$\hspace*{0.50cm}$ $(x\ra (y\Cap z))\Cap ((x\ra y)\Cap (x\ra z))=$ \\
$\hspace*{1.00cm}$ 
$=((((x\ra z)\ra (x\ra (y\Cap z)))\Cap ((x\ra z)\ra (x\ra y)))\ra (x\ra z)^*)^*$ \\ 
$\hspace*{1.00cm}$
$=((((x\ra (y\Cap z))^*\ra (x\ra z)^*)\Cap ((x\ra y)^*\ra (x\ra z)^*))\ra (x\ra z)^*)^*$ \\
$\hspace*{1.00cm}$ 
$=((((x\ra (z\Cap y))^*\ra (x\ra z)^*)\Cap ((x\ra y)^*\ra (x\ra z)^*))\ra (x\ra z)^*)^*$ \\
$\hspace*{1.00cm}$
$=(((x\ra (z\Cap y))^*\ra (x\ra z)^*)\ra (x\ra z)^*)^*$ \\
$\hspace*{1.00cm}$
$=(((x\ra (y\Cap z))^*\ra (x\ra z)^*)\ra (x\ra z)^*)^*$ \\
$\hspace*{1.00cm}$
$=(x\ra (y\Cap z))\Cap (x\ra z)=x\ra (y\Cap z)$. \\
Hence $x\ra (y\Cap z)\le_Q (x\ra y)\Cap (x\ra z)$. 
\end{proof}

\begin{lemma} \label{cqbe-90} If $x \mathcal{C} y$, $x \mathcal{C} z$, $y \mathcal{C} z$, 
then $y\Cap (z\Cap x)\le_Q y\Cap z$. 
\end{lemma}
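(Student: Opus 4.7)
The plan is to use the pairwise commutativity hypotheses to reshape $y\Cap (z\Cap x)$ so that the right-monotonicity of $\Cap$ (Proposition \ref{qbe-60}$(8)$) can be applied, after first upgrading the elementary inequality $x\Cap z\le z$ to the stronger $x\Cap z\le_Q z$.

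First I would establish $z\Cap x\le_Q z$. The easy half, $z\Cap x\le z$, comes from Proposition \ref{qbe-80}$(3)$ by setting its $y$ equal to its $z$ (the identity $u\ra u=1$ collapses the left-hand side). Using $x\mathcal{C} z$ this yields $x\Cap z\le z$, and then by Proposition \ref{qbe-60}$(3)$, $z\Cap (x\Cap z)=x\Cap z$. On the other hand, specializing Proposition \ref{cqbe-50} with both commutation slots equal to $x\mathcal{C} z$ (take its $y:=z$) gives $(x\Cap z)\Cap z=z\Cap (x\Cap z)$. Chaining the two equalities delivers $(x\Cap z)\Cap z=x\Cap z$, i.e., $x\Cap z\le_Q z$, whence also $z\Cap x\le_Q z$ by $x\mathcal{C} z$.

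Next I would bring $y$ in on the left. Corollary \ref{cqbe-60}, applied after the permutation $x,y,z\mapsto z,x,y$ of its variables (legal since our $x,y,z$ pairwise commute), gives $(z\Cap x)\Cap y=y\Cap (z\Cap x)$, that is, $y\mathcal{C}(z\Cap x)$. Combining Proposition \ref{qbe-60}$(8)$ applied to $z\Cap x\le_Q z$, which yields $(z\Cap x)\Cap y\le_Q z\Cap y$, with this commutation and with $z\Cap y=y\Cap z$ (from $y\mathcal{C} z$), we conclude $y\Cap (z\Cap x)=(z\Cap x)\Cap y\le_Q z\Cap y=y\Cap z$, as required.

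The main obstacle is the upgrade from $x\Cap z\le z$ to $x\Cap z\le_Q z$: Proposition \ref{qbe-60}$(8)$ only propagates $\le_Q$ through a $\Cap$ on the right, so the $\le$-version of the sub-meet inequality is insufficient on its own. The commutativity hypothesis $x\mathcal{C} z$ is exactly what allows Proposition \ref{cqbe-50}, specialized with its outer variables identified, to deliver the needed strengthening; Corollary \ref{cqbe-60} then plays the symmetric role on the left, pulling $y$ past $z\Cap x$.
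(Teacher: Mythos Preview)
Your argument is correct and follows the same route as the paper: from $z\Cap x=x\Cap z\le_Q z$ apply Proposition~\ref{qbe-60}$(8)$ to obtain $(z\Cap x)\Cap y\le_Q z\Cap y$, and then use Corollary~\ref{cqbe-60} together with $y\mathcal{C}z$ to rewrite both sides. The only difference is that the paper treats $x\Cap z\le_Q z$ as a known basic fact of quantum-Wajsberg algebras (it holds in general, independently of $x\mathcal{C}z$), whereas you supply a separate derivation of it via Propositions~\ref{qbe-80}$(3)$, \ref{qbe-60}$(3)$ and \ref{cqbe-50}.
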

\begin{proof} 
From $z\Cap x=x\Cap z\le_Q z$, by Proposition \ref{qbe-60}$(8)$ we get $(z\Cap x)\Cap y\le_Q z\Cap y=y\Cap z$. 
Using Corollary \ref{cqbe-60}, we get $y\Cap (z\Cap x)\le_Q y\Cap z$. 
\end{proof}

\begin{proposition} \label{cqbe-100} If $x \mathcal{C} y$, $x \mathcal{C} z$, $y \mathcal{C} z$, 
then $(x\ra y)\Cap (x\ra z) \le x\ra (y\Cap z)$.  
\end{proposition}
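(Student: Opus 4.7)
The plan is to rewrite the left-hand side via axiom (QW), reduce the problem to Lemma \ref{cqbe-90}, and then pass from $\le_Q$ to $\le$.

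By (QW) we have $(x\ra y)\Cap (x\ra z) = x\ra ((x\Cap y)\Cap (z\Cap x))$. Since $x\ra(\cdot)$ is $\le_Q$-monotone (Proposition \ref{qbe-60}(7)) and $\le_Q$ implies $\le$ (Proposition \ref{qbe-20}(1)), it is enough to establish
\[
(x\Cap y)\Cap (z\Cap x)\le_Q y\Cap z.
\]

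To obtain this inequality, I would massage the left-hand side into the form required by Lemma \ref{cqbe-90}. Using $x\mathcal{C} y$ to rewrite $x\Cap y$ as $y\Cap x$ and $x\mathcal{C} z$ to rewrite $z\Cap x$ as $x\Cap z$, Proposition \ref{qbe-80}(1) (with $u=y$, $v=x$, $w=z$) yields $(y\Cap x)\Cap (x\Cap z)=(y\Cap x)\Cap z$. Flipping $y\Cap x$ back to $x\Cap y$ via $x\mathcal{C} y$ and applying Proposition \ref{cqbe-50} (available since $x\mathcal{C} y$ and $x\mathcal{C} z$), this becomes $(x\Cap y)\Cap z = y\Cap (x\Cap z)$, which equals $y\Cap (z\Cap x)$ after one more use of $x\mathcal{C} z$. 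Thus $(x\Cap y)\Cap (z\Cap x) = y\Cap (z\Cap x)$, and Lemma \ref{cqbe-90} then gives $y\Cap (z\Cap x)\le_Q y\Cap z$, which is exactly what the reduction requires.

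The main obstacle is the bookkeeping in the chain of $\Cap$-manipulations, because $\Cap$ is neither commutative nor associative in a general quantum-Wajsberg algebra: every rewrite must be justified by one of the hypotheses $x\mathcal{C} y$, $x\mathcal{C} z$, $y\mathcal{C} z$ or by the earlier identities of Propositions \ref{qbe-80}(1) and \ref{cqbe-50}. Once the expression is folded into the shape $y\Cap (z\Cap x)$, Lemma \ref{cqbe-90} applies directly, and the rest is pure $\le_Q$-monotonicity plus the implication $\le_Q\Rightarrow\le$. Note that the argument actually yields the stronger conclusion $(x\ra y)\Cap (x\ra z)\le_Q x\ra (y\Cap z)$, so combined with Proposition \ref{cqbe-80} it would even give equality up to $\le_Q$, but only $\le$ is claimed here.
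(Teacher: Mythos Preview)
Your argument is correct. Both proofs pivot on Lemma~\ref{cqbe-90}, but the routes to it differ. The paper computes the implication $((x\ra y)\Cap (x\ra z))\ra (x\ra (y\Cap z))$ head-on: it rewrites the expression using involutions, $(BE_4)$, and the identity $(u\Cap v)\ra w=(v\ra u)\ra (v\ra w)$ of Proposition~\ref{qbe-20}(3), eventually collapsing it to $(y\Cap (z\Cap x))\ra (y\Cap z)$, which equals $1$ by Lemma~\ref{cqbe-90}. You instead invoke axiom $(QW)$ to factor $(x\ra y)\Cap (x\ra z)$ as $x\ra((x\Cap y)\Cap(z\Cap x))$, rearrange the inner term into $y\Cap(z\Cap x)$ via Proposition~\ref{qbe-80}(1) and Proposition~\ref{cqbe-50}, apply Lemma~\ref{cqbe-90}, and finish by $\le_Q$-monotonicity of $x\ra(\cdot)$. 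Your approach is more structural and, as you note, actually delivers the stronger conclusion $(x\ra y)\Cap (x\ra z)\le_Q x\ra(y\Cap z)$; the paper's direct computation only yields $\le$, which is why Proposition~\ref{cqbe-110} needs the extra antisymmetry step Proposition~\ref{qbe-80}(4) to combine Propositions~\ref{cqbe-80} and~\ref{cqbe-100} into an equality.
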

\begin{proof} 
Applying Proposition \ref{qbe-20}$(3)$ and Lemma \ref{cqbe-90}, we get: \\
$\hspace*{0.50cm}$ $((x\ra y)\Cap (x\ra z))\ra (x\ra (y\Cap z))=$ \\
$\hspace*{4.00cm}$ $=(x\ra (y\Cap z))^*\ra ((x\ra y)\Cap (x\ra z))^*$ \\
$\hspace*{4.00cm}$ $=(x\ra (y\Cap z))^*\ra ((x\ra y)^*\Cup (x\ra z)^*)$ \\
$\hspace*{4.00cm}$ $=(x\ra (y\Cap z))^*\ra (((x\ra y)^*\ra (x\ra z)^*)\ra (x\ra z)^*)$ \\
$\hspace*{4.00cm}$ $=((x\ra y)^*\ra (x\ra z)^*)\ra ((x\ra (y\Cap z))^*\ra (x\ra z)^*)$ \\
$\hspace*{4.00cm}$ $=((x\ra z)\ra (x\ra y))\ra ((x\ra z)\ra (x\ra (y\Cap z)))$ \\
$\hspace*{4.00cm}$ $=((z\Cap x)\ra y)\ra ((z\Cap x)\ra (y\Cap z))$ \\ 
$\hspace*{4.00cm}$ $=(y\Cap (z\Cap x))\ra (y\Cap z)=1$. \\
It follows that $(x\ra y)\Cap (x\ra z)\le x\ra (y\Cap z)$. 
\end{proof}

\begin{proposition} \label{cqbe-110} If $x \mathcal{C} y$, $x \mathcal{C} z$, $y \mathcal{C} z$, then  
$x\ra (y\Cap z)=(x\ra y)\Cap (x\ra z)$.  
\end{proposition}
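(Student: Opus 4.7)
The key observation is that the two preceding propositions already give the two inequalities needed, but in different orders ($\le_Q$ versus $\le$), so the proof should amount to invoking the antisymmetry-type result of Proposition \ref{qbe-80}(4) which says that $x\le_Q y$ together with $y\le x$ forces $x=y$.

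More precisely, set $a = x\ra (y\Cap z)$ and $b = (x\ra y)\Cap (x\ra z)$. Proposition \ref{cqbe-80} already yields $a \le_Q b$ using only the hypothesis $y \mathcal{C} z$, which is available. Proposition \ref{cqbe-100} yields the reverse inequality $b \le a$ (with respect to $\le$, not $\le_Q$) under the full hypothesis that all three pairs $x\mathcal{C} y$, $x\mathcal{C} z$, $y\mathcal{C} z$ commute, which is exactly our standing hypothesis. So the plan is simply: first quote Proposition \ref{cqbe-80} to get $a\le_Q b$, then quote Proposition \ref{cqbe-100} to get $b\le a$, then apply Proposition \ref{qbe-80}(4) to conclude $a=b$.

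There is essentially no obstacle here, because all the substantive work has been packaged into the two previous propositions. The only small subtlety worth flagging is to double-check that the hypotheses of Proposition \ref{cqbe-80} are symmetric in the sense required, but since that proposition needs only $y\mathcal{C} z$, this is immediate. The argument is a one-line combination: from $x\ra (y\Cap z) \le_Q (x\ra y)\Cap (x\ra z)$ and $(x\ra y)\Cap (x\ra z) \le x\ra (y\Cap z)$, apply Proposition \ref{qbe-80}(4) to obtain equality.
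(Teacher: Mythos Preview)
Your proposal is correct and matches the paper's own proof exactly: the paper simply cites Propositions \ref{cqbe-80}, \ref{cqbe-100}, and \ref{qbe-80}(4), combining $a\le_Q b$ with $b\le a$ via the mixed antisymmetry to obtain $a=b$.
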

\begin{proof}
It follows by Propositions \ref{cqbe-80}, \ref{cqbe-100}, \ref{qbe-80}$(4)$. 
\end{proof}

\begin{corollary} \label{cqbe-120} If $y\in \mathcal{Z}(X)$ and $x,z\in X$, then  
$(z\Cap x)^*\ra ((z\ra x)^*\Cap y)=((z\Cap x)^*\ra (z\ra x)^*)\Cap ((z\Cap x)^*\ra y)$.    
\end{corollary}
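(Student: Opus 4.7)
The plan is to recognize this as a direct application of Proposition \ref{cqbe-110} under the substitution $x \mapsto (z\Cap x)^*$, $y \mapsto (z\ra x)^*$, $z \mapsto y$. With this renaming, the conclusion $x\ra (y\Cap z)=(x\ra y)\Cap (x\ra z)$ of Proposition \ref{cqbe-110} is exactly the identity we are asked to prove, so the entire task reduces to verifying the three pairwise commutativity hypotheses.

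First I would observe that $(z\Cap x)^* \mathcal{C} (z\ra x)^*$ holds by Proposition \ref{cqbe-40}(4), which states precisely that $(x\Cap y)^* \mathcal{C} (x\ra y)^*$ (with a harmless rename of the dummy variables). Next I would note that the hypothesis $y\in \mathcal{Z}(X)$ means, by Definition \ref{cqbe-10-10}, that $y$ commutes with every element of $X$; in particular $y \mathcal{C} (z\Cap x)^*$ and $y \mathcal{C} (z\ra x)^*$. By the symmetry of the commutativity relation (Proposition \ref{cqbe-40}(1)), the latter two give $(z\Cap x)^* \mathcal{C} y$ and $(z\ra x)^* \mathcal{C} y$, which together with the first observation are exactly the three premises required by Proposition \ref{cqbe-110}.

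There is no real obstacle here — the result is essentially a packaging of Proposition \ref{cqbe-110} together with the key commutativity relation from Proposition \ref{cqbe-40}(4) and the defining property of the commutative center. The only point one has to be attentive to is getting the substitution and the role of each variable right, since the outer operand of the implication in the statement is the nontrivial element $(z\Cap x)^*$, not $y$.
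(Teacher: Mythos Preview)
Your argument is correct and is essentially the paper's own proof: the paper also derives the identity by invoking Proposition~\ref{cqbe-40}(4) for the relation $(z\Cap x)^*\,\mathcal{C}\,(z\ra x)^*$ and the membership $y\in\mathcal{Z}(X)$ for the other two relations, then applying Proposition~\ref{cqbe-110}. Your write-up just makes the substitution and the use of symmetry of $\mathcal{C}$ explicit.
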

\begin{proof}
It follows by Propositions \ref{cqbe-40}$(4)$ and \ref{cqbe-110}, since $y\mathcal{C} (z\Cap x)^*$ and 
$y\mathcal{C} (z\ra x)^*$. 
\end{proof}

\begin{corollary} \label{cqbe-130} If $x,y,z\in \mathcal{Z}(X)$, then  
$(z\Cap x)^*\ra y=(y^*\ra z)\Cap (x^*\ra y)$. 
\end{corollary}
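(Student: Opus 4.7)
The plan is to reduce the claimed identity to an instance of Proposition \ref{cqbe-110} by applying the involutive-BE identity $a^*\ra b = b^*\ra a$ from Lemma \ref{qbe-10}$(5)$. Using this identity, I would rewrite the left-hand side $(z\Cap x)^*\ra y$ as $y^*\ra (z\Cap x)$, and rewrite the second factor $x^*\ra y$ on the right as $y^*\ra x$. The identity to be proved then takes the equivalent form
\[
y^*\ra (z\Cap x) = (y^*\ra z)\Cap (y^*\ra x),
\]
which is precisely Proposition \ref{cqbe-110} applied to the triple $(y^*, z, x)$ in place of $(x, y, z)$.

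To invoke Proposition \ref{cqbe-110}, I must verify the three pairwise commutation hypotheses $y^*\,\mathcal{C}\, z$, $y^*\,\mathcal{C}\, x$, and $z\,\mathcal{C}\, x$. The first two come from Corollary \ref{cqbe-40-10}: since $y\in\mathcal{Z}(X)$, the closure of the commutative center under $^*$ gives $y^*\in\mathcal{Z}(X)$, so $y^*$ commutes with every element of $X$, in particular with $z$ and with $x$. The third condition $z\,\mathcal{C}\, x$ is immediate from the assumption $x,z\in\mathcal{Z}(X)$.

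With the hypotheses verified, Proposition \ref{cqbe-110} delivers the displayed equality, and unwinding the Lemma \ref{qbe-10}$(5)$ rewrites returns the statement of the corollary. I anticipate no genuine obstacle here; the whole argument rests on a single observation, namely that the asymmetric-looking right-hand side, once processed by $a^*\ra b = b^*\ra a$, becomes a symmetric instance of the distributive law for $\ra$ over $\Cap$, whose three pairwise commutation hypotheses are automatically satisfied by the closure of $\mathcal{Z}(X)$ under the involution.
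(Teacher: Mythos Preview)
Your proof is correct and follows essentially the same route as the paper's own proof: both use Lemma \ref{qbe-10}$(5)$ to rewrite $(z\Cap x)^*\ra y$ as $y^*\ra(z\Cap x)$ and $x^*\ra y$ as $y^*\ra x$, then invoke Proposition \ref{cqbe-110} for the triple $(y^*,z,x)$, with the needed commutation hypotheses supplied by $y^*\in\mathcal{Z}(X)$ (via Corollary \ref{cqbe-40-10}) and $x,z\in\mathcal{Z}(X)$. If anything, you are slightly more explicit than the paper in spelling out all three pairwise commutation checks.
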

\begin{proof}
Since $y\in \mathcal{Z}(X)$ implies $y^*\in \mathcal{Z}(X)$, applying Proposition \ref{cqbe-110}, we get: 
$(z\Cap x)^*\ra y=y^*\ra (z\Cap x)=(y^*\ra z)\Cap (y^*\ra x)=(y^*\ra z)\Cap (x^*\ra y)$. 
\end{proof}

\begin{proposition} \label{cqbe-140} If $x,y\in \mathcal{Z}(X)$, then $x^*\ra y\in \mathcal{Z}(X)$. 
\end{proposition}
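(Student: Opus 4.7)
My plan is to introduce, for each $z\in X$, the auxiliary element $q:=(z\Cap x)^*\ra y$, reduce the commutation $(x^*\ra y)\,\mathcal{C}\,z$ to $q\,\mathcal{C}\,z$, and then prove the latter by a judicious pair of applications of Proposition \ref{cqbe-110} with the two factors of an inner $\Cap$ swapped.

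For the reduction, I first note that the proof of Corollary \ref{cqbe-130} actually requires only $x,y\in\mathcal{Z}(X)$ (not $z$): Proposition \ref{cqbe-110} applies to the triple $y^*,z,x$ since $y^*,x\in\mathcal{Z}(X)$ forces $y^*\,\mathcal{C}\,z$, $y^*\,\mathcal{C}\,x$, $x\,\mathcal{C}\,z$ all to hold. This gives $q=(y^*\ra z)\Cap(x^*\ra y)$; the same identity with the $\Cap$-factors swapped (using $x\Cap z=z\Cap x$) yields $q=(x^*\ra y)\Cap(y^*\ra z)$, and hence $(y^*\ra z)\,\mathcal{C}\,(x^*\ra y)$. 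Since $z\le_Q y^*\ra z$ by Proposition \ref{qbe-60}(2), we also have $(y^*\ra z)\,\mathcal{C}\,z$ together with the sharp identity $(y^*\ra z)\Cap z=z$. Then Proposition \ref{cqbe-50} applied with the triple $(y^*\ra z),(x^*\ra y),z$ gives $q\Cap z=(x^*\ra y)\Cap z$, while Proposition \ref{qbe-80}(5) gives $z\Cap q=z\Cap(x^*\ra y)$. Thus $(x^*\ra y)\,\mathcal{C}\,z$ is equivalent to $q\,\mathcal{C}\,z$.

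The key identity driving the rest is
\[
(z\Cap x)^*\ra(z\ra x)^*=z,
\]
obtained from Proposition \ref{qbe-60}(1) (in the form $(z\ra x)\ra(x\Cap z)=z$, using $x\Cap z=z\Cap x$ since $x\in\mathcal{Z}(X)$) together with Lemma \ref{qbe-10}(3). Now Proposition \ref{cqbe-110} applies to the triple $(z\Cap x)^*,(z\ra x)^*,y$ — these pairwise commute by Proposition \ref{cqbe-40}(4) and by $y\in\mathcal{Z}(X)$ — and expanding in two ways yields
\[
(z\Cap x)^*\ra\bigl((z\ra x)^*\Cap y\bigr)=z\Cap q,\qquad (z\Cap x)^*\ra\bigl(y\Cap(z\ra x)^*\bigr)=q\Cap z.
\]
Since $y\,\mathcal{C}\,(z\ra x)^*$ (as $y\in\mathcal{Z}(X)$), the two left-hand sides coincide, forcing $z\Cap q=q\Cap z$, i.e., $q\,\mathcal{C}\,z$. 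Combined with the reduction, this gives $(x^*\ra y)\,\mathcal{C}\,z$ for every $z\in X$, so $x^*\ra y\in\mathcal{Z}(X)$.

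The main obstacle I anticipate is spotting the auxiliary element $q$ together with the identity $(z\Cap x)^*\ra(z\ra x)^*=z$; these are precisely the ingredients that allow Proposition \ref{cqbe-110}, applied with swapped $\Cap$-factors, to produce $q\Cap z$ and $z\Cap q$ as two expansions of the \emph{same} expression, equalized only by the centrality of $y$.
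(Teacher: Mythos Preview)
Your proof is correct, and it takes a genuinely different route from the paper's.

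The paper argues via the characterization of Lemma~\ref{cqbe-30}: it produces a single long chain of equalities starting from $z$ (written as $((z\ra x)^*)^*\ra(z\Cap x)$ using $x\,\mathcal{C}\,z$), then substitutes $(z\ra x)^*=((z\ra x)^*\ra y)\ra((z\ra x)^*\Cap y)$ using $y\,\mathcal{C}\,(z\ra x)^*$, and after several manipulations (Corollary~\ref{cqbe-120}, Proposition~\ref{qbe-20}$(4)$, Proposition~\ref{qbe-80}$(7)$, Corollary~\ref{cqbe-130}, Proposition~\ref{qbe-80}$(6)$) arrives at $(z\ra(x^*\ra y))\ra(z\Cap(x^*\ra y))$, whence $z\,\mathcal{C}\,(x^*\ra y)$ by Lemma~\ref{cqbe-30}. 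In effect the paper also passes through the quantity $z\Cap q$ (it appears as $z\Cap((z\Cap x)^*\ra y)$ midway), but it never isolates $q$ as an object of interest and never computes $q\Cap z$.

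Your organization makes the mechanism more transparent: you first reduce $(x^*\ra y)\,\mathcal{C}\,z$ to $q\,\mathcal{C}\,z$ by matching $q\Cap z=(x^*\ra y)\Cap z$ and $z\Cap q=z\Cap(x^*\ra y)$, and then exploit the key identity $(z\Cap x)^*\ra(z\ra x)^*=z$ together with a \emph{pair} of applications of Proposition~\ref{cqbe-110} --- one with factors $(z\ra x)^*,y$ and one with factors $y,(z\ra x)^*$ --- to write $z\Cap q$ and $q\Cap z$ as the same expression up to $y\,\mathcal{C}\,(z\ra x)^*$. This symmetry argument is the conceptual core, and it explains cleanly why centrality of $y$ is exactly what closes the gap. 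The paper's linear computation reaches the same destination but obscures this symmetry; your version is shorter and makes the role of Proposition~\ref{cqbe-110} (applied with swapped $\Cap$-arguments) explicit. Both proofs ultimately rely on the same ingredients: Proposition~\ref{cqbe-110}, Corollary~\ref{cqbe-130} (which, as you correctly observe, needs only $x,y\in\mathcal{Z}(X)$), and the identity $(z\ra x)\ra(z\Cap x)=z$ coming from $x\,\mathcal{C}\,z$.
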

\begin{proof} 
Let $x,y\in \mathcal{Z}(X)$ and let $z\in X$. Then $x \mathcal{C} z$ and $y \mathcal{C} (z\ra x)^*$. 
Applying Lemma \ref{cqbe-30}, we get: \\ 
$\hspace*{2.00cm}$ $z=(z\ra x)\ra (z\Cap x)=((z\ra x)^*)^*\ra (z\Cap x)$ and \\
$\hspace*{2.00cm}$ $(z\ra x)^*=((z\ra x)^*\ra y)\ra ((z\ra x)^*\Cap y)$, \\
respectively. It follows that: \\ 
$\hspace*{1.00cm}$ $z=((z\ra x)^*)^*\ra (z\Cap x)$ \\
$\hspace*{1.30cm}$ $=(((z\ra x)^*\ra y)\ra ((z\ra x)^*\Cap y))^*\ra (z\Cap x)$ \\
$\hspace*{1.30cm}$ $=((z\ra x)^*\ra y)\ra (((z\ra x)^*\Cap y)^*\ra (z\Cap x))$ (by Lemma \ref{qbe-10}$(7)$) \\
$\hspace*{1.30cm}$ $=((z\ra x)^*\ra y)\ra ((z\Cap x)^*\ra ((z\ra x)^*\Cap y))$ \\
$\hspace*{1.30cm}$ $=((z\ra x)^*\ra y)\ra (((z\Cap x)^*\ra (z\ra x)^*)\Cap ((z\Cap x)^*\ra y))$ 
                                                                   (by Corollary \ref{cqbe-120}) \\
$\hspace*{1.30cm}$ $=((z\ra x)^*\ra y)\ra (((x\Cap z)^*\ra (z\ra x)^*)\Cap ((z\Cap x)^*\ra y))$ \\                      $\hspace*{1.30cm}$ $=((z\ra x)^*\ra y)\ra ((z\Cup (z\ra x)^*)\Cap ((z\Cap x)^*\ra y))$ 
                                                                   (by Proposition \ref{qbe-20}$(4)$) \\
$\hspace*{1.30cm}$ $=((z\ra x)^*\ra y)\ra (z\Cap ((z\Cap x)^*\ra y))$
                                                                   (by Proposition \ref{qbe-80}$(7)$) \\
$\hspace*{1.30cm}$ $=((z\ra x)^*\ra y)\ra (z\Cap ((y^*\ra z)\Cap (x^*\ra y))$
                                                                   (by Corrolary \ref{cqbe-130}) \\
$\hspace*{1.30cm}$ $=((z\ra x)^*\ra y)\ra (z\Cap (x^*\ra y))$      (by Proposition \ref{qbe-80}$(6)$) \\
$\hspace*{1.30cm}$ $=(z\ra (x^*\ra y))\ra (z\Cap (x^*\ra y))$      (by Lemma \ref{qbe-10}$(7)$). \\
Using Lemma \ref{cqbe-30}, we conclude that $x^*\ra y\in \mathcal{Z}(X)$. 
\end{proof}

\begin{corollary} \label{cqbe-150} If $x,y\in \mathcal{Z}(X)$, then $x\ra y\in \mathcal{Z}(X)$. 
\end{corollary}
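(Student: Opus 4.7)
The plan is to reduce this corollary directly to the preceding Proposition \ref{cqbe-140} by exploiting the fact that $\mathcal{Z}(X)$ is closed under the involution $^*$, which was established in Corollary \ref{cqbe-40-10}. The preceding proposition already does the heavy lifting: it shows $x^* \ra y \in \mathcal{Z}(X)$ whenever both arguments lie in the center. So turning $x \ra y$ into something of that form is the whole task.

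Concretely, I would start from $x, y \in \mathcal{Z}(X)$. By Corollary \ref{cqbe-40-10}, we have $x^* \in \mathcal{Z}(X)$. Applying Proposition \ref{cqbe-140} to the pair $(x^*, y)$, we obtain $(x^*)^* \ra y \in \mathcal{Z}(X)$. Since $X$ is an involutive BE algebra, $(x^*)^* = x$, so $x \ra y \in \mathcal{Z}(X)$, which is the desired conclusion.

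There is no real obstacle here — the corollary is essentially a one-line consequence of Proposition \ref{cqbe-140} together with involutivity and Corollary \ref{cqbe-40-10}. The only thing to be careful about is invoking involutivity explicitly (the identity $x^{**} = x$ is part of the definition of a quantum-Wajsberg algebra, since it is built on an involutive BE algebra). No new calculation involving $\Cap$, $\Cup$, or the commutation relation $\mathcal{C}$ is needed, because all of that work is encapsulated in the statement of Proposition \ref{cqbe-140}.
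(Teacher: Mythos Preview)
Your proposal is correct and matches the paper's own proof essentially line for line: both use Corollary~\ref{cqbe-40-10} to get $x^*\in\mathcal{Z}(X)$ and then apply Proposition~\ref{cqbe-140} to the pair $(x^*,y)$, with involutivity giving $(x^*)^*\ra y = x\ra y$. There is nothing to add or correct.
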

\begin{proof} 
Since $x\in \mathcal{Z}(X)$, by Corollary \ref{cqbe-40-10} we get $x^*\in \mathcal{Z}(X)$. 
Applying Proposition \ref{cqbe-140}, $x^*,y\in \mathcal{Z}(X)$ implies $x\ra y\in \mathcal{Z}(X)$. 
\end{proof}

\begin{theorem} \label{cqbe-160} $(\mathcal{Z}(X),\ra,0,1)$ is a Wajsberg subalgebra of $X$.    
\end{theorem}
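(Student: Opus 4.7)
The plan is to reduce the theorem to the already-established fact that commutative quantum-Wajsberg algebras are Wajsberg algebras. The heavy lifting has been done in the corollaries above; what remains is to package it.

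First, I would check that $\mathcal{Z}(X)$ is genuinely a subalgebra of $X$, i.e., closed under the operations and containing the distinguished constants. The constants $0,1$ lie in $\mathcal{Z}(X)$ by the observation just after Definition \ref{cqbe-10-10}. Closure under $^*$ is Corollary \ref{cqbe-40-10}, closure under $\Cap$ is Corollary \ref{cqbe-70}, and closure under $\ra$ is Corollary \ref{cqbe-150}. So the carrier $\mathcal{Z}(X)$, equipped with the restrictions of $\ra$ and $^*$ and with the same constants $0,1$, is well-defined as an algebra of the correct signature.

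Second, I would observe that every axiom defining a quantum-Wajsberg algebra, namely the involutive BE algebra axioms $(BE_1)$--$(BE_4)$ together with $x^{**}=x$ and the condition $(QW)$, is a universally quantified identity. Hence these identities automatically transfer from $X$ to any subset closed under the operations; in particular, $(\mathcal{Z}(X),\ra,{}^*,1)$ is itself a quantum-Wajsberg algebra.

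Third, by the very definition of $\mathcal{Z}(X)$, any two of its elements $x,y$ satisfy $x\,\mathcal{C}\,y$, that is $x\Cap y=y\Cap x$. Thus $\mathcal{Z}(X)$ is a \emph{commutative} quantum-Wajsberg algebra. Invoking the chain of equivalences recalled in the preliminaries—commutative BE algebras are commutative BCK algebras, bounded commutative BCK algebras are definitionally equivalent to MV algebras, and MV algebras are definitionally equivalent to Wajsberg algebras—it follows that $\mathcal{Z}(X)$ is a Wajsberg algebra, hence a Wajsberg subalgebra of $X$. There is no real obstacle once the closure corollaries are in hand; the entire content of the theorem is that the preceding sequence of lemmas suffices to close $\mathcal{Z}(X)$ under all the operations of $X$, and the identification with the Wajsberg variety then comes for free from the earlier characterisation of commutative quantum-Wajsberg algebras.
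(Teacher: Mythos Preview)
Your proposal is correct and follows essentially the same route as the paper: establish closure of $\mathcal{Z}(X)$ under the operations (the paper cites only Corollary~\ref{cqbe-150} for $\ra$ and the constants, since $^*$ and $\Cap$ are derived from these), observe that the QW axioms transfer to the subalgebra, and then invoke commutativity together with the chain of equivalences in the preliminaries to conclude that $\mathcal{Z}(X)$ is Wajsberg. Your version is simply more explicit about the intermediate closures and the equational nature of the variety, but the argument is the same.
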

\begin{proof} 
Since by Corollary \ref{cqbe-150}, $x,y\in \mathcal{Z}(X)$ implies $x\ra y\in \mathcal{Z}(X)$, 
it follows that $\mathcal{Z}(X)$ is closed under $\ra$. 
Moreover $0,1\in \mathcal{Z}(X)$, hence it is a quantum-Wajsberg subalgebra of $X$. 
Since $x,y\in \mathcal{Z}(X)$ implies $x\Cap y=y\Cap x$, $(\mathcal{Z}(X),\ra,0,1)$ is a commutative quantum-Wajsberg algebra, that is a bounded commutative BCK subalgebra of $X$. Hence it is a Wajsberg subalgebra of $X$. 
\end{proof}

\begin{corollary} \label{cqbe-170} A quantum-Wajsberg algebra $X$ is a Wajsberg algebra if and only if  $\mathcal{Z}(X)=X$.    
\end{corollary}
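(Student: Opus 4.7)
The plan is to prove both directions directly from the definition of $\mathcal{Z}(X)$ together with the fact recalled in the preliminaries that the commutative quantum-Wajsberg algebras are exactly the Wajsberg algebras. No new structural work is needed; Theorem \ref{cqbe-160} already did the heavy lifting by showing that the commutative center is always a Wajsberg subalgebra, so Corollary \ref{cqbe-170} is essentially the statement that $X$ is itself that subalgebra exactly when it is already commutative.

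For the forward direction, I would suppose that $X$ is a Wajsberg algebra. Since Wajsberg algebras are (bounded) commutative BCK algebras, the derived operation $\Cap$ is commutative on $X$, i.e. $x\Cap y=y\Cap x$ for all $x,y\in X$. By Definition \ref{cqbe-10}, this says $x\mathcal{C}y$ for every pair, and hence every element $x\in X$ lies in $\mathcal{Z}(X)$. Thus $\mathcal{Z}(X)=X$.

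For the converse, I would suppose $\mathcal{Z}(X)=X$. Then every $x\in X$ commutes with every $y\in X$, so $x\Cap y=y\Cap x$ for all $x,y\in X$. By the definition recalled before Proposition \ref{qbe-90}, this means $X$ is a commutative quantum-Wajsberg algebra. Invoking the equivalence stated in the preliminaries (commutative quantum-Wajsberg algebras are exactly Wajsberg algebras), we conclude that $X$ is a Wajsberg algebra.

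There is no real obstacle here: both directions reduce immediately to unpacking the definition of $\mathcal{Z}(X)$ and citing the commutativity characterization from the preliminaries. The only thing to be careful about is to invoke the precise form of commutativity used in the paper ($x\Cap y=y\Cap x$, equivalently $x\Cup y=y\Cup x$), so that the reduction to commutative quantum-Wajsberg algebras is literal and not just in spirit.
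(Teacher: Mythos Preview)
Your proposal is correct and matches the intended argument: the paper states the corollary without proof, leaving it as an immediate consequence of the definition of $\mathcal{Z}(X)$ together with the fact from the preliminaries that commutative quantum-Wajsberg algebras are exactly Wajsberg algebras, which is precisely the reasoning you give.
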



Taking into consideration the above results, the commutative center $\mathcal{Z}(X)$ will be also called the 
\emph{Wajsberg-center} of $X$. 
Similarly as in \cite{Giunt1} for the case of QMV algebras, we define the notion of a quasi-linear quantum-Wajsberg algebra. 

\begin{definition} \label{cqbe-180-10}
\emph{
A QW algebra $X$ is said to be \emph{quasi-linear} if, for all $x,y\in X$, $x\nleq_Q y$ implies $y< x$. 
}
\end{definition}

\begin{proposition} \label{cqbe-190} If $X$ is a quasi-linear QW algebra, then $\mathcal{Z}(X)$ is a linearly 
ordered Wajsberg algebra. 
\end{proposition}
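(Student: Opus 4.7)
The plan is to combine two facts: first, that $\mathcal{Z}(X)$ is already known to be a Wajsberg subalgebra by Theorem \ref{cqbe-160} (so I only need to prove linearity of the induced order); second, that the quasi-linearity of $X$ together with the mutual commutativity of all elements in $\mathcal{Z}(X)$ forces any two of them to be $\le_Q$-comparable.

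Concretely, I would fix $x,y\in \mathcal{Z}(X)$ and aim to show $x\le_Q y$ or $y\le_Q x$. If $x\le_Q y$, there is nothing to prove. Otherwise, quasi-linearity yields $y<x$, and in particular $y\le x$ in the BE order. By Proposition \ref{qbe-60}$(3)$ this means $x\Cap y=y$. Now since $y\in \mathcal{Z}(X)$ commutes with $x$, we have $y\Cap x=x\Cap y=y$, i.e.\ $y\le_Q x$. Thus $\le_Q$ is total on $\mathcal{Z}(X)$.

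It remains to connect this to the Wajsberg structure: by Theorem \ref{cqbe-160}, $\mathcal{Z}(X)$ is a Wajsberg algebra, and as noted in the discussion preceding Proposition \ref{qbe-90} the relations $\le$ and $\le_Q$ coincide on a Wajsberg algebra. Hence the total order $\le_Q$ just established is exactly the natural Wajsberg order on $\mathcal{Z}(X)$, proving that $\mathcal{Z}(X)$ is a linearly ordered Wajsberg algebra.

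The only conceptual step, and the one that does the real work, is the short argument converting $y\le x$ into $y\le_Q x$ via commutativity — everything else is bookkeeping that invokes the earlier results (Theorem \ref{cqbe-160}, Proposition \ref{qbe-60}$(3)$, and the definition of quasi-linearity). No lengthy calculation is needed.
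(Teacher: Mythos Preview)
Your proof is correct and follows essentially the same approach as the paper's own proof: both rely on Theorem~\ref{cqbe-160} to know $\mathcal{Z}(X)$ is a Wajsberg algebra and on the coincidence of $\le$ and $\le_Q$ there to convert the quasi-linearity condition into a total order. The paper simply invokes that coincidence in one line, whereas you unpack the step $y\le x \Rightarrow y\le_Q x$ explicitly via Proposition~\ref{qbe-60}$(3)$ and commutativity, but the underlying argument is the same.
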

\begin{proof}
According to \cite{Ciu78}, a quantum-Wajsberg algebra is a Wajsberg algebra if and only if the 
relations $\le$ and $\le_Q$ coincide. Since $\mathcal{Z}(X)$ is a quasi-linear Wajsberg algebra, 
$x\nleq y$ implies $y< x$, that is $\mathcal{Z}(X)$ is linearly ordered. 
\end{proof}

$\vspace*{1mm}$

\section{The lattice structure of Wajsberg-centers}

We study certain lattice properties of the Wajsberg-center of a quantum-Wajsberg algebra, and prove that the 
Wajsberg-center of a quantum-Wajsberg algebra $X$ is a distributive sublattice of the poset $(X,\le_Q,0,1)$.   
If the quantum-Wajsberg algebra is quasi-linear, we prove that the Wajsberg-center is a linearly ordered 
Wajsberg algebra. Finally, we show that the lattice subreduct of the Wajsberg-center is a Kleene algebra. 
In what follows, $(X,\ra,^*,1)$ will be a quantum-Wajsberg algebra, unless otherwise stated. 

\begin{proposition} \label{lqbe-10} The following hold for all $x,y,z\in \mathcal{Z}(X)$: \\
$(1)$ $x\ra (y\Cap z)=(x\ra y)\Cap (x\ra z)$ $($distributivity of $\ra$ over $\Cap);$ \\
$(2)$ $x\odot (y\Cup z)=(x\odot y)\Cup (x\odot z)$ $($distributivity of $\odot$ over $\Cup);$ \\
$(3)$ $x\Cap (y\Cup z)=(x\Cap y)\Cup (x\Cap z)$ $($distributivity of $\Cap$ over $\Cup);$ \\
$(4)$ $x\Cup (y\Cap z)=(x\Cup y)\Cap (x\Cup z)$ $($distributivity of $\Cup$ over $\Cap)$. 
\end{proposition}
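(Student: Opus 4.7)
The plan is to treat part $(1)$ as the substantive statement and derive the other three parts from it together with the involution-duality built into $\mathcal{Z}(X)$. Note that $\mathcal{Z}(X)$ is closed under $\ra$, $^*$, $\Cap$ (Corollaries \ref{cqbe-40-10}, \ref{cqbe-70}, \ref{cqbe-150}) and hence also under $\odot$ and $\Cup$, since $x\odot y=(x\ra y^*)^*$ and $x\Cup y=(x^*\Cap y^*)^*$. Moreover, within $\mathcal{Z}(X)$ every pair commutes, so the hypotheses $x\mathcal{C}y$, $y\mathcal{C}z$, $x\mathcal{C}z$ of Proposition \ref{cqbe-110} are automatic.

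First I would settle $(1)$ by directly invoking Proposition \ref{cqbe-110}: taking $x,y,z\in\mathcal{Z}(X)$, the three required commutativity relations hold trivially, and one reads off $x\ra(y\Cap z)=(x\ra y)\Cap(x\ra z)$.

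For $(2)$, I would dualise $(1)$ using the involutive De Morgan relations $(u\Cup v)^*=u^*\Cap v^*$ (which is how $\Cap$ was defined from $\Cup$) and $u\odot v=(u\ra v^*)^*$. Explicitly,
\[
x\odot(y\Cup z)=(x\ra(y\Cup z)^*)^*=(x\ra(y^*\Cap z^*))^*,
\]
and applying $(1)$ to $x,y^*,z^*\in\mathcal{Z}(X)$ gives $((x\ra y^*)\Cap(x\ra z^*))^*=(x\ra y^*)^*\Cup(x\ra z^*)^*=(x\odot y)\Cup(x\odot z)$.

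The heart of the argument is $(3)$, which is the standard distributivity of $\Cap$ over $\Cup$. The cleanest route is to use Theorem \ref{cqbe-160}: $\mathcal{Z}(X)$ is a Wajsberg subalgebra, hence (since Wajsberg algebras are term-equivalent to MV algebras) its lattice reduct is the underlying distributive lattice of an MV algebra, and $(3)$ is therefore a classical fact. If a self-contained derivation is preferred, one can combine Proposition \ref{qbe-20}$(7)$, namely $u\Cap v=v\odot(v\ra u)$, with parts $(1)$ and $(2)$ just established, rewriting
\[
x\Cap(y\Cup z)=(y\Cup z)\odot((y\Cup z)\ra x)
\]
and expanding, inside $\mathcal{Z}(X)$, via the distributivity of $\odot$ over $\Cup$ and of $\ra$ over $\Cap$; the computation is routine once everything commutes.

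Finally, $(4)$ I would obtain from $(3)$ by the same involution-duality trick: since $\mathcal{Z}(X)$ is closed under $^*$, applying $(3)$ to $x^*,y^*,z^*$ and taking $^*$ of both sides yields $x\Cup(y\Cap z)=(x\Cup y)\Cap(x\Cup z)$ using $(u\Cap v)^*=u^*\Cup v^*$. The main obstacle is really only $(3)$ — distributivity of $\Cap$ over $\Cup$ — and even there the hard work has already been absorbed into the fact that $\mathcal{Z}(X)$ is a Wajsberg algebra.
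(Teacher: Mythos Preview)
Your treatment of parts $(1)$, $(2)$, and $(4)$ is exactly the paper's: $(1)$ is Proposition~\ref{cqbe-110} specialised to $\mathcal{Z}(X)$, $(2)$ is the involutive dual of $(1)$, and $(4)$ is the involutive dual of $(3)$.

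The only divergence is in $(3)$. Your primary route---invoking Theorem~\ref{cqbe-160} and the classical fact that the lattice reduct of a Wajsberg/MV algebra is distributive---is correct and shorter, but it outsources the work to the MV-algebra literature. The paper instead proves $(3)$ internally: writing $x\Cap(y\Cup z)=(y\Cup z)\odot((y\Cup z)\ra x)$ via Proposition~\ref{qbe-20}$(7)$ and expanding the left factor with $(2)$, one obtains $(y\odot w)\Cup(z\odot w)$ with $w=(y\Cup z)\ra x$; then monotonicity of $\odot$ (Proposition~\ref{qbe-60}$(9)$) together with $w\le_Q y\ra x$, $w\le_Q z\ra x$ gives $x\Cap(y\Cup z)\le_Q(x\Cap y)\Cup(x\Cap z)$, and the reverse inequality comes from Proposition~\ref{qbe-90}. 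Your alternative sketch points at this same decomposition, but the phrase ``the computation is routine'' is a bit optimistic: expanding with $(1)$ and $(2)$ alone does not close to an equality chain (distributivity of $\odot$ over $\Cap$ is not yet available and would be circular here), so the monotonicity-plus-reverse-inequality argument is genuinely needed. Either way, your proposal is sound; the paper simply chooses the self-contained computation over the appeal to MV-algebra theory.
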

\begin{proof}
$(1)$ It follows by Proposition \ref{cqbe-110}. \\
$(2)$ Applying $(1)$, we get: \\ 
$\hspace*{2.00cm}$ $x\odot (y\Cup z)=(x\ra (y\Cup z)^*)^*=(x\ra (y^*\Cap z^*))^*$ \\
$\hspace*{4.00cm}$ $=((x\ra y^*)\Cap (x\ra z^*))^*=(x\ra y^*)^*\Cup (x\ra z^*)^*$ \\
$\hspace*{4.00cm}$ $=(x\odot y)\Cup (x\odot z)$. \\
$(3)$ By commutativity we have $y,z\le_Q y\Cup z$, so that $(y\Cup z)\ra x\le_Q y\ra x, z\ra x$.  
Applying Propositions \ref{qbe-60}$(9)$ and \ref{qbe-20}$(7)$, we get: \\ 
$\hspace*{2.00cm}$ $y\odot ((y\Cup z)\ra x)\le_Q y\odot (y\ra x)=x\Cap y$ and \\
$\hspace*{2.00cm}$ $z\odot ((y\Cup z)\ra x)\le_Q z\odot (z\ra x)=x\Cap z$. \\
Using Proposition \ref{qbe-20}$(7)$ and $(2)$, we have: \\
$\hspace*{1.00cm}$ $x\Cap (y\Cup z)=(y\Cup z)\odot ((y\Cup z)\ra x)=
    (y\odot ((y\Cup z)\ra x))\Cup (z\odot ((y\Cup z)\ra x))$ \\
$\hspace*{2.90cm}$ $\le_Q (x\Cap y)\Cup (x\Cap z)$. \\
On the other hand, $x\Cap y\le_Q y$, $x\Cap z\le_Q z$ imply $(x\Cap y)\Cup (x\Cap z)\le_Q y\Cup z$, and 
$x\Cap y\le_Q x$, $x\Cap z\le_Q x$ imply $(x\Cap y)\Cup (x\Cap z)\le_Q x$. 
Hence by Proposition \ref{qbe-90}, $(x\Cap y)\Cup (x\Cap z)\le_Q x\Cap (y\Cup z)$. 
Since $\mathcal{Z}(X)$ is a commutative bounded BCK algebra, the relation $\le_Q$ is antisymmetric, and 
we conclude that $x\Cap (y\Cup z)=(x\Cap y)\Cup (x\Cap z)$. \\
$(4)$ Applying $(3)$, we have: \\
$\hspace*{2.10cm}$ $x\Cup (y\Cap z)=(x^*\Cap (y\Cap z)^*)^*=(x^*\Cap (y^*\Cup z^*))^*$ \\
$\hspace*{4.00cm}$ $=((x^*\Cap y^*)\Cup (x^*\Cap z^*))^*=((x\Cup y)^*\Cup (x\Cup z)^*)^*$ \\
$\hspace*{4.00cm}$ $=(x\Cup y)\Cap (x\Cup z)$. 
\end{proof}

\begin{lemma} \label{lqbe-20} The following hold for all $x,y\in \mathcal{Z}(X)$: \\
$(1)$ $x\Cup y$ is the least upper bound (l.u.b.) of $\{x,y\};$ \\
$(2)$ $x\Cap y$ is the greatest lower bound (g.l.b.) of $\{x,y\}$. 
\end{lemma}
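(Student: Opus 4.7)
The plan is to leverage Theorem \ref{cqbe-160}: since $\mathcal{Z}(X)$ is a Wajsberg algebra (equivalently, a bounded commutative BCK algebra), the relations $\le$ and $\le_Q$ coincide on $\mathcal{Z}(X)$, and Proposition \ref{qbe-90} is available. I will prove (1) by verifying both the upper-bound property and the least property directly, and then obtain (2) from (1) by the standard involutive duality $x\Cap y=(x^*\Cup y^*)^*$.

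For (1), I would first establish that $x,y\le_Q x\Cup y$. By Lemma \ref{cqbe-20} (since $x,y\in \mathcal{Z}(X)$ commute), $x\Cup y=y\Cup x$, so $x\Cup y$ admits the two forms $(x\ra y)\ra y$ and $(y\ra x)\ra x$. Applying Proposition \ref{qbe-60}(2) (which gives $u\le_Q v\ra u$ for all $u,v$) with suitable substitutions yields $y\le_Q (x\ra y)\ra y=x\Cup y$ and $x\le_Q (y\ra x)\ra x=x\Cup y$. For the least upper bound property, if $x\le_Q z$ and $y\le_Q z$ with $z\in \mathcal{Z}(X)$, then Proposition \ref{qbe-90}(2) delivers $x\Cup y\le_Q z$ at once.

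For (2), I would obtain the lower bound property by dualizing (1). Since $\mathcal{Z}(X)$ is closed under $^*$ (Corollary \ref{cqbe-40-10}), we have $x^*,y^*\in \mathcal{Z}(X)$, and (1) applied to these elements gives $x^*\le_Q x^*\Cup y^*$ and $y^*\le_Q x^*\Cup y^*$. Proposition \ref{qbe-60}(6) then yields $(x^*\Cup y^*)^*\le_Q x$ and $(x^*\Cup y^*)^*\le_Q y$; but $(x^*\Cup y^*)^*=x\Cap y$ by the very definition of $\Cap$, so $x\Cap y\le_Q x$ and $x\Cap y\le_Q y$. The greatest lower bound property is then immediate from Proposition \ref{qbe-90}(1): if $w\le_Q x$ and $w\le_Q y$ for $w\in \mathcal{Z}(X)$, then $w\le_Q x\Cap y$.

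There is no substantial obstacle; the main care is purely bookkeeping — verifying that each invoked fact genuinely applies in $\mathcal{Z}(X)$, where commutativity of $\Cap$ and $\Cup$ and the coincidence $\le=\le_Q$ let us convert the general (non-commutative) QW identities of Propositions \ref{qbe-60} and \ref{qbe-20} into the lattice-theoretic meet/join conclusions supplied by Proposition \ref{qbe-90}.
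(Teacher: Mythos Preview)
Your proposal is correct, but the argument for the extremality (``least'' and ``greatest'') part differs from the paper's. You invoke Proposition~\ref{qbe-90}, which is legitimate since Theorem~\ref{cqbe-160} already identifies $\mathcal{Z}(X)$ as a bounded commutative BCK algebra; this packages the lattice property as a known fact and makes the proof very short. The paper instead argues more internally to Section~4: for (1) it uses the freshly established distributivity $z\Cap(x\Cup y)=(z\Cap x)\Cup(z\Cap y)$ (Proposition~\ref{lqbe-10}(3)) to compute $(x\Cup y)\Cap z=x\Cup y$ whenever $x,y\le_Q z$, and for (2) it uses the associativity-type identities of Proposition~\ref{cqbe-50} and Corollary~\ref{cqbe-60} to compute $z\Cap(x\Cap y)=z$ whenever $z\le_Q x,y$. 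Your route is shorter and conceptually cleaner; the paper's route is more self-contained and showcases the distributive law just proved.
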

\begin{proof}
$(1)$ By Corollaries \ref{cqbe-70} and \ref{cqbe-40-10}, $\mathcal{Z}(X)$ is closed under $\Cap$ and ${}^*$. 
Since $x\Cup y=(x^*\Cap y^*)^*$ for all $x,y\in \mathcal{Z}(X)$, it follows that $\mathcal{Z}(X)$ is also closed 
under $\Cup$.  
Since by commutativity $x,y\le_Q x\Cup y$, it follows that $x\Cup y$ is an upper bound of $\{x,y\}$. 
Let $z$ be another upper bound of $\{x,y\}$, so that $x,y\le_Q z$, that is $x=x\Cap z$ and $y=y\Cap z$. 
Using Proposition \ref{lqbe-10}$(3)$, we have 
$(x\Cup y)\Cap z=z\Cap (x\Cup y)=(z\Cap x)\Cup (z\Cap y)=(x\Cap z)\Cup (y\Cap z)=x\Cup y$. 
Hence $x\Cup y\le_Q z$, so that $x\Cup y$ is the l.u.b. of $\{x,y\}$. \\
$(2)$ By commutativity we also have $x\Cap y\le_Q x,y$, thus $x\Cap y$ is a lower bound of $\{x,y\}$. 
Let $z$ be another lower bound of $\{x,y\}$, so that $z\le_Q x,y$, that is $z=z\Cap x$ and $z=z\Cap y$. 
Using Proposition \ref{cqbe-50} and Corollary \ref{cqbe-60}, we have: 
$z\Cap (x\Cap y)=(x\Cap y)\Cap z=y\Cap (x\Cap z)=y\Cap (z\Cap x)=y\Cap z=z\Cap y=z$, that is $z\le_Q x\Cap y$. 
It follows that $x\Cap y$ is the g.l.b. of $\{x,y\}$. 
\end{proof}

\begin{theorem} \label{lqbe-20-10} $(\mathcal{Z}(X),\Cap,\Cup,0,1)$ is a distributive sublattice of the poset 
$(X,\le_Q,0,1)$. 
\end{theorem}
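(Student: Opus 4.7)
The plan is to show that this theorem is essentially a bookkeeping assembly of results already proved in the section, so the proof should be short and cite, not rederive. First I would verify closure: Corollary \ref{cqbe-70} gives closure of $\mathcal{Z}(X)$ under $\Cap$, and Corollary \ref{cqbe-40-10} gives closure under ${}^*$; since $x\Cup y=(x^*\Cap y^*)^*$ by definition in the involutive setting, this yields closure under $\Cup$ as well. Together with the fact (noted right after Definition \ref{cqbe-10-10}) that $0,1\in\mathcal{Z}(X)$, this makes $\mathcal{Z}(X)$ a subalgebra in the signature $(\Cap,\Cup,0,1)$.

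Next I would identify the lattice operations with the poset operations in $(X,\le_Q,0,1)$. By Lemma \ref{lqbe-20}, for all $x,y\in\mathcal{Z}(X)$ the element $x\Cap y$ is the greatest lower bound and $x\Cup y$ is the least upper bound of $\{x,y\}$ with respect to $\le_Q$. Since $\le_Q$ restricted to $\mathcal{Z}(X)$ is a partial order (by Proposition \ref{qbe-20}(2) and Proposition \ref{qbe-80}(2), antisymmetry being automatic on the commutative part, or from Theorem \ref{cqbe-160} where $\mathcal{Z}(X)$ is a Wajsberg algebra so $\le$ and $\le_Q$ coincide), this exhibits $(\mathcal{Z}(X),\Cap,\Cup,0,1)$ as a sublattice of the poset $(X,\le_Q,0,1)$ in the usual sense.

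Finally, distributivity is immediate from Proposition \ref{lqbe-10}(3),(4), which states
\[
x\Cap(y\Cup z)=(x\Cap y)\Cup(x\Cap z),\qquad x\Cup(y\Cap z)=(x\Cup y)\Cap(x\Cup z)
\]
for all $x,y,z\in\mathcal{Z}(X)$. Either identity alone suffices to conclude distributivity of the lattice, and both have already been established.

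I do not anticipate any real obstacle: every required ingredient (closure under the two binary operations, identification of $\Cap,\Cup$ with meet and join in $\le_Q$, and the two distributive laws) has been proved in the preceding lemmas and propositions. The only mildly delicate point worth checking is that $\le_Q$ is genuinely antisymmetric on $\mathcal{Z}(X)$ so that l.u.b.\ and g.l.b.\ are unique, but this follows either from Proposition \ref{qbe-20}(2) combined with $\le_Q$ being transitive on all of $X$, or more directly from Theorem \ref{cqbe-160}, since on a Wajsberg algebra $\le$ and $\le_Q$ coincide and $\le$ is a partial order.
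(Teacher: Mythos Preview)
Your proposal is correct and follows essentially the same approach as the paper, which simply invokes Lemma~\ref{lqbe-20}, Theorem~\ref{cqbe-160}, and Proposition~\ref{lqbe-10}. The only cosmetic difference is that you establish closure via Corollaries~\ref{cqbe-70} and~\ref{cqbe-40-10} directly, whereas the paper packages this into the citation of Theorem~\ref{cqbe-160}; also, your caution about antisymmetry of $\le_Q$ is unnecessary since it is already a partial order on all of $X$ (as noted after Proposition~\ref{qbe-80}).
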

\begin{proof} It follows by Lemma \ref{lqbe-20}, Theorem \ref{cqbe-160} and Proposition \ref{lqbe-10}. 
\end{proof}

\begin{proposition} \label{lqbe-30} The following hold for all $x,y,z\in \mathcal{Z}(X)$: \\
$(1)$ $x\ra (y\Cup z)=(x\ra y)\Cup (x\ra z)$ $($distributivity of $\ra$ over $\Cup);$ \\
$(2)$ $x\odot (y\Cap z)=(x\odot y)\Cap (x\Cup z)$ $($distributivity of $\odot$ over $\Cap);$ \\
$(3)$ $(y\Cup z)\ra x=(y\ra x)\Cap (z\ra x);$ \\
$(4)$ $(y\Cap z)\ra x=(y\ra x)\Cup (z\ra x)$.   
\end{proposition}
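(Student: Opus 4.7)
The plan is to prove (2) as the core step, from which (1) will follow by a short De Morgan manipulation via the $\Psi$-transformation $x\ra u=(x\odot u^*)^*$, and then (3) and (4) will drop out from Proposition \ref{lqbe-10}(1) and from (1) respectively by applying Lemma \ref{qbe-10}(6). Throughout I use that $\mathcal{Z}(X)$ is a Wajsberg algebra (Theorem \ref{cqbe-160}), hence a distributive lattice under $\le_Q=\le$ in which $\Cap$ and $\Cup$ are commutative (Lemma \ref{cqbe-20}) and related by the involutive De Morgan laws $(a\Cap b)^*=a^*\Cup b^*$ and $(a\Cup b)^*=a^*\Cap b^*$.

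Part (3) is a direct reduction to Proposition \ref{lqbe-10}(1): Lemma \ref{qbe-10}(6) and De Morgan rewrite $(y\Cup z)\ra x=x^*\ra(y^*\Cap z^*)$, and then \ref{lqbe-10}(1) gives $(x^*\ra y^*)\Cap(x^*\ra z^*)=(y\ra x)\Cap(z\ra x)$.

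For (2), the inequality $x\odot(y\Cap z)\le_Q(x\odot y)\Cap(x\odot z)$ is immediate from monotonicity of $\odot$ (Proposition \ref{qbe-60}(9)); the reverse inequality is the heart of the argument. Since $x\odot y,x\odot z\le_Q x$, one has $(x\odot y)\Cap(x\odot z)\le_Q x$, so Proposition \ref{qbe-20}(7) together with commutativity yields
\[
(x\odot y)\Cap(x\odot z)=x\odot\bigl(x\ra\bigl((x\odot y)\Cap(x\odot z)\bigr)\bigr).
\]
Distributing $\ra$ over $\Cap$ by Proposition \ref{lqbe-10}(1) and applying the Wajsberg identity $x\ra(x\odot u)=x^*\Cup u$ (derived by two applications of Lemma \ref{qbe-10}(3) to $x\ra(x\ra u^*)^*$ and commutativity of $\Cup$) reduces the inner expression to $(x^*\Cup y)\Cap(x^*\Cup z)$; Proposition \ref{lqbe-10}(4) collapses this further to $x^*\Cup(y\Cap z)$. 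Finally Proposition \ref{lqbe-10}(2) and $x\odot x^*=0$ rewrite $x\odot(x^*\Cup(y\Cap z))$ as $(x\odot x^*)\Cup(x\odot(y\Cap z))=x\odot(y\Cap z)$, completing (2).

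Part (1) then follows from (2) by De Morgan and the $\Psi$-transformation:
\[
(x\ra y)\Cup(x\ra z)=\bigl((x\odot y^*)\Cap(x\odot z^*)\bigr)^*=\bigl(x\odot(y^*\Cap z^*)\bigr)^*=\bigl(x\odot(y\Cup z)^*\bigr)^*=x\ra(y\Cup z),
\]
and (4) is immediate from (1) by Lemma \ref{qbe-10}(6): $(y\Cap z)\ra x=x^*\ra(y^*\Cup z^*)=(x^*\ra y^*)\Cup(x^*\ra z^*)=(y\ra x)\Cup(z\ra x)$. The main obstacle is the reverse inequality in (2): parts (1), (2), and (4) turn out to be pairwise equivalent under De Morgan and variable substitution, so at least one of them must be proved by a route that genuinely uses the Wajsberg identity $x\ra(x\odot u)=x^*\Cup u$, which is the step where the MV-algebraic structure of $\mathcal{Z}(X)$ enters beyond the distributive-lattice structure supplied by Theorem \ref{lqbe-20-10}.
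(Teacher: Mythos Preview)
Your proof is correct. The derivations of $(3)$ and $(4)$ coincide with the paper's, but your route to $(1)$ and $(2)$ is genuinely different. The paper proves $(1)$ first by a lattice-theoretic least-upper-bound argument: it observes that $x\ra(y\Cup z)$ is an upper bound of $\{x\ra y,\,x\ra z\}$, takes an arbitrary upper bound $u$, uses $u\odot x\ge_Q x\Cap y$ and $u\odot x\ge_Q x\Cap z$ together with Proposition~\ref{lqbe-10}$(3)$ and $(QW_1)$ to conclude $x\ra(y\Cup z)\le_Q u$; then $(2)$ is read off from $(1)$ by De Morgan. You instead prove $(2)$ first by an explicit algebraic computation: writing $w=(x\odot y)\Cap(x\odot z)$, you use $w\le_Q x$ and Proposition~\ref{qbe-20}$(7)$ to get $w=x\odot(x\ra w)$, then Proposition~\ref{lqbe-10}$(1)$, the Wajsberg identity $x\ra(x\odot u)=u\Cup x^*$ (valid in $\mathcal{Z}(X)$ via Lemma~\ref{qbe-10}$(3)$ twice), Proposition~\ref{lqbe-10}$(4)$, and finally Proposition~\ref{lqbe-10}$(2)$ with $x\odot x^*=0$ to collapse everything to $x\odot(y\Cap z)$; then $(1)$ follows by De Morgan. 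Your chain of equalities in fact yields $(2)$ outright, so the preliminary ``easy inequality'' via Proposition~\ref{qbe-60}$(9)$ is not actually needed. The paper's approach highlights the order-theoretic content (l.u.b.\ characterisation), while yours stays purely equational and makes visible the role of the MV identity $x\ra(x\odot u)=x^*\Cup u$; both rely on the distributivity already established in Proposition~\ref{lqbe-10}.
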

\begin{proof}
$(1)$ Since by commutativity $y\Cup z\ge_Q y,z$, we have $x\ra (y\Cup z)\ge_Q x\ra y, x\ra z$, so that 
$x\ra (y\Cup z)$ is an upper bound of $\{x\ra y, x\ra z\}$. 
Let $u$ be another upper bound of $\{x\ra y, x\ra z\}$, that is $u\ge_Q x\ra y, x\ra z$. 
It follows that $u\odot x\ge_Q (x\ra y)\odot x=x\Cap y$ and $u\odot x\ge_Q (x\ra z)\odot x=x\Cap z$.  
Hence, by Proposition \ref{lqbe-10}$(3)$, $x\Cap (y\Cup z)=(x\Cap y)\Cup (x\Cap z)\le_Q u\odot x$. 
Using $(QW_1)$, we get 
$x\ra (y\Cup z)=x\ra (x\Cap (y\Cup z))\le_Q x\ra (u\odot x)=x\ra (u\ra x^*)^*=(u\ra x^*)\ra x^*=u\Cup x^*=u$ 
(since $u\ge_Q x\ra y\ge_Q x^*)$. 
Thus $x\ra (y\Cup z)$ is the least upper bound of $\{x\ra y, x\ra z\}$, and so 
$x\ra (y\Cup z)=(x\ra y)\Cup (x\ra z)$. \\
$(2)$ Using $(1)$, we have: \\
$\hspace*{2.00cm}$ $x\odot (y\Cap z)=(x\ra (y\Cap z)^*)^*=(x\ra (y^*\Cup z^*))^*$ \\
$\hspace*{4.00cm}$ $=((x\ra y^*)\Cup (x\ra z^*))^*=((x\odot y)^*\Cup (x\odot z)^*)^*$ \\
$\hspace*{4.00cm}$ $=(x\odot y)\Cap (x\odot z)$. \\
$(3)$ Applying Proposition \ref{lqbe-10}$(1)$, we have: \\
$\hspace*{2.00cm}$ $(y\Cup z)\ra x=x^*\ra (y\Cup z)^*=x^*\ra (y^*\Cap z^*)$ \\
$\hspace*{4.00cm}$ $=(x^*\ra y^*)\Cap (x^*\ra z^*)=(y\ra x)\Cap (z\ra x)$. \\
$(4)$ By $(1)$, we get: \\
$\hspace*{2.00cm}$ $(y\Cap z)\ra x=x^*\ra (y\Cap z)^*=x^*\ra (y^*\Cup z^*)$ \\
$\hspace*{4.00cm}$ $=(x^*\ra y^*)\Cup (x^*\ra z^*)=(y\ra x)\Cup (z\ra x)$.  
\end{proof}

\begin{proposition} \label{lqbe-40} The following hold for all $x,y,z\in \mathcal{Z}(X)$: \\
$(1)$ $(x\Cup y)\ra (x\Cup z)\ge_Q x\Cup (y\ra z);$ \\
$(2)$ $(x\Cap y)\ra (x\Cap z)\ge_Q x\Cap (y\ra z)$. 
\end{proposition}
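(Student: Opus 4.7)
The plan is to work inside $\mathcal{Z}(X)$, which by Theorem~\ref{cqbe-160} is a Wajsberg subalgebra of $X$; consequently $\le$ and $\le_Q$ coincide on $\mathcal{Z}(X)$, the operations $\Cup$ and $\Cap$ are the lattice join and meet there (Lemma~\ref{lqbe-20}), and the residuation law $u\odot v\le w \Leftrightarrow u\le v\ra w$ is available through Lemma~\ref{qbe-10}(8) together with involutivity.

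The crucial intermediate fact I would establish is the pair of monotonicity-style inequalities
\[
a\ra b \;\le\; (a\Cup c)\ra (b\Cup c), \qquad a\ra b \;\le\; (a\Cap c)\ra (b\Cap c),
\]
valid for all $a,b,c\in\mathcal{Z}(X)$. For the first, residuation reduces the task to showing $(a\ra b)\odot(a\Cup c)\le b\Cup c$; distributivity of $\odot$ over $\Cup$ (Proposition~\ref{lqbe-10}(2)) splits the left-hand side as $((a\ra b)\odot a)\Cup((a\ra b)\odot c)$, where the first summand equals $a\Cap b\le b\le b\Cup c$ by Proposition~\ref{qbe-20}(7), and the second satisfies $(a\ra b)\odot c\le c\le b\Cup c$ (the bound $u\odot c\le c$ being immediate from $u\le 1=c\ra c$ and residuation). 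The second inequality is analogous: $(a\ra b)\odot(a\Cap c)$ is dominated both by $(a\ra b)\odot a=a\Cap b\le b$ and by $c$, hence by the meet $b\Cap c$.

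To obtain $(1)$, I would apply the first inequality above with $a=y$, $b=z$, $c=x$, using commutativity of $\Cup$ on $\mathcal{Z}(X)$, to get $y\ra z\le (x\Cup y)\ra(x\Cup z)$. Separately, Lemma~\ref{qbe-10}(1) gives $x\le (x\Cup y)\ra x$, which combined with $x\le x\Cup z$ and monotonicity of $\ra$ in the second argument yields $x\le (x\Cup y)\ra(x\Cup z)$; since $x\Cup(y\ra z)$ is the least upper bound of $\{x,y\ra z\}$ in $\mathcal{Z}(X)$ (Lemma~\ref{lqbe-20}(1)), the conclusion follows. For $(2)$, the second half of the key lemma directly gives $y\ra z\le (x\Cap y)\ra(x\Cap z)$, and composing with $x\Cap(y\ra z)\le y\ra z$ (since $\Cap$ is the meet) finishes the argument by transitivity of $\le_Q$. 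I do not expect a serious obstacle here; the only care required is to make sure every invocation of distributivity, residuation and the lattice structure takes place inside the Wajsberg-center, where the preceding results guarantee these tools.
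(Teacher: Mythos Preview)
Your argument is correct, but it proceeds along a genuinely different line from the paper's. The paper's proof is a direct computation: it invokes Proposition~\ref{lqbe-30} (distributivity of $\ra$ over $\Cup$ and over $\Cap$ in both slots) to expand $(x\Cup y)\ra(x\Cup z)$ as $(y\ra x)\Cup(y\ra z)$ and then bounds this below by $x\Cup(y\ra z)$ via $y\ra x\ge_Q x$; the second item is handled symmetrically. Your route instead isolates the intermediate monotonicity inequalities $a\ra b\le (a\Cup c)\ra(b\Cup c)$ and $a\ra b\le (a\Cap c)\ra(b\Cap c)$ and derives them by residuation together with the distributivity of $\odot$ over $\Cup$ from Proposition~\ref{lqbe-10}(2); the desired conclusions then follow from the lattice description of $\Cup$ and $\Cap$ in Lemma~\ref{lqbe-20}. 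The trade-off is that the paper's argument is shorter once Proposition~\ref{lqbe-30} is in hand but relies on that rather substantial identity, whereas your approach avoids Proposition~\ref{lqbe-30} entirely, using only the earlier Proposition~\ref{lqbe-10} and the standard residuated-lattice toolkit available in any Wajsberg algebra---making it more portable and arguably more conceptual. Both are sound; just be explicit that monotonicity of $\odot$ and of $\ra$ in its second argument are being used, which is immediate once you are inside the Wajsberg subalgebra $\mathcal{Z}(X)$.
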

\begin{proof} 
$(1)$ Applying Proposition \ref{lqbe-30}, since $y\ra x\ge_Q x$ we get: \\
$\hspace*{1.00cm}$ $(x\Cup y)\ra (x\Cup z)=(x\ra (x\Cup z))\Cap (y\ra (x\Cup z))$ \\
$\hspace*{4.00cm}$ $=((x\ra x)\Cup (x\ra z))\Cap ((y\ra x)\Cup (y\ra z))$ \\
$\hspace*{4.00cm}$ $=(1\Cup (x\ra z))\Cap ((y\ra x)\Cup (y\ra z))$ \\
$\hspace*{4.00cm}$ $=1\Cap ((y\ra x)\Cup (y\ra z))$ \\
$\hspace*{4.00cm}$ $=(y\ra x)\Cup (y\ra z)\ge_Q x\Cup (y\ra z)$. \\
$(2)$ Similarly, using Proposition \ref{lqbe-10} we have: \\
$\hspace*{1.00cm}$ $(x\Cap y)\ra (x\Cap z)=(x\ra (x\Cap z))\Cup (y\ra (x\Cap z))$ \\
$\hspace*{4.00cm}$ $=((x\ra x)\Cap (x\ra z))\Cup ((y\ra x)\Cap (y\ra z))$ \\
$\hspace*{4.00cm}$ $=(1\Cap (x\ra z))\Cup ((y\ra x)\Cap (y\ra z))$ \\
$\hspace*{4.00cm}$ $=(x\ra z)\Cup ((y\ra x)\Cap (y\ra z))$ \\
$\hspace*{4.00cm}$ $\ge_Q (y\ra x)\Cap (y\ra z)\ge_Q x\Cap (y\ra z)$. 
\end{proof}

\begin{proposition} \label{lqbe-50} The following hold for all $x,y\in \mathcal{Z}(X)$: \\
$(1)$ $(x^*\odot y)\Cap (x\odot y^*)=0;$ \\
$(2)$ $(x\Cap x^*)\odot (y\Cap y^*)=0;$ \\
$(3)$ $x\Cap x^*\le_Q y\Cup y^*$.  
\end{proposition}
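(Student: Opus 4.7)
My plan is to prove the three parts in sequence, with (1) done by direct unfolding, (2) deduced from (1) by monotonicity of $\odot$, and (3) deduced from (2) by residuation inside the Wajsberg algebra $\mathcal{Z}(X)$.

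For (1), I would first rewrite each factor using the definition $a\odot b=(a\ra b^*)^*$ together with involution and Lemma~\ref{qbe-10}(6): this gives $x^*\odot y=(x^*\ra y^*)^*=(y\ra x)^*$ and $x\odot y^*=(x\ra y^{**})^*=(x\ra y)^*$. By the defining identity $u^*\Cap v^*=(u\Cup v)^*$ valid in any involutive BE algebra, the meet becomes
\[
(x^*\odot y)\Cap(x\odot y^*)=(y\ra x)^*\Cap(x\ra y)^*=\bigl((x\ra y)\Cup(y\ra x)\bigr)^*,
\]
which collapses to $1^*=0$ by Proposition~\ref{qbe-60}(4). Note that this argument does not actually use $x,y\in\mathcal{Z}(X)$; the restriction is just convenient for the statement.

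For (2), I would work inside $\mathcal{Z}(X)$, which is a Wajsberg subalgebra (Theorem~\ref{cqbe-160}) in which $\odot$ and $\Cap$ are commutative and $\Cap$ is the actual infimum for $\le_Q$ (Lemma~\ref{lqbe-20}(2)). Using Proposition~\ref{qbe-60}(9) together with this commutativity, $\odot$ is monotone in both slots relative to $\le_Q$. Since $x\Cap x^*\le_Q x, x^*$ and $y\Cap y^*\le_Q y, y^*$, two applications give
\[
(x\Cap x^*)\odot(y\Cap y^*)\le_Q x\odot y^* \quad\text{and}\quad (x\Cap x^*)\odot(y\Cap y^*)\le_Q x^*\odot y.
\]
The right-hand sides lie in $\mathcal{Z}(X)$ (closure of $\mathcal{Z}(X)$ under $\odot$ follows from Corollaries~\ref{cqbe-40-10} and \ref{cqbe-150}), so their $\le_Q$-infimum is $(x\odot y^*)\Cap(x^*\odot y)$, which equals $0$ by (1). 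Hence $(x\Cap x^*)\odot(y\Cap y^*)\le_Q 0$, forcing equality.

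For (3), I would invoke the residuation identity in the Wajsberg algebra $\mathcal{Z}(X)$: $a\odot b=0$ iff $a\le_Q b^*$ (which follows at once from $a\odot b=(a\ra b^*)^*$ together with $\le=\le_Q$ on $\mathcal{Z}(X)$). Applied to (2) and using the De Morgan identity $(y\Cap y^*)^*=y^*\Cup y^{**}=y^*\Cup y$ and commutativity of $\Cup$ on $\mathcal{Z}(X)$, one obtains
\[
x\Cap x^*\le_Q (y\Cap y^*)^*=y\Cup y^*,
\]
which is (3). There is no substantial obstacle here; the only bookkeeping point is to keep track that all intermediate elements remain in $\mathcal{Z}(X)$ so that the Wajsberg-algebra machinery (commutativity of $\odot,\Cap,\Cup$, g.l.b./l.u.b.\ descriptions, residuation) is available, which is guaranteed by Corollaries~\ref{cqbe-40-10}, \ref{cqbe-70}, and \ref{cqbe-150}.
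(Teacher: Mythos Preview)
Your argument is correct. Parts~(1) and~(3) match the paper's proof essentially verbatim: the paper also unfolds $x^*\odot y=(y\ra x)^*$ and $x\odot y^*=(x\ra y)^*$, applies the De~Morgan identity, and invokes Proposition~\ref{qbe-60}(4); and for~(3) it likewise passes from $(x\Cap x^*)\odot(y\Cap y^*)=0$ to $(x\Cap x^*)\ra(y\Cup y^*)=1$ and uses $\le=\le_Q$ on $\mathcal{Z}(X)$.

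The only genuine divergence is in~(2). The paper expands $(x\Cap x^*)\odot(y\Cap y^*)$ using the distributivity of $\odot$ over $\Cap$ (Proposition~\ref{lqbe-30}(2)) into a four-term $\Cap$-product $(x\odot y)\Cap(x^*\odot y)\Cap(x\odot y^*)\Cap(x^*\odot y^*)$, in which the middle pair is~$0$ by~(1). Your route instead uses only monotonicity of $\odot$ (Proposition~\ref{qbe-60}(9), with commutativity of $\odot$) together with the g.l.b.\ description of $\Cap$ on $\mathcal{Z}(X)$ (Lemma~\ref{lqbe-20}(2) or Proposition~\ref{qbe-90}(1)) to bound $(x\Cap x^*)\odot(y\Cap y^*)$ below $(x^*\odot y)\Cap(x\odot y^*)=0$. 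Your argument is slightly more elementary in that it avoids appealing to Proposition~\ref{lqbe-30}(2); the paper's version is more explicitly algebraic and makes the symmetry of the four cross-products visible. One minor slip: when you apply $u^*\Cap v^*=(u\Cup v)^*$ in~(1), the order should give $((y\ra x)\Cup(x\ra y))^*$ rather than $((x\ra y)\Cup(y\ra x))^*$, but Proposition~\ref{qbe-60}(4) with the roles of $x,y$ swapped still yields~$1$, so the conclusion is unaffected.
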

\begin{proof} 
$(1)$ Applying Proposition \ref{qbe-60}$(4)$, we get: \\
$\hspace*{2.00cm}$ $(x^*\odot y)\Cap (x\odot y^*)=(x^*\ra y^*)^*\Cap (x\ra y)^*=(y\ra x)^*\Cap (x\ra y)^*$ \\
$\hspace*{5.25cm}$ $=((y\ra x)\Cup (x\ra y))^*=1^*=0$. \\
$(2)$ By distributivity of $\odot$ over $\Cap$ and using $(1)$, we have: \\
$\hspace*{2.00cm}$ $(x\Cap x^*)\odot (y\Cap y^*)=((x\Cap x^*)\odot y)\Cap ((x\Cap x^*)\odot y^*)$ \\
$\hspace*{5.25cm}$ $=(x\odot y)\Cap (x^*\odot y)\Cap (x\odot y^*)\Cap (x^*\odot y^*)$ \\
$\hspace*{5.25cm}$ $=(x\odot y)\Cap 0\Cap (x^*\odot y^*)=0$. \\
$(3)$ Using $(2)$, we get: \\
$\hspace*{1.00cm}$ $(x\Cap x^*)\ra (y\Cup y^*)=((x\Cap x^*)\odot (y\Cup y^*)^*)^*
                    =((x\Cap x^*)\odot (y\Cap y^*))^*=0^*=1$. \\
Since $\le_Q$ and $\le$ coincide in $\mathcal{Z}(X)$, it follows that $x\Cap x^*\le_Q y\Cup y^*$. 
\end{proof}

\begin{definition} \label{lqbe-60} 
\emph{
A \emph{Kleene algebra} is a structure $(L,\wedge,\vee,{}^*,0,1)$, where $(L,\wedge,\vee,0,1)$ is a bounded 
distributive lattice and ${}^*$ is a unary operation satisfying the following conditions for all $x,y\in L$: \\
$(K_1)$ $(x^*)^*=x;$ \\
$(K_2)$ $(x\vee y)^*=x^*\wedge y^*;$ \\
$(K_3)$ $x\wedge x^*\le y\vee y^*$. 
}
\end{definition}

\begin{theorem} \label{lqbe-70} $(\mathcal{Z}(X),\Cap,\Cup,{}^*,0,1)$ is a Kleene algebra. 
\end{theorem}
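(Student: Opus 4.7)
The proof will be a direct verification of the three Kleene axioms, plus the underlying bounded distributive lattice structure, using results already established in the paper.

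First I would invoke Theorem \ref{lqbe-20-10} to get that $(\mathcal{Z}(X),\Cap,\Cup,0,1)$ is a bounded distributive lattice. This takes care of the lattice-theoretic part of Definition \ref{lqbe-60}. I would also note that $\mathcal{Z}(X)$ is closed under ${}^*$ by Corollary \ref{cqbe-40-10}, so the unary operation in the statement is well defined.

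Next I would verify the three axioms in order. For $(K_1)$, involutivity of $X$ as a BE algebra gives $(x^*)^*=x$ for all $x\in X$, and in particular for all $x\in \mathcal{Z}(X)$. For $(K_2)$, I would use that in any involutive BE algebra the meet $\Cap$ is defined by $x\Cap y=(x^*\Cup y^*)^*$; replacing $x$ by $x^*$ and $y$ by $y^*$ and applying $(K_1)$ yields $(x\Cup y)^*=x^*\Cap y^*$ directly. For $(K_3)$, the statement $x\Cap x^*\le_Q y\Cup y^*$ is exactly Proposition \ref{lqbe-50}(3), and since $\mathcal{Z}(X)$ is a Wajsberg algebra (Theorem \ref{cqbe-160}), $\le_Q$ coincides with the lattice order $\le$ on $\mathcal{Z}(X)$, so this gives $(K_3)$ in the form required by Definition \ref{lqbe-60}.

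There is no real obstacle here; the work has been done in the preceding lemmas and propositions, and the proof is essentially a bookkeeping assembly: distributive bounded lattice from Theorem \ref{lqbe-20-10}, $(K_1)$ from involutivity, $(K_2)$ from De Morgan (via the definition of $\Cap$ in an involutive BE algebra), and $(K_3)$ from Proposition \ref{lqbe-50}(3). The only point that is perhaps worth explicit mention is that the inequality in $(K_3)$ is stated with respect to $\le$ rather than $\le_Q$, so one must recall that these two relations coincide on $\mathcal{Z}(X)$.
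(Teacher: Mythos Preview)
Your proposal is correct and follows essentially the same approach as the paper, which simply writes ``It follows from Theorem \ref{lqbe-20-10} and Proposition \ref{lqbe-50}$(3)$.'' You have spelled out the details the paper leaves implicit---closure under ${}^*$, and the verifications of $(K_1)$ and $(K_2)$---but the structure of the argument is identical.
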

\begin{proof}
It follows from Theorem \ref{lqbe-20-10} and Proposition \ref{lqbe-50}$(3)$. 
\end{proof}

$\vspace*{2mm}$

\section{The OML-center of quantum-Wajsberg algebras}

Given a quantum-Wajsberg algebra $X$, we define the OML-center $\mathcal{O}(X)$ of $X$, we study its properties, and  
show that $\mathcal{O}(X)$ is a subalgebra of $X$. 
We prove that $\mathcal{O}(X)$ is an orthomodular lattice, and the orthomodular lattices form a subvariety of the variety of quantum-Wajsberg algebras. 
In what follows, $(X,\ra,^*,1)$ will be a quantum-Wajsberg algebra, unless otherwise stated. 


\noindent 
Denote $\mathcal{O}(X)=\{x\in X\mid x=x^*\ra x\}$. 
Obviously $0,1\in \mathcal{O}(X)$. 

\begin{lemma} \label{oqbe-20} $\mathcal{O}(X)$ is closed under ${}^*$ and $\ra$. 
\end{lemma}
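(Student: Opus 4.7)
The plan is to reduce both closure properties to the identity in Lemma \ref{qbe-10}$(9)$, namely $(x^*\ra y)^*\ra (x^*\ra y)=(x^*\ra x)^*\ra (y^*\ra y)$, which is perfectly tailored to the defining condition $z=z^*\ra z$ of $\mathcal{O}(X)$.

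First I would handle closure under ${}^*$ directly from the definition. If $x\in\mathcal{O}(X)$, so that $x=x^*\ra x$, then Proposition \ref{qbe-80}$(8)$ (with $y$ taken to be $x^*$) immediately yields $x^*=x\ra x^*=(x^*)^*\ra x^*$, proving $x^*\in\mathcal{O}(X)$.

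Next, for closure under $\ra$, take $x,y\in\mathcal{O}(X)$. I would first establish that $x^*\ra y\in\mathcal{O}(X)$: applying Lemma \ref{qbe-10}$(9)$, the right-hand side becomes $(x^*\ra x)^*\ra (y^*\ra y)=x^*\ra y$, using $x^*\ra x=x$ (hence $(x^*\ra x)^*=x^*$) and $y^*\ra y=y$. This shows $(x^*\ra y)^*\ra(x^*\ra y)=x^*\ra y$, so $x^*\ra y\in\mathcal{O}(X)$. To pass from $x^*\ra y$ to $x\ra y$, I would invoke the first step: since $x\in\mathcal{O}(X)$ yields $x^*\in\mathcal{O}(X)$, applying the just-proved fact with $x$ replaced by $x^*$ and using involutivity $(x^*)^*=x$ gives $x\ra y=(x^*)^*\ra y\in\mathcal{O}(X)$.

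The only subtle point, and what I'd flag as the main obstacle, is recognizing that Lemma \ref{qbe-10}$(9)$ is exactly the right tool: the defining condition for membership in $\mathcal{O}(X)$ is $z=z^*\ra z$, and that lemma computes $z^*\ra z$ for expressions of the form $z=x^*\ra y$ in a way that factors through $x^*\ra x$ and $y^*\ra y$. Once this identification is made, closure under $\ra$ is a one-line substitution, and closure under ${}^*$ follows from the symmetry $x=y\ra x\iff y=x\ra y$ in Proposition \ref{qbe-80}$(8)$.
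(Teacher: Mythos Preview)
Your proof is correct and follows essentially the same approach as the paper: closure under ${}^*$ via Proposition~\ref{qbe-80}$(8)$, then closure under $\ra$ by first showing $x^*\ra y\in\mathcal{O}(X)$ through Lemma~\ref{qbe-10}$(9)$ and bootstrapping to $x\ra y$ using the already-established closure under ${}^*$. The paper's argument is identical in structure and in the choice of cited results.
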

\begin{proof}
If $x\in \mathcal{O}(X)$, then $x=x^*\ra x$, and by Proposition \ref{qbe-80}$(8)$, we get 
$x^*=x\ra x^*=(x^*)^*\ra x^*$, hence $x^*\in \mathcal{O}(X)$. 
Let $x,y\in \mathcal{O}(X)$, that is $x=x^*\ra x$ and $y=y^*\ra y$. 
By Lemma \ref{qbe-10}$(9)$, we have $(x^*\ra y)^*\ra (x^*\ra y)=(x^*\ra x)^*\ra (y^*\ra y)=x^*\ra y$, 
thus $x^*\ra y\in \mathcal{O}(X)$. 
Finally, from $x^*,y\in \mathcal{O}(X)$, we get $x\ra y\in \mathcal{O}(X)$. 
Hence $\mathcal{O}(X)$ is closed under ${}^*$ and $\ra$.
\end{proof}

\begin{corollary} \label{oqbe-30} The following hold: \\
$(1)$ $\mathcal{O}(X)=\{x\in X\mid x^*=x\ra x^*\};$ \\
$(2)$ $(\mathcal{O}(X),\ra,0,1)$ is a subalgebra of $(X,\ra,0,1);$ \\
$(3)$ $\mathcal{O}(X)$ is closed under $\Cap$, $\Cup$ and $\odot$. 
\end{corollary}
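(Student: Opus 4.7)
The plan is to derive all three parts as bookkeeping consequences of Lemma \ref{oqbe-20}, together with Proposition \ref{qbe-80}$(8)$ for part (1). The substantive closure work has already been done inside Lemma \ref{oqbe-20} via Lemma \ref{qbe-10}$(9)$, so none of the three items should require new ideas.

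For part (1), I would specialize Proposition \ref{qbe-80}$(8)$, namely the equivalence $x = y\ra x$ iff $y = x\ra y$, to $y := x^*$. Since $X$ is involutive, this immediately gives $x = x^*\ra x$ iff $x^* = x\ra x^*$, so the two descriptions of $\mathcal{O}(X)$ coincide. The only delicate point is remembering that this equivalence uses $x^{**}=x$, which is available because QW algebras are involutive BE algebras.

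For part (2), I would first check by direct computation that $0,1\in\mathcal{O}(X)$: indeed $0^*\ra 0 = 1\ra 0 = 0$ and $1^*\ra 1 = 0\ra 1 = 1$. Combined with the closure of $\mathcal{O}(X)$ under ${}^*$ and $\ra$ established in Lemma \ref{oqbe-20}, this is exactly what it means for $\mathcal{O}(X)$ to be a subalgebra of $(X,\ra,0,1)$.

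For part (3), I would reduce each of the derived operations to ${}^*$ and $\ra$ and then iterate Lemma \ref{oqbe-20}: from $x\odot y=(x\ra y^*)^*$, $x\Cup y=(x\ra y)\ra y$, and $x\Cap y=((x^*\ra y^*)\ra y^*)^*$, closure of $\mathcal{O}(X)$ under $\Cap$, $\Cup$, and $\odot$ follows by successive applications of the closures under $\ra$ and ${}^*$ to elements of $\mathcal{O}(X)$. There is no genuine obstacle here; the main thing to be careful about is simply to apply the lemma at each nested step rather than try to verify $x\odot y\in\mathcal{O}(X)$, etc., from the defining equation $x=x^*\ra x$ directly.
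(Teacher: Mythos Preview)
Your proposal is correct and matches the paper's intended approach: the paper states Corollary \ref{oqbe-30} without proof, treating all three parts as immediate bookkeeping from Lemma \ref{oqbe-20}, Proposition \ref{qbe-80}$(8)$ (already used inside that lemma for closure under ${}^*$), and the observation $0,1\in\mathcal{O}(X)$ made just before the lemma. Your reductions of $\odot$, $\Cup$, $\Cap$ to iterated applications of $\ra$ and ${}^*$ are exactly what the corollary label is meant to signal.
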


\begin{proposition} \label{oqbe-40} $\mathcal{O}(X)=\{x\in X\mid x^*\Cup x=1\}=\{x\in X\mid x^*\Cap x=0\}$.  
\end{proposition}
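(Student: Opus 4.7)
The plan is to prove the double equality by establishing a chain of equivalences, first identifying $\mathcal{O}(X)$ with $\{x\in X\mid x^*\Cup x=1\}$, and then with $\{x\in X\mid x^*\Cap x=0\}$ via a De Morgan-type identity linking $\Cap$ and $\Cup$.

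For the first equality, I will unfold the definition $x^*\Cup x=(x^*\ra x)\ra x$. The forward direction is immediate: if $x=x^*\ra x$, then $x^*\Cup x=x\ra x=1$. For the converse, $x^*\Cup x=1$ rewrites as $x^*\ra x\le x$, and combining this with the standing inequality $x\le_Q x^*\ra x$ supplied by Proposition \ref{qbe-60}(2), I invoke Proposition \ref{qbe-80}(4), which says that $x\le_Q y$ together with $y\le x$ forces $x=y$. This yields $x=x^*\ra x$, that is, $x\in\mathcal{O}(X)$.

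For the second equality, the key computation is to unfold $x^*\Cap x$ using the definition $u\Cap v=((u^*\ra v^*)\ra v^*)^*$ with $u=x^*$ and $v=x$; involutivity collapses this to $x^*\Cap x=((x\ra x^*)\ra x^*)^*=(x\Cup x^*)^*$. Hence $x^*\Cap x=0$ is equivalent to $x\Cup x^*=1$. To close the loop with $x^*\Cup x=1$, the most economical route is to apply the equivalence already established in the previous paragraph to $x^*$ in place of $x$, which gives $x^*\in\mathcal{O}(X)\Leftrightarrow x\Cup x^*=1$; Lemma \ref{oqbe-20} (closure of $\mathcal{O}(X)$ under ${}^*$, together with involutivity) identifies $x^*\in\mathcal{O}(X)$ with $x\in\mathcal{O}(X)$, completing the chain.

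The only subtle point is the antisymmetry step in the first paragraph: the relation $\le$ is not a priori antisymmetric in a general BE algebra, so one cannot directly combine $x\le x^*\ra x$ (which would follow from $\le_Q$ by Proposition \ref{qbe-20}(1)) with $x^*\ra x\le x$ to conclude equality. The hybrid antisymmetry provided by Proposition \ref{qbe-80}(4), which mixes $\le_Q$ on one side and $\le$ on the other, is precisely the tool needed here; everything else is straightforward unfolding of the definitions of $\Cup$ and $\Cap$.
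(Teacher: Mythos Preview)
Your proof is correct. The first equality is handled exactly as in the paper: unfold $x^*\Cup x=(x^*\ra x)\ra x$, use Proposition~\ref{qbe-60}(2) for $x\le_Q x^*\ra x$, and conclude via the mixed antisymmetry of Proposition~\ref{qbe-80}(4).

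For the second equality there is a small difference worth noting. The paper simply writes ``Similarly'', which most naturally means rerunning the same argument with the dual characterization $x^*=x\ra x^*$ from Corollary~\ref{oqbe-30}(1): one computes $x^*\Cap x=((x\ra x^*)\ra x^*)^*$, and then the forward and backward implications mirror the first paragraph. You instead observe $x^*\Cap x=(x\Cup x^*)^*$ and reduce to the already-proved first equality applied to $x^*$, closing with Lemma~\ref{oqbe-20}. Your route is a genuine shortcut---it avoids repeating the $\le_Q$/$\le$ antisymmetry step---at the cost of invoking closure of $\mathcal{O}(X)$ under~$^*$, which the direct ``similar'' argument does not need. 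Both are clean; yours is marginally more economical.
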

\begin{proof}
If $x\in \mathcal{O}(X)$, then $x=x^*\ra x$, so that $x^*\Cup x=(x^*\ra x)\ra x=x\ra x=1$. 
Conversely, if $x^*\Cup x=1$, then $(x^*\ra x)\ra x=1$, that is $x^*\ra x\le x$. 
Since by Proposition \ref{qbe-60}$(2)$, $x\le_Q x^*\ra x$, using Proposition \ref{qbe-80}$(4)$ we get $x=x^*\ra x$, 
that is $x\in \mathcal{O}(X)$. Similarly $\mathcal{O}(X)=\{x\in X\mid x^*\Cap x=0\}$.
\end{proof}

\begin{proposition} \label{oqbe-40-10} The following hold for all $x\in \mathcal{O}(X)$ and $y\in X$: \\  
$(1)$ $x\ra (x\ra y)=x\ra y;$ \\
$(2)$ $(x\ra y)\ra x=x;$ \\
$(3)$ $(y\ra x)^*\ra x=y\ra x;$ \\
$(4)$ $(y\ra x)^*\ra (y\ra x)=y\ra (y\ra x)$.  
\end{proposition}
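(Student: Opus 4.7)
The plan is to handle the four identities one at a time; each will reduce to a single application of a general involutive-BE identity from Lemma~\ref{qbe-10}, combined with the defining relation $x = x^* \ra x$ of $\mathcal{O}(X)$ (or the equivalent form $x \ra x^* = x^*$ supplied by Corollary~\ref{oqbe-30}(1)).

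For (1), I would apply Lemma~\ref{qbe-10}(8) with the inner ``$y$'' set to $x$, giving $x \ra (x \ra y) = (x \ra x^*)^* \ra y$; since $x \ra x^* = x^*$ and $x^{**}=x$, the right-hand side collapses to $x \ra y$. For (3), a single application of Lemma~\ref{qbe-10}(7) in the form $(a \ra b)^* \ra c = a \ra (b^* \ra c)$, with $a := y$, $b := x$, $c := x$, yields $(y \ra x)^* \ra x = y \ra (x^* \ra x) = y \ra x$.

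For (2), the strategy is to squeeze $(x \ra y) \ra x$ between $x$ and $x$ under the two orderings. Proposition~\ref{qbe-60}(2) gives $x^* \le_Q x \ra y$, and then contravariance of $\ra$ in its first slot (Proposition~\ref{qbe-60}(7)) yields $(x \ra y) \ra x \le_Q x^* \ra x = x$, in particular $(x \ra y) \ra x \le x$. The reverse inequality $x \le (x \ra y) \ra x$ is just Lemma~\ref{qbe-10}(1), and the mixed antisymmetry of Proposition~\ref{qbe-80}(4) closes the gap.

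For (4), both sides collapse to the common expression $(y \ra y^*)^* \ra x$. The right-hand side $y \ra (y \ra x)$ is rewritten directly by Lemma~\ref{qbe-10}(8). The left-hand side is rewritten by Lemma~\ref{qbe-10}(9), instantiated with $x \mapsto y^*$ and $y \mapsto x$ in the lemma's notation, giving $(y \ra y^*)^* \ra (x^* \ra x)$, which then collapses using $x^* \ra x = x$. No step looks genuinely hard; the only real care needed is the substitution into the two-variable identity~(9) in part (4), which must be arranged so that the factor $x^* \ra x$ lands on the atom $x \in \mathcal{O}(X)$ so that membership can be exploited.
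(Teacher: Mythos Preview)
Your proof is correct. Parts (1), (3), and (4) match the paper's argument almost exactly: the paper also reduces each to a single invocation of an involutive-BE identity from Lemma~\ref{qbe-10} (it uses (7) and $(BE_4)$ where you use (8), and its substitution into (9) for part (4) is $y\mapsto y^*$ rather than your $x\mapsto y^*,\ y\mapsto x$, but these are cosmetic variants landing on the same computation).

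The one genuine difference is part (2). You squeeze $(x\ra y)\ra x$ between $x$ and $x$ via $x^*\le_Q x\ra y$, contravariance, $x\le (x\ra y)\ra x$, and the mixed antisymmetry of Proposition~\ref{qbe-80}(4). The paper instead derives (2) in one line from (1): since $x\ra(x\ra y)=x\ra y$ says $x\ra u=u$ for $u:=x\ra y$, Proposition~\ref{qbe-80}(8) immediately flips this to $u\ra x=x$. The paper's route is shorter and avoids any order-theoretic reasoning; yours is self-contained and does not depend on having proved (1) first.
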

\begin{proof}
$(1)$ Using Lemma \ref{qbe-10}$(7)$, we get: $x\ra (x\ra y)=(x\ra y)^*\ra x^*=x\ra (y^*\ra x^*)=
y^*\ra (x\ra x^*)=y^*\ra x^*=x\ra y$. \\
$(2)$ It follows by $(1)$, applying Proposition \ref{qbe-80}$(8)$. \\
$(3)$ By Lemma \ref{qbe-10}$(7)$, $(y\ra x)^*\ra x=y\ra (x^*\ra x)=y\ra x$. \\
$(4)$ Replacing $y$ by $y^*$ in Lemma \ref{qbe-10}$(9)$ and taking into consideration that $x^*\ra x=x$, we get 
$(x^*\ra y^*)^*\ra (x^*\ra y^*)=x^*\ra (y\ra y^*)$, so that 
$(y\ra x)^*\ra (y\ra x)=y\ra (x^*\ra y^*)$. Hence $(y\ra x)^*\ra (y\ra x)=y\ra (y\ra x)$. 
\end{proof}

For any $x,y\in \mathcal{O}(X)$, define the operations: $x\Cup_L y=x^*\ra y$, $x\Cap_L y=x\odot y$ and the relation 
$x\le_L y$ iff $x^*\ra y=y$. 
One can easily check that $x\Cup_L y=(x^*\Cap_L y^*)^*$ and $x\Cap_L y=(x^*\Cup_L y^*)^*$. 

\begin{proposition} \label{oqbe-50} The following hold for all $x,y\in \mathcal{O}(X)$: \\
$(1)$ $\le_L={\le_Q}_{\mid \mathcal{O}(X)};$ \\
$(2)$ $x\Cup y\le_Q x\Cup_L y$ and $x\Cap_L y\le_Q x\Cap y;$ \\
$(3)$ $(x\Cup_L y)\ra x^*=x^*$ and $(x\Cap_L y)^*\ra x=x;$ \\
$(4)$ $(x\Cup_L y)^*\ra y=x\Cup_L y$ and $(x\Cap_L y)\ra y^*=(x\Cap_L y)^*;$ \\
$(5)$ $(\mathcal{O}(X),\Cap_L,\Cup_L,0,1)$ is a bounded lattice.    
\end{proposition}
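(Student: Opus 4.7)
The plan is to prove (1) first, since it identifies $\le_L$ with the restriction of $\le_Q$ to $\mathcal{O}(X)$ and is the key to (5); then to dispose of (2), (3), (4) as independent technical identities; and finally to assemble (5) using (1) together with Propositions \ref{qbe-60} and \ref{oqbe-40-10}. Throughout, I will exploit the defining equation $x = x^*\ra x$ of $\mathcal{O}(X)$, the monotonicity and dualisation clauses of Proposition \ref{qbe-60}, and the antisymmetry of $\le_Q$ recorded in Proposition \ref{qbe-80}(2).

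For (1), the implication $x\le_L y \Rightarrow x\le_Q y$ is immediate from Proposition \ref{qbe-60}(2), which gives $x\le_Q x^*\ra y = y$. For the converse, assuming $x\le_Q y$, Proposition \ref{qbe-60}(6) yields $y^*\le_Q x^*$, so Proposition \ref{qbe-60}(7) gives $x^*\ra y\le_Q y^*\ra y$; since $y\in \mathcal{O}(X)$ forces $y^*\ra y = y$, and since $y\le_Q x^*\ra y$ is automatic (apply Proposition \ref{qbe-60}(2) and rewrite via Lemma \ref{qbe-10}(5)), antisymmetry of $\le_Q$ closes the argument.

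For (2), the first inequality reduces to $(x\ra y)\ra y\le_Q x^*\ra y$, which will follow from Proposition \ref{qbe-60}(7) applied to the relation $x^*\le_Q x\ra y$ (itself an instance of Proposition \ref{qbe-60}(2)); the second inequality is then obtained by applying the first to $x^*, y^*$ and taking orthocomplements via Proposition \ref{qbe-60}(6), using that $\mathcal{O}(X)$ is closed under ${}^*$ (Lemma \ref{oqbe-20}). For (3) and (4), each identity is a direct instance of Proposition \ref{oqbe-40-10}: in (3), I will invoke clause (2) first with $x^*\in \mathcal{O}(X)$ to obtain $(x^*\ra y)\ra x^* = x^*$, then with $x\in \mathcal{O}(X)$ to obtain $(x\ra y^*)\ra x = x$; in (4), clause (3) with $y\in \mathcal{O}(X)$ yields $(x^*\ra y)^*\ra y = x^*\ra y$, and with $y^*\in \mathcal{O}(X)$ it yields $(x\ra y^*)^*\ra y^* = x\ra y^*$.

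For (5), part (1) guarantees that $\le_L$ is a partial order on $\mathcal{O}(X)$. To see that $x\Cup_L y = x^*\ra y$ is the join, the upper-bound clauses $x, y\le_L x\Cup_L y$ reduce by definition to $x^*\ra(x^*\ra y) = x^*\ra y$ (Proposition \ref{oqbe-40-10}(1)) and $y^*\ra(x^*\ra y) = x^*\ra y$ (swap via $(BE_4)$ and use $y^*\ra y = y$). For the least-upper-bound property, given $z\in \mathcal{O}(X)$ with $x, y\le_Q z$, the chain $x^*\ra y\le_Q x^*\ra z\le_Q z^*\ra z = z$, delivered by Propositions \ref{qbe-60}(6)--(7) together with $z\in \mathcal{O}(X)$, does the job. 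The meet assertion follows dually via the involution identity $x\Cap_L y = (x^*\Cup_L y^*)^*$, and the bounds $0\le_L x\le_L 1$ are immediate from $(BE_3)$ and $(BE_2)$. The main obstacle is part (1): the non-obvious move is to insert $y^*\ra y = y$ in order to trap $x^*\ra y$ between $y$ and itself; the rest is bookkeeping with monotonicity and the clean identities supplied by Proposition \ref{oqbe-40-10}.
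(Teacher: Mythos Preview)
Your arguments for (1)--(4) are correct and coincide with the paper's: the same monotonicity facts from Proposition~\ref{qbe-60}(2),(6),(7) together with $y^*\ra y=y$ and antisymmetry of $\le_Q$ handle (1); the inequality $x^*\le_Q x\ra y$ plus antitonicity handle (2); and the identities in (3),(4) are read off from Proposition~\ref{oqbe-40-10}(2),(3) after the appropriate substitutions, exactly as in the paper.

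For (5) you take a genuinely different route. The paper verifies the lattice axioms algebraically: commutativity and idempotence of $\Cup_L,\Cap_L$ are clear, associativity is checked via Lemma~\ref{qbe-10}(7), and the two absorption laws $x\Cup_L(x\Cap_L y)=x$ and $x\Cap_L(x\Cup_L y)=x$ are obtained from Proposition~\ref{oqbe-40-10}(2). You instead use (1) to import the partial order $\le_Q$ and then prove directly that $x\Cup_L y$ and $x\Cap_L y$ are the join and meet of $\{x,y\}$ in $(\mathcal{O}(X),\le_Q)$; this is precisely the content of the paper's next result, Proposition~\ref{oqbe-70}, which you have therefore folded into the proof of (5). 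Both approaches are valid: the paper's yields the absorption identities explicitly and defers the order-theoretic characterisation, while yours gets the lattice structure and the join/meet description in one stroke but leaves the absorption identities implicit (they of course follow once joins and meets exist).
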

\begin{proof}
$(1)$ Let $x,y\in \mathcal{O}(X)$ such that $x\le_Q y$. 
It follows that $y^*\le_Q x^*$ and $x^*\ra y\le_Q y^*\ra y=y$. 
On the other hand, $y\le_Q x^*\ra y$, hence $x^*\ra y=y$, that is $x\le_L y$. 
Conversely, if $x\le_L y$ we have $x\le_Q x^*\ra y=y$. 
Thus $\le_L={\le_Q}_{\mid \mathcal{O}(X)}$. \\ 
$(2)$ Since $x^*\le_Q x\ra y$, we have $(x\ra y)\ra y\le_Q x^*\ra y$, that is $x\Cup y\le_Q x\Cup_L y$. 
Similarly $x\le_Q x^*\ra y^*$ implies $(x^*\ra y^*)\ra y^*\le_Q x\ra y^*$.  
Hence $(x\ra y^*)^*\le_Q x\Cap y$, so that $x\odot y\le_Q x\Cap y$, that is $x\Cap_L y\le_Q x\Cap y$. \\
$(3)$ It follows from Proposition \ref{oqbe-40-10}$(2)$, replacing $x$ by $x^*$ and $y$ by $y^*$, respectively. \\
$(4)$ Since $y\in \mathcal{O}(X)$, by Proposition \ref{oqbe-40-10}$(3)$ we have $(x\ra y)^*\ra y=x\ra y$. 
Replacing $x$ by $x^*$ we get $(x\Cup_L y)^*\ra y=x\Cup_L y$, and replacing $y$ by $y^*$ we have 
$(x\Cap_L y)\ra y^*=(x\Cap_L y)^*$. \\
$(5)$ Clearly $\Cup_L$ and $\Cap_L$ are commutative and idempotent. 
Moreover, using Lemma \ref{qbe-10}$(7)$ we can easily check that $\Cup_L$ and $\Cap_L$ are associative. 
Finally, applying Proposition \ref{oqbe-40-10}$(2)$, we have: \\
$\hspace*{2cm}$ $x\Cup_L (x\Cap_L y)=x^*\ra (x\ra y^*)^*=(x\ra y^*)\ra x=x$, \\
$\hspace*{2cm}$ $x\Cap_L (x\Cup_L y)=(x\ra (x^*\ra y)^*)^*=((x^*\ra y)\ra x^*)^*=(x^*)^*=x$, \\
for all $x,y\in \mathcal{O}(X)$, hence $\Cup_L$ and $\Cap_L$ satisfy the absorption laws. 
Thus $(\mathcal{O}(X),\Cap_L,\Cup_L,0,1)$ is a bounded lattice. 
\end{proof}

\begin{corollary} \label{oqbe-60} The following hold for all $x,y\in \mathcal{O}(X)$: \\
$(1)$ $x\le_Q y$ iff $y=y\Cup_L x;$ \\
$(2)$ $x\Cup y=(x\ra y)^*\Cup_L y$. 
\end{corollary}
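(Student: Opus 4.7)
The plan is to handle the two parts essentially by unpacking definitions, leveraging the identifications proved just above in Proposition \ref{oqbe-50}.

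For part $(1)$, I would exploit Proposition \ref{oqbe-50}$(1)$, which says that $\le_L$ agrees with $\le_Q$ on $\mathcal{O}(X)$, together with the definition of $\le_L$ (namely $x\le_L y$ iff $x^*\ra y=y$) and the commutativity of $\Cup_L$ established in Proposition \ref{oqbe-50}$(5)$. The proof would be a short chain of equivalences: $x\le_Q y$ iff $x\le_L y$ iff $x^*\ra y=y$ iff $x\Cup_L y=y$ iff $y\Cup_L x=y$, where the last step is commutativity of $\Cup_L$. No auxiliary lemmas are needed beyond those.

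For part $(2)$, the first thing I would check is that the expression $(x\ra y)^*\Cup_L y$ is even well-defined, i.e., that $(x\ra y)^*\in \mathcal{O}(X)$. This follows from Lemma \ref{oqbe-20}: since $x,y\in \mathcal{O}(X)$, closure under $\ra$ gives $x\ra y\in \mathcal{O}(X)$, and closure under ${}^*$ gives $(x\ra y)^*\in \mathcal{O}(X)$. Once that is in hand, the identity is a one-line computation: by the definition $u\Cup_L v=u^*\ra v$ and involutivity ($u^{**}=u$), we have $(x\ra y)^*\Cup_L y=((x\ra y)^*)^*\ra y=(x\ra y)\ra y$, which equals $x\Cup y$ by the definition of $\Cup$ in the underlying BE algebra.

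There is no real obstacle here; the statement is a bookkeeping corollary whose whole purpose is to translate the $\Cup$ of the ambient QW algebra into the intrinsic join $\Cup_L$ of the OML-center, and vice versa. The only point to be careful about is to invoke well-definedness of $\Cup_L$ on the element $(x\ra y)^*$ before applying its defining formula, which is why I would mention the closure of $\mathcal{O}(X)$ under $\ra$ and ${}^*$ explicitly.
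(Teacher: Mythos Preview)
Your proposal is correct and mirrors the paper's own argument essentially verbatim: for $(1)$ the paper also passes through $\le_L$ and then uses $x^*\ra y=y^*\ra x$ (which is precisely the commutativity of $\Cup_L$ you invoke), and for $(2)$ the paper performs the same one-line unwinding of definitions. Your explicit check that $(x\ra y)^*\in\mathcal{O}(X)$ is a nice addition the paper leaves implicit.
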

\begin{proof} 
$(1)$ $x\le_Q y$ iff $x\le_L y$ iff $y=x^*\ra y=y^*\ra x=y\Cup_L x$. \\
$(2)$ $x\Cup y=(x\ra y)\ra y=((x\ra y)^*)^*\ra y=(x\ra y)^*\Cup_L y$. 
\end{proof}
\noindent
In what follows, if $x,y\in \mathcal{O}(X)$, we will use $x\le_Q y$ instead of $x\le_L y$. 

\begin{proposition} \label{oqbe-70} For any $x,y\in \mathcal{O}(X)$, $x\Cup_L y$ and $x\Cap_L y$ are the l.u.b. and 
g.l.b. of $\{x,y\}$, respectively. 
\end{proposition}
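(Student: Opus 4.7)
The plan is to show directly that $x\Cup_L y$ is the least upper bound of $\{x,y\}$ with respect to $\le_L$ (which coincides with $\le_Q$ on $\mathcal{O}(X)$ by Proposition \ref{oqbe-50}$(1)$), and then deduce the dual statement for $x\Cap_L y$ from the involution ${}^*$ together with the identity $x\Cap_L y=(x^*\Cup_L y^*)^*$.

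First I would verify that $x\Cup_L y$ is an upper bound. To see $x\le_L x\Cup_L y$ I need $(x^*)\ra(x^*\ra y)=x^*\ra y$, which is exactly Proposition \ref{oqbe-40-10}$(1)$ applied to $x^*\in \mathcal{O}(X)$ (using Lemma \ref{oqbe-20}). To see $y\le_L x\Cup_L y$ I need $y^*\ra(x^*\ra y)=x^*\ra y$; by $(BE_4)$ this rewrites as $x^*\ra(y^*\ra y)$, and since $y\in \mathcal{O}(X)$ we have $y^*\ra y=y$, giving $x^*\ra y$ as required.

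Next I would show minimality. Suppose $z\in \mathcal{O}(X)$ satisfies $x\le_L z$ and $y\le_L z$, i.e.\ $x^*\ra z=z$ and $y^*\ra z=z$. To conclude $x\Cup_L y\le_L z$ I need $(x^*\ra y)^*\ra z=z$. Using Lemma \ref{qbe-10}$(7)$ I rewrite
\[
(x^*\ra y)^*\ra z=x^*\ra(y^*\ra z)=x^*\ra z=z,
\]
which is the desired identity. Hence $x\Cup_L y$ is the l.u.b.\ of $\{x,y\}$ in $(\mathcal{O}(X),\le_Q)$.

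For the greatest lower bound statement I would invoke duality. Since $\mathcal{O}(X)$ is closed under ${}^*$ (Lemma \ref{oqbe-20}) and ${}^*$ is order-reversing on $X$ by Proposition \ref{qbe-60}$(6)$, a subset of $\mathcal{O}(X)$ has a g.l.b.\ iff the image under ${}^*$ has a l.u.b., and the two are interchanged by ${}^*$. Applying the already-proved case to $\{x^*,y^*\}$ and using the identity $x\Cap_L y=(x^*\Cup_L y^*)^*$ (noted just before Proposition \ref{oqbe-50}), I conclude that $x\Cap_L y$ is the g.l.b.\ of $\{x,y\}$. The only real manipulation is the two-line computation in the minimality step; the rest is formal, so I do not expect any serious obstacle.
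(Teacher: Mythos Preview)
Your proof is correct. Your route differs in flavor from the paper's, though both are short.

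For the upper-bound and minimality of $x\Cup_L y$, you work purely equationally with the definition $a\le_L b\iff a^*\ra b=b$, reducing everything to instances of Proposition~\ref{oqbe-40-10}$(1)$, axiom $(BE_4)$, and Lemma~\ref{qbe-10}$(7)$; this makes the computation self-contained and independent of any monotonicity facts about $\le_Q$. The paper instead argues in the order $\le_Q$: it quotes Proposition~\ref{qbe-60}$(2)$ to get $x,y\le_Q x^*\ra y$ immediately, and for minimality uses the monotonicity of $\ra$ in $\le_Q$ (Proposition~\ref{qbe-60}$(6)$,$(7)$) to show $x^*\ra y\le_Q x^*\ra z\le_Q z^*\ra z=z$. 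Your approach is arguably cleaner in that it avoids importing order-theoretic machinery, while the paper's is more conceptual. For the g.l.b.\ both you and the paper reduce to the l.u.b.\ case for $\{x^*,y^*\}$ via the involution, so the difference is only in presentation.
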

\begin{proof}
Obviously $x,y\le_Q x^*\ra y$, so that $x\Cup_L y$ is an upper bound of $\{x,y\}$.  
Let $z\in \mathcal{O}(X)$ be another upper bound of $\{x,y\}$ in $\mathcal{O}(X)$, that is $x,y\le_Q z$. 
It follows that $z^*\le_Q x^*$, so that $x^*\ra y\le_Q x^*\ra z\le_Q z^*\ra z=z$. 
Hence $x\Cup_L y\le_Q z$, that is $x\Cup_L y$ is the l.u.b. of $\{x,y\}$. 
Similarly $x\odot y\le_Q x,y$, thus $x\Cap_L y$ is a lower bound of $\{x,y\}$. 
Let $z\in \mathcal{O}(X)$ be another lower bound of $\{x,y\}$ in $\mathcal{O}(X)$, so that $z\le_Q x,y$. 
We get $x^*,y^*\le_Q z^*$, so that $z^*$ is an upper bound of $\{x^*,y^*\}$, hence $x^*\Cup_L y^*\le_Q z^*$, 
that is $x\ra y^*\le_Q z^*$. 
It follows that $z\le_Q (x\ra y^*)^*=x\odot y=x\Cap_L y$, thus $x\Cap_L y$ is the g.l.b. of $\{x,y\}$. 
\end{proof}

\begin{definition} \label{oqbe-80} $\rm($\cite{Burris}$\rm)$ 
\emph{
An algebra $(X,\wedge,\vee,{}^{'},0,1)$ with two binary, one unary and two nullary operations is an 
\emph{ortholattice} if it satisfies the following axioms for all $x,y,z\in X$: \\
$(Q_1)$ $(X,\wedge,\vee,0,1)$ is a bounded lattice; \\
$(Q_2)$ $x\wedge x^{'}=0$ and $x\vee x^{'}=1;$ \\
$(Q_3)$ $(x\wedge y)^{'}=x^{'}\vee y^{'}$ and $(x\vee y)^{'}=x^{'}\wedge y^{'};$ \\
$(Q_4)$ $(x^{'})^{'}=x$. \\
An \emph{orthomodular lattice} is an ortholattice satisfying the following axiom: \\
$(Q_5)$ $x\le y$ implies $x\vee (x^{'}\wedge y)=y$ (where $x\le y$ iff $x=x\wedge y$). 
}
\end{definition}

\begin{theorem} \label{oqbe-90} $(\mathcal{O}(X),\Cap_L,\Cup_L,{}^*,0,1)$ \emph{is an orthomodular lattice called 
the \emph{orthomodular center} or \emph{OML-center}} of $X$. 
\end{theorem}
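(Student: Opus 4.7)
The plan is to verify the five axioms $(Q_1)$--$(Q_5)$ of Definition~\ref{oqbe-80} for $(\mathcal{O}(X),\Cap_L,\Cup_L,{}^*,0,1)$. Axiom $(Q_1)$ is exactly Proposition~\ref{oqbe-50}(5), axiom $(Q_4)$ is the involutivity of $X$, and the De Morgan laws of $(Q_3)$ follow by applying involutivity to the two identities $x\Cup_L y=(x^*\Cap_L y^*)^*$ and $x\Cap_L y=(x^*\Cup_L y^*)^*$ recorded just before Proposition~\ref{oqbe-50}. For $(Q_2)$, a direct computation from the definitions yields $x\Cap_L x^*=x\odot x^*=(x\ra x)^*=1^*=0$ and $x\Cup_L x^*=x^*\ra x^*=1$ for every $x\in\mathcal{O}(X)$; note that these two identities actually hold for every $x\in X$, so no use of the defining property $x=x^*\ra x$ of $\mathcal{O}(X)$ is needed here.

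The substantive step is the orthomodular law $(Q_5)$: for $x,y\in\mathcal{O}(X)$ with $x\le_Q y$, one must show $x\Cup_L(x^*\Cap_L y)=y$. I would unfold the definitions and invoke the involutive arrow identities of Lemma~\ref{qbe-10}. Explicitly,
$$x\Cup_L(x^*\Cap_L y)=x^*\ra(x^*\odot y)=x^*\ra(x^*\ra y^*)^*=x^*\ra(y\ra x)^*$$
by Lemma~\ref{qbe-10}(6); another application of Lemma~\ref{qbe-10}(3) with $x$ replaced by $x^*$ and $y$ replaced by $y\ra x$ rewrites the right-hand side as $(y\ra x)\ra x^{**}=(y\ra x)\ra x=x\Cup y$. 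Finally, the hypothesis $x\le_Q y$ implies $x\le y$ by Proposition~\ref{qbe-20}(1), so $x\ra y=1$ and hence $x\Cup y=(x\ra y)\ra y=1\ra y=y$ by $(BE_3)$.

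The only conceptual subtlety is to check that the lattice order induced by $(\Cap_L,\Cup_L)$ on $\mathcal{O}(X)$ coincides with $\le_Q$ restricted to $\mathcal{O}(X)$; this is precisely Proposition~\ref{oqbe-50}(1) in conjunction with Proposition~\ref{oqbe-70}, so the inequality $x\le y$ appearing in the statement of orthomodularity is legitimately interpreted as the restriction of $\le_Q$ in the ambient quantum-Wajsberg algebra. Once this identification is in place, every verification reduces to a short arrow-calculus computation, and no further obstacle remains; the main (mild) difficulty is simply tracking the translation between the $\odot$/$\ra$ presentation of $\Cap_L$/$\Cup_L$ and the defining identity $x=x^*\ra x$ of elements of $\mathcal{O}(X)$.
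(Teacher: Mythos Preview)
Your approach coincides with the paper's: verify $(Q_1)$--$(Q_4)$ via Proposition~\ref{oqbe-50}(5), involutivity, the De~Morgan identities, and a direct computation for $(Q_2)$, then establish $(Q_5)$ by unfolding $x\Cup_L(x^*\Cap_L y)$ through the arrow calculus down to $(y\ra x)\ra x$ and invoking $x\le_Q y$. The paper's proof is line-for-line the same chain.

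There is one slip to fix. By definition $a\Cup b=(a\ra b)\ra b$, so $(y\ra x)\ra x=y\Cup x$, not $x\Cup y$ as you wrote; in a non-commutative QW algebra these need not agree. Consequently the final step should read $y\Cup x=y$, which is Proposition~\ref{qbe-60}(5) (this is exactly what the paper cites), rather than your computation of $x\Cup y$ via $x\ra y=1$. The latter computation is correct in itself (and indeed $x\Cup y=y$ also holds by Proposition~\ref{qbe-20}(1)), but it justifies the wrong expression. Once you correct $x\Cup y$ to $y\Cup x$ and cite Proposition~\ref{qbe-60}(5), the argument is complete and identical to the paper's.
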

\begin{proof}
Let $X$ be a quantum-Wajsberg algebra.
Using Propositions \ref{oqbe-50}, \ref{oqbe-70}, \ref{oqbe-40} we can easily check that $(\mathcal{O}(X),\Cap_L,\Cup_L,{}^*,0,1)$ is an ortholattice. We show that axiom $(Q_5)$ is also satisfied. 
Let $x,y\in \mathcal{O}(X)$ such that $x\le_Q y$, and we have: 
$x\Cup_L (x^*\Cap_L y)=x\Cup_L (x^*\odot y)=x\Cup_L (x^*\ra y^*)^*=x\Cup_L (y\ra x)^*=x^*\ra (y\ra x)^*=
(y\ra x)\ra x=y\Cup x=y$, since $x\le_Q y$. 
\end{proof}

\begin{theorem} \label{oqbe-90-10} If $(X,\wedge,\vee,{}^{'},0,1)$ is an orthomodular lattice, then $(X,\ra,0,1)$ is 
a quantum-Wajsberg algebra, where $x\ra y=x^{'}\vee y$ for all $x,y\in X$. 
\end{theorem}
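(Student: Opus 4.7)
The plan is to show that the structure $(X, \ra, 0, 1)$ with $x \ra y := x' \vee y$ satisfies all the axioms defining a quantum-Wajsberg algebra, by unwinding the QW-operations into OML-operations and invoking orthomodularity at the end. First I would note that $x^* = x \ra 0 = x' \vee 0 = x'$, so the QW-involution coincides with the ortho-complement; involutivity is then precisely $(Q_4)$. The BE axioms fall out of the ortholattice structure: $(BE_1)$ is $x' \vee x = 1$ by $(Q_2)$, $(BE_2)$ is $x' \vee 1 = 1$, $(BE_3)$ reads $1 \ra x = 0 \vee x = x$ since $1' = 0$, and $(BE_4)$ $x \ra (y \ra z) = x' \vee y' \vee z$ is symmetric in $x$ and $y$ by commutativity and associativity of $\vee$. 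Boundedness by $0$ is built in.

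Second, I would translate the derived operations $\Cap$ and $\Cup$ into pure OML language. A short De Morgan unwinding of the definitions gives $x \Cap y = y \wedge (x \vee y')$ and $x \Cup y = y \vee (x \wedge y')$, i.e. the usual Sasaki projections onto $y$ and onto $y'$. This is the bridge between the QW formalism and the OML identities.

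Finally I would verify axiom (QW), most efficiently through its equivalents $(QW_1)$ $x \ra (x \Cap y) = x \ra y$ and $(QW_2)$ $x \ra (y \Cap (z \Cap x)) = (x \ra y) \Cap (x \ra z)$. For $(QW_1)$, the $\le$ direction is trivial; the reverse is obtained by applying the orthomodular law to $y \wedge (x \vee y') \le y$, and combining it with the standard OML identity $a \wedge (a' \vee b) = b$ whenever $b \le a$ (itself a direct consequence of orthomodularity), to conclude $y = (y \wedge (x \vee y')) \vee (x' \wedge y)$; joining both sides with $x'$ then yields $x' \vee y = x' \vee (y \wedge (x \vee y'))$.

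The main obstacle is $(QW_2)$, which involves three nested Sasaki projections. I would handle it by iterating the same strategy: apply orthomodularity along the chain $(z \Cap x) \le x$ and $y \Cap (z \Cap x) \le z \Cap x$, together with De Morgan, to reduce both sides to a common OML expression in $x, y, z$. A more economical alternative is to leverage Proposition \ref{qmv-50}: under the transformation $\Phi$, the candidate structure becomes the involutive m-BE algebra with $x \odot y = (x \ra y^*)^* = x \wedge y$, in which the QMV axiom (Pqmv) reduces to a classical OML-identity going back to Giuntini; (QW) then transfers via the equivalence of Proposition \ref{qmv-50}. This second route bypasses the heaviest orthomodular bookkeeping and is the one I would actually write out if space were tight.
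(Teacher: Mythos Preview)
Your ``economical alternative'' is exactly what the paper does: it cites \cite[Thm.~2.3.9]{DvPu} for the fact that an orthomodular lattice with $\oplus=\vee$ and ${}^*={}'$ is a QMV algebra, then invokes \cite[Thm.~5.3]{Ciu78} (the QMV $\leftrightarrow$ QW equivalence, i.e.\ Proposition~\ref{qmv-50}) to obtain the QW structure with $x\ra y=x^*\oplus y=x'\vee y$. No direct verification of $(QW_1)$ or $(QW_2)$ is performed.

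Your first route---direct verification inside the OML---is a genuinely different approach. The BE/involutive part and the translation $x\Cap y=y\wedge(x\vee y')$, $x\Cup y=y\vee(x\wedge y')$ are correct, and your argument for $(QW_1)$ via orthomodularity is clean. The trade-off is that $(QW_2)$, which you only sketch (``iterate the same strategy''), is where the real orthomodular work lives; carrying it out amounts to reproving Giuntini's OML $\Rightarrow$ QMV result by hand. That buys self-containment but costs the bookkeeping the paper avoids by citation. Since you explicitly fall back on the QMV route for the write-up, your proposal is sound either way.
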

\begin{proof}
According to \cite[Thm. 2.3.9]{DvPu}, every orthomodular lattice $(X,\wedge,\vee,{}^{'},0,1)$ determines a QMV 
algebra by taking $\oplus$ as the supremum $\vee$ and ${}^*$ as the orthocomplement ${}^{'}$, and conversely, 
if an ortholattice $X$ determines a QMV algebra $(X,\oplus,^*,0,1)$ taking $\oplus=\vee$ and ${}^*={}^{'}$, 
then $X$ is orthomodular. 
By \cite[Thm. 5.3]{Ciu78}, any quantum-MV algebra $(X,\oplus,^*,0,1)$ is a quantum-Wajsberg algebra $(X,\ra,0,1)$, 
where $x\ra y=x^*\oplus y$. 
It follows that every orthomodular lattice $(X,\wedge,\vee,{}^{'},0,1)$ determines a quantum-Wajsberg algebra $(X,\ra,0,1)$ with $x\ra y=x^*\oplus y=x^*\vee y$ for all $x,y\in X$. 
\end{proof}

\begin{corollary} \label{oqbe-100} $(X,\Cap_L,\Cup_L,{}^*,0,1)$ is an orthomodular lattice if and only if $\mathcal{O}(X)=X$.  
\end{corollary}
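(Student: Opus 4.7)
The plan is to use the obvious direction from Theorem~\ref{oqbe-90} for sufficiency, and exploit idempotency of join for necessity. Both directions should be essentially immediate given the preceding development.

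For the sufficiency direction ($\mathcal{O}(X)=X \Rightarrow$ orthomodular lattice), I would just cite Theorem~\ref{oqbe-90} directly: it shows that $(\mathcal{O}(X),\Cap_L,\Cup_L,{}^*,0,1)$ is an orthomodular lattice, so under the hypothesis $\mathcal{O}(X)=X$, the structure $(X,\Cap_L,\Cup_L,{}^*,0,1)$ inherits this property without further work.

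For the necessity direction (orthomodular lattice $\Rightarrow \mathcal{O}(X)=X$), the key observation is that any lattice has idempotent join: if $(X,\Cap_L,\Cup_L,{}^*,0,1)$ is an orthomodular lattice, then in particular $(X,\Cap_L,\Cup_L,0,1)$ is a bounded lattice, and hence $x\Cup_L x = x$ for every $x \in X$. But by the defining formula $x\Cup_L y=x^*\ra y$, this says $x^*\ra x = x$ for all $x \in X$, which is precisely the condition $x \in \mathcal{O}(X)$. Therefore $\mathcal{O}(X)=X$, and we are done.

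There is no real obstacle here — the corollary is essentially a restatement of Theorem~\ref{oqbe-90} combined with the fact that idempotency of $\Cup_L$ is logically equivalent to membership in $\mathcal{O}(X)$, which was already recorded via the characterization in Proposition~\ref{oqbe-40}. The whole proof should fit in two or three lines, amounting to: apply Theorem~\ref{oqbe-90} in one direction, and unfold the definition $x\Cup_L x = x^*\ra x$ together with idempotency of join in the other.
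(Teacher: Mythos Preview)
Your proposal is correct and matches the paper's (implicit) approach: the paper gives no separate proof for this corollary, treating it as immediate from Theorem~\ref{oqbe-90}, and the necessity argument you give (idempotency of $\Cup_L$ forces $x^*\ra x = x$) is precisely what the paper spells out in the proof of the next result, Corollary~\ref{oqbe-110}. The only minor point to note is that $\Cup_L$ and $\Cap_L$ were introduced only on $\mathcal{O}(X)$, so the statement tacitly extends these via the same formulas $x\Cup_L y = x^*\ra y$ and $x\Cap_L y = x\odot y$ to all of $X$; you handle this correctly.
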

\noindent
Similarly as \cite[Cor. 2.3.13]{DvPu} for the case of QMV algebras, we have the following result. 

\begin{corollary} \label{oqbe-110} The orthomodular lattices form a subvariety of the variety of quantum-Wajsberg 
algebras. This subvariety satisfies the condition $x=x^*\ra x$, or equivalently, $x^*\Cup x=1$, 
or equivalently, $x^*\Cap x=0$.  
\end{corollary}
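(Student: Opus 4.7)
The plan is to assemble this corollary directly from the machinery already built up, and then verify the equational nature of the identifying condition so that the word \emph{subvariety} is justified.

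First, I would invoke Theorem~\ref{oqbe-90-10}, which shows that every orthomodular lattice $(X,\wedge,\vee,{}',0,1)$ carries a natural quantum-Wajsberg algebra structure via $x\ra y=x'\vee y$; this gives the inclusion of orthomodular lattices into the variety of QW algebras at the level of objects. To promote this to a subvariety inclusion, I would point out that the defining identity $x=x^*\ra x$ is a genuine equation in the QW signature $(\ra,{}^*,1)$, so the class of QW algebras satisfying it is, by definition, a subvariety. Concretely, by Corollary~\ref{oqbe-100} a QW algebra $X$ equals its OML-center $\mathcal{O}(X)$ exactly when every element satisfies $x=x^*\ra x$, and in that case Theorem~\ref{oqbe-90} turns $X$ itself into an orthomodular lattice under $\Cap_L,\Cup_L,{}^*$.

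Next, I would handle the equivalence of the three stated forms of the defining condition, which is essentially a citation of Proposition~\ref{oqbe-40}: for any $x\in X$, the equalities $x=x^*\ra x$, $x^*\Cup x=1$, and $x^*\Cap x=0$ are pairwise equivalent (the first two by a direct $\Cup$-unfolding and Proposition~\ref{qbe-60}(2) together with Proposition~\ref{qbe-80}(4), and the third from the second via involutivity, since $x^*\Cap x=(x\Cup x^*)^*$). Therefore the three equations cut out the same subvariety.

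Finally, to close the loop, I would check that the QW structure recovered from an orthomodular lattice via Theorem~\ref{oqbe-90-10} really does satisfy $x=x^*\ra x$: indeed $x^*\ra x=(x')'\vee x=x\vee x=x$, so every orthomodular lattice sits inside the proposed subvariety, and conversely every QW algebra in this subvariety is orthomodular by Corollary~\ref{oqbe-100}. There is no real obstacle here; the only thing to be careful about is stating clearly that the condition is equational (hence defines a subvariety rather than merely a subclass), and that the translations between $\oplus/\vee$ on the OML side and $\ra/{}^*$ on the QW side used in Theorem~\ref{oqbe-90-10} are compatible with the identity $x=x^*\ra x$, which is the step I would double-check explicitly before declaring the proof complete.
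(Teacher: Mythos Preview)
Your proposal is correct and follows essentially the same route as the paper: cite Proposition~\ref{oqbe-40} for the equivalence of the three conditions, verify $x^*\ra x=x\vee x=x$ in any orthomodular lattice under the translation of Theorem~\ref{oqbe-90-10}, and use $\mathcal{O}(X)=X$ (Corollary~\ref{oqbe-100}) for the converse. The only addition you make is the explicit remark that the identity $x=x^*\ra x$ is a genuine equation in the QW signature, hence cuts out a subvariety; the paper leaves this implicit.
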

\begin{proof}
The equivalence of conditions $x=x^*\ra x$, $x^*\Cup x=1$ and $x^*\Cap x=0$ follows from Proposition \ref{oqbe-40}. 
If a quantum-Wajsberg algebra $(X,\ra,0,1)$ is an orthomodular lattice with $x\vee y=x^*\ra y$, than 
$x^*\ra x=x\vee x=x$. 
Conversely, if $X$ satisfies condition $x^*\ra x=x$ for any $x\in X$, then $\mathcal{O}(X)=X$, hence $X$ is an 
orthomodular lattice. 
\end{proof}

\begin{example} \label{oqbe-120} 
Let $X=\{0,a,b,c,d,1\}$ and let $(X,\ra,0,1)$ be the involutive BE algebra with $\ra$ and the corresponding 
operation $\Cap$ given in the following tables:  
\[
\begin{array}{c|ccccccc}
\ra & 0 & a & b & c & d & 1 \\ \hline
0   & 1 & 1 & 1 & 1 & 1 & 1 \\ 
a   & c & 1 & 1 & c & 1 & 1 \\ 
b   & d & 1 & 1 & 1 & d & 1 \\ 
c   & a & a & 1 & 1 & 1 & 1 \\
d   & b & 1 & b & 1 & 1 & 1 \\
1   & 0 & a & b & c & d & 1
\end{array}
\hspace{10mm}
\begin{array}{c|ccccccc}
\Cap & 0 & a & b & c & d & 1 \\ \hline
0    & 0 & 0 & 0 & 0 & 0 & 0 \\ 
a    & 0 & a & b & 0 & d & a \\ 
b    & 0 & a & b & c & 0 & b \\ 
c    & 0 & 0 & b & c & d & c \\
d    & 0 & a & 0 & c & d & d \\
1    & 0 & a & b & c & d & 1
\end{array}
.
\]
Then $X$ is a quantum-Wajsberg algebra and $\mathcal{Z}(X)=\{0,1\}$, $\mathcal{O}(X)=X$. 
Therefore $(X,\Cap_L,\Cup_L,{}^*,0,1)$ is an orthomodular lattice with $\Cup_L$ and $\Cap_L$ given below. 
\[
\begin{array}{c|ccccccc}
\Cup_L & 0 & a & b & c & d & 1 \\ \hline
0      & 0 & a & b & c & d & 1 \\ 
a      & a & a & 1 & 1 & 1 & 1 \\ 
b      & b & 1 & b & 1 & 1 & 1 \\ 
c      & c & 1 & 1 & c & 1 & 1 \\
d      & d & 1 & 1 & 1 & d & 1 \\
1      & 1 & 1 & 1 & 1 & 1 & 1
\end{array}
\hspace{10mm}
\begin{array}{c|ccccccc}
\Cap_L & 0 & a & b & c & d & 1 \\ \hline
0      & 0 & 0 & 0 & 0 & 0 & 0 \\ 
a      & 0 & a & 0 & 0 & 0 & a \\ 
b      & 0 & 0 & b & 0 & 0 & b \\ 
c      & 0 & 0 & 0 & c & 0 & c \\
d      & 0 & 0 & 0 & 0 & d & d \\
1      & 0 & a & b & c & d & 1
\end{array}
.
\]
As we can see in this example, in general, $\Cup_L\neq \Cup$ and $\Cap_L\neq \Cap$. 
\end{example} 

\begin{remark} \label{oqbe-130} In general, the lattice $(\mathcal{O}(X),\Cap_L,\Cup_L,0,1)$ is not distributive. 
Indeed, in Example \ref{oqbe-120} we have $a\Cup_L (b\Cap_L c)=a\neq 1=(a\Cup_L b)\Cap_L (a\Cup_L c)$.  
\end{remark}


$\vspace*{5mm}$

\section{Concluding remarks and future work}

In this paper, we continued the study of quantum-Wajsberg algebras (\cite{Ciu78}). 
We defined the  Wajsberg-center and the OML-center of a quantum-Wajsberg algebra $(X,\ra,^*,1)$, 
proving that the Wajsberg-center is a Wajsberg subalgebra of $X$, and that it is a distributive sublattice of the 
poset $(X,\le_Q,0,1)$ (where $0=1^*$). 
We introduced the notion of quasi-linear quantum-Wajsberg algebras, and we proved that the Wajsberg-center of a quasi-linear quantum-Wajsberg algebra is a linearly ordered Wajsberg algebra.
We also proved that the OML-center is an orthomodular lattice, and that the orthomodular lattices form a subvariety 
of the variety of quantum-Wajsberg algebras. \\ 
There are several ways this work can be continued, as follows: \\
$\hspace*{0.5cm}$ $-$ Introduce and study certain generalizations of quantum-Wajsberg algebras, 
such as implicative-orthomodular, pre-Wajsberg and meta-Wajsberg algebras. \\
$\hspace*{0.5cm}$ $-$ Define the implicative-orthomodular lattices as a special subclass of quantum-Wajsberg algebras, and study their properties. \\
$\hspace*{0.5cm}$ $-$ Prove an analogue of Foulis-Holland theorem for implicative-orthomodular lattices. \\
$\hspace*{0.5cm}$ $-$ Study the Baer $^*$-semigroup associated to an implicative-orthomodular lattice $X$ and its relationship with the Sasaki projections defined on $X$. \\
$\hspace*{0.5cm}$ $-$ Investigate the central lifting property for implicative-orthomodular lattices. \\
Another direction of research could be the solving of the following open problem. \\ 
{\bf Open problem.} Is the variety of quasi-linear quantum-Wajsberg algebras axiomatizable (in the sense 
of \cite{Giunt3})?

$\vspace*{1mm}$

\section* {\bf\leftline {Declaration of interest}}

\noindent The author has no relevant financial or non-financial interests to disclose. \\

$\vspace*{1mm}$
          
\begin{center}
\sc Acknowledgement 
\end{center}
The author is very grateful to the anonymous referees for their useful remarks and suggestions on the subject that helped improving the presentation.

$\vspace*{1mm}$

\end{document}